%% file: main.tex
\documentclass[11pt]{article}

\input{preamble.tex}

\input{macros.tex}

\usepackage{authblk}

\title{When Rates Are Geometric: \\Rate-Certificate Transfer for Contact Splittings in Optimization}
\author[1]{George A. Kevrekidis}
\date{July 2026}

\affil[1]{Department of Applied Mathematics and Statistics, Johns Hopkins University, Baltimore, MD 21218, USA}

\begin{document}

\maketitle

\begin{abstract}
Discrete optimization algorithms are often analyzed through continuous-time limiting ODEs, but a convergence certificate for the limiting ODE is not automatically a certificate for the discrete algorithm. This paper develops contact Hamiltonian systems as a setting in which the transfer can be made precise. A contact Hamiltonian $H$ on $J^1(\R^n)$ obeys the intrinsic decay identity $\dot H = -H\,\partial_s H$, so an augmented energy $\mcE$, defined as a functional of $H$, together with the conformal rate $\partial_s H$, is a continuous-time rate certificate whenever $\mcE$ controls the objective gap. Our main theorem states, under three named and independently checkable hypotheses, that an order-$r$ contact splitting with step size $h$ transfers this certificate over the finite horizon controlled by backward error analysis. The discrete decay envelope is governed by the modified conformal factor up to $O(h^r)$ perturbations plus a backward-error shadowing defect, and the decay mechanism is inherited exactly because the modified Hamiltonian is itself a contact Hamiltonian. We use quadratic heavy ball as a fully solvable analytical example of this contact-certificate mechanism. Its projected dissipative-leapfrog spectrum agrees with established conformal-symplectic optimization theory, while the augmented contact Hamiltonian yields a sharp objective-to-certificate comparison that verifies the transfer hypotheses. For strongly convex objectives with state-dependent damping, an explicit Bregman-type Lyapunov certificate instead transfers by a separate auxiliary-shadowing corollary. The decomposition $H=K+V+D$ into a kinetic term, a potential that encodes the objective, and a dissipation term then serves as a design template with a catalogue of closed-form kinetic and dissipation sub-flows, including contact-specific damping families. Numerical experiments confirm the predicted conformal-factor tracking orders and show competitive performance on ill-conditioned benchmarks and deep-learning tasks.
\end{abstract}

\section{Introduction}
\label{sec:intro}
Accelerated first-order methods are a central object in optimization, from Polyak's heavy-ball method to Nesterov's accelerated gradient method~\cite{polyak1964heavyball,nesterov1983method}. A major line of work explains these algorithms through continuous-time dynamical systems: Nesterov acceleration admits a limiting ODE with a transparent Lyapunov certificate~\cite{su2016differential}, while variational, Bregman, mirror-descent, and inertial formulations reveal broader families of accelerated flows~\cite{wibisono2016variational,krichene2015accelerated,attouch2018fast,wilson2021lyapunov,muehlebach2019dynamical}. These perspectives have clarified the origin of acceleration and supplied powerful design principles for optimization algorithms.

At the same time, a convergence certificate for a limiting ODE is not automatically a convergence certificate for the discrete algorithm. A discretization can perturb the effective damping, modify the Lyapunov functional, or expose higher-order terms that are invisible in the formal continuous-time limit~\cite{shi2022understanding}. Thus the relevant question is not only which continuous dynamics have desirable rates, but which discrete maps preserve the geometric or Lyapunov structure responsible for those rates.

This paper shows that \emph{contact Hamiltonians provide a continuous-time rate certificate, and contact splittings transfer that certificate to discrete time over the finite horizon controlled by backward error analysis.} The certificate is the pair $(\mcE,\partial_s H)$: $\mcE$ is an augmented Lyapunov quantity controlling the objective gap (formalized below as a shifted Hamiltonian), while the conformal factor $\partial_s H$ supplies the decay rate through the intrinsic contact identity $\dot H=-H\,\partial_s H$. Our algorithmic constructions are obtained by choosing a kinetic term $K$, the potential $V=f$ given by the objective $f:\R^n\to\R$, and a dissipation term $D$, and then discretizing the resulting autonomous contact system with a structure-preserving splitting whose sub-flows are exact contactomorphisms in closed form. The same scalar quantity $\partial_s H$ thus determines the decay envelope in continuous time and, under contact splitting, its modified version governs the discrete rate transfer. The theory is developed for \emph{autonomous} contact Hamiltonians $H(x,p,s)$. Classical time-dependent damping laws, such as the Nesterov ODE with damping proportional to $1/t$, enter the framework through an autonomous lift discussed in \cref{sec:recovery}.

\subsection{Related Work}
Continuous-time approaches to optimization have produced a detailed dictionary between accelerated algorithms, dissipative ODEs, and Lyapunov functionals~\cite{su2016differential,wibisono2016variational,wilson2021lyapunov,muehlebach2019dynamical,jordan2018dynamical}, with control-theoretic dissipativity arguments providing a complementary route to the same rate certificates~\cite{hu2017dissipativity}. This dictionary is especially useful for design, but its discrete consequences depend on the integrator, since preserving the rate certificate requires more than matching the vector field to first order. This motivates structure-preserving discretizations, where backward error analysis transports continuous invariants or modified invariants to the numerical map~\cite{hairer2006geometric,mclachlan2002splitting}.

Symplectic and variational discretizations of accelerated flows have been advocated as a route to rate-preserving algorithms~\cite{betancourt2018symplectic,duruisseaux2021adaptive}, and dissipative Hamiltonian dynamics driven by stochastic gradients has a parallel history on the sampling side~\cite{chen2014sghmc}. Standard conformal-symplectic optimization treats phase-space-independent contraction rates, while presymplectic and symplectified approaches obtain backward-error tools through an enlarged conservative system~\cite{francca2021dissipative,francca2020conformal}. Nonseparable Hamiltonians may additionally require devices such as phase-space doubling. These constructions provide important precedents for rate transfer, but do not intrinsically represent state-dependent conformal factors on the original phase space. Fran\c{c}a et al.\ analyze the dissipative leapfrog on quadratic objectives, including its exact conformal contraction and stability properties. The projected $(x,p)$ map in our quadratic example is an adjoint ordering of that integrator and has the same characteristic polynomial. We revisit it to demonstrate how the augmented contact Hamiltonian supplies the certificate required by our transfer theorem. Because ordinary symplectic systems are conservative, dissipation must be introduced through time dependence, conformal symplecticity, lifting, or a contact formulation~\cite{maddison2018hamiltonian_descent}.

Contact Hamiltonian systems provide an intrinsic odd-dimensional geometry for dissipation, contact splitting and variational integrators have already been studied in the geometric integration literature~\cite{vermeeren2019contact,bravetti2020numerical}, and contact-geometric descriptions have found use in adjacent statistical settings~\cite{goto2019contact_MC}. In optimization, contact transformations have been used to reinterpret Bregman dynamics and accelerated flows~\cite{bravetti2023bregman}, with a focus on making the hard-to-integrate Bregman Hamiltonian separable via a contact change of variables. We instead use the \textit{contact decay identity itself as a rate certificate}, show that contact backward error analysis transfers this certificate through the modified conformal factor of an order-$r$ contact splitting, and use the resulting $H = K + V + D$ decomposition, defined in \cref{sec:master}, as a design template for a tractable subclass of contact optimizers, not as a claim that all useful contact Hamiltonians are naturally separable in these variables. The splitting infrastructure we rely on (exact-contact sub-flows generated by strict contactomorphisms and prolonged diffeomorphisms, their contact Baker--Campbell--Hausdorff calculus, and the local universality of the Lie algebra they generate) is developed in~\cite{kevrekidis2026local}. The present paper specializes that machinery to optimization and adds the certificate interpretation.

\subsection{Contributions and Structure}
We begin with the contact-Hamiltonian formalism and the closed-form sub-flow catalogue (\cref{sec:contact_opti}), state the certificate-transfer theory under named assumptions (\cref{sec:main}), analyze representative optimizer families in closed form (\cref{sec:closed_form}), and report numerical experiments (\cref{sec:computation}).

The main contributions of this paper are:
\begin{itemize}
    \item \textbf{A conformal-rate certificate and its discrete transfer.} We identify the pair $(\mcE,\partial_s H)$ induced by a contact Hamiltonian as a continuous-time rate certificate, and prove a conditional transfer theorem (\cref{thm:rate}): under three named, independently checkable hypotheses (\cref{ass:reg,ass:cert,ass:bea}), an order-$r$ contact splitting with step size $h$ transfers the decay envelope up to $O(h^r)$ perturbations of the conformal factor plus a backward-error shadowing defect.
    \item \textbf{Closed-form certificate examples.} We use quadratic heavy ball, whose dissipative-leapfrog spectrum was analyzed in the conformal-symplectic setting~\cite{francca2020conformal,francca2021dissipative}, as a fully solvable test of the augmented contact certificate. The contact certificate recovers the known exact spectral contraction of the projected Strang map from a different starting point, together with a sharp closed-form constant comparing the objective gap to the certificate (\cref{prop:quadratic}). The contact formulation is the more general of the two, since it is not restricted to constant damping, so agreeing exactly on this example is the consistency check one wants. For strongly convex objectives with state-dependent damping we give an explicit Bregman-type Lyapunov certificate that transfers by auxiliary shadowing (\cref{lem:bregman_lyap,cor:aux_transfer}), and the cumulative discrete conformal factor of the stated master splitting is an \emph{exact} quadrature of the damping along the numerical trajectory (\cref{prop:exact_conformal}).
    \item \textbf{A design template.} The decomposition $H = K + V + D$, with a catalogue of closed-form exact-contact sub-flows, recovers standard dissipative models and introduces contact-specific state-dependent, nonlinear, and momentum-coupled damping families. Experiments confirm the predicted second- and fourth-order conformal-factor tracking and competitive performance on ill-conditioned benchmarks and deep-learning tasks.
\end{itemize}

\section{Contact-Hamiltonian Optimization}
\label{sec:contact_opti}
We provide a minimal introduction to the contact-Hamiltonian formalism before discussing its application to optimization. Throughout this paper, we work on the extended phase space $J^1(\R^n)\simeq T^*\R^n \times \mathbb{R}$, with $T^*\R^n$ the cotangent bundle of $\R^n$, in coordinates $(x,p,s)$, where $x\in\R^n$ are the model parameters we wish to optimize, $p\in\R^n$ are the associated momenta, and $s\in\R$ is an auxiliary variable that tracks dissipation or acceleration in the standard contact-Hamiltonian description of non-conservative dynamics \cite{de2019contact,bravetti2023bregman}. We use the canonical contact form $\alpha = ds - p^T dx$. Sign conventions for contact Hamiltonian systems vary in the literature, and all rate statements below are tied to this choice (see \cref{app:contact_dynamics}).

\subsection{Contact Geometry}
\paragraph{Contact Flows.} A contact Hamiltonian $H: T^*\R^n \times \mathbb{R} \to \mathbb{R}$ is a smooth function that generates dynamics on the extended phase space via the associated vector field $X_H$:
\begin{equation}\label{eq:contact_eom}
    \dot{x} = \nabla_p H\qc \dot{p} = -\nabla_x H - p \frac{\partial H}{\partial s}\qc \dot{s} = p^T \nabla_p H - H.
\end{equation}
The associated flow $\Phi_H^t$ is a one-parameter family of contact-diffeomorphisms that maps initial conditions to their time-$t$ evolution under $X_H$. It is easy to verify that if $H$ is independent of $s$, then one recovers the familiar symplectic Hamiltonian dynamics for $(x,p)$, while $s$ evolves according to $\dot{s} = p^T \nabla_p H - H$. Thus, contact Hamiltonian systems can be viewed as a generalization of symplectic Hamiltonian systems that allow for \textit{non-conservative dynamics}, which we will leverage for optimization. If $H$ depends on $s$, the contact Hamiltonian is not conserved along the flow, but instead evolves according to the following identity:
\begin{equation}\label{eq:Hdot}
    \frac{\dx}{\dx t} H = -H \frac{\partial H}{\partial s}.
\end{equation}
Integrating~\eqref{eq:Hdot} along the flow $\Phi_H^t$ from an initial point $z=(x,p,s)\in J^1(\R^n)$ gives the continuous decay envelope
\begin{equation}\label{eq:H_envelope}
    H\bigl(\Phi_H^t z\bigr)
    = H(z)\exp(-\int_0^t
    \frac{\partial H}{\partial s}\bigl(\Phi_H^\tau z\bigr)\,d\tau),
\end{equation}
so $\partial_s H$ acts as an instantaneous geometric decay rate. We record this identity here as the basic contact analogue of energy conservation. Its rate interpretation is the subject of \cref{sec:main}.

\paragraph{Conformal Factor.} A discrete contact map $\psi: T^*\R^n \times \mathbb{R} \to T^*\R^n \times \mathbb{R}$ is a smooth map that preserves the contact structure, i.e. $\psi^* \alpha = e^{\lambda} \alpha$ for some scalar function $\lambda(x,p,s)$, where $\alpha = ds - p^T dx$ is the canonical contact form on $J^1(\R^n)$. We refer to $e^{\lambda}$ as the conformal factor associated with $\psi$. If the map $\psi$ is generated by a contact Hamiltonian $H$ via $\psi = \Phi_H^h$ for some step size $h>0$, then we have the following discrete analogue of the continuous-time identity for the evolution of $H$:
\begin{equation}
    H \circ \psi = e^{-\int_0^h \frac{\partial H}{\partial s}(\Phi_H^t) dt} H.
\end{equation}

\paragraph{Discretization.} The main analytical tool we use to transfer results from the continuous-time setting to the discrete-time setting is backward error analysis (BEA)~\cite{hairer2006geometric} applied to splitting integrators for a particular choice of contact Hamiltonian. Given a contact Hamiltonian $H$, an integrator is a discrete contact map $\psi$ that approximates the flow $\Phi_H^h$ for some step size $h>0$, i.e.
\begin{equation}
    \psi = \Phi_H^h + \mathcal{O}(h^{r+1}),
\end{equation}
where $r$ is the order of the integrator and equality is taken in a $C^k(U)$ topology for some $k\geq 0$ and over a compact subset $U\subset J^1(\R^n)$. If a Hamiltonian is expressed as a sum of simpler Hamiltonians, $H = \sum_{i=1}^m H_i$, a splitting integrator is a composition of the flows generated by the simpler Hamiltonians, e.g. $\psi = \Phi_{H_m}^{a_m h} \circ \cdots \circ \Phi_{H_1}^{a_1 h}$ for some coefficients $a_i$, with more general compositions revisiting the same sub-flow over several sweeps as in~\eqref{eq:strang}. Any such composition of exact contact sub-flows is a contact map, and for sufficiently small $h$ and appropriate coefficients it approximates $\Phi_H^h$ to order $r$.

\begin{exmp}\label{ex:strang}
    Let $H(x,p,s)=K(p) + V(x) + D(x,p,s)$ be a contact Hamiltonian, where $K$ is a kinetic energy term, $V$ is a potential energy term, and $D$ is a dissipation term. An $r=2$ splitting integrator with step $h$ for $H$ is given by the Strang splitting:
    \begin{equation}\label{eq:strang}
                \psi = \Phi_{D}^{\frac{h}{2}} \circ \Phi_{V}^{\frac{h}{2}}\circ \Phi_{K}^h \circ \Phi_{V}^{\frac{h}{2}} \circ \Phi_{D}^{\frac{h}{2}}.
    \end{equation}
    Higher-order integrators can be constructed by classical Yoshida--Suzuki composition methods \cite{yoshida1990construction,suzuki1990fractal}: the composition coefficients are chosen to cancel the leading-order error terms in the Baker--Campbell--Hausdorff expansion of $\psi$ around $\Phi_H^h$, and at higher integrator order some coefficients are necessarily negative, as in classical geometric composition methods.
\end{exmp}

\paragraph{Backward-error input.} The technical tool that transfers continuous statements to the discrete map is the contact analogue of shadow-Hamiltonian BEA: for an order-$r$ contact splitting on a compact set, sufficiently small $h$, and a chosen BEA truncation index $q$, there is a modified contact Hamiltonian $\tH_{h,q} = H + O(h^r)$ whose flow shadows the numerical iteration over the finite horizon up to a defect that is algebraic in $h$ at finite smoothness and exponentially small in $1/h$ under analyticity. The modified conformal factor, denoted $\tlam_{h,q}$, satisfies
\begin{equation}\label{eq:bea_conformal_local}
    \tlam_{h,q} := \partial_s \tH_{h,q} = \partial_s H + O(h^r),
\end{equation}
uniformly on the compact region. This input is packaged as \cref{ass:bea} below and verified for the splittings of this paper in \cref{app:bch}. The underlying contact Lie-algebra calculus is developed in~\cite{kevrekidis2026local}.

\subsection{Hamiltonians For Optimization}
\label{sec:master}
For optimization of a smooth objective $f:\R^n \to \R$, we will use as our main design template master contact Hamiltonians of the form
\begin{equation}\label{eq:master}
    \boxed{H(x,p,s) = K(x,p) + V(x) + D(x,p,s)}
\end{equation}
with potential $V(x)=f(x)$ determined by the objective. The choice of kinetic $K$ and dissipation $D$ terms then parametrizes the optimizer within this tractable subclass. This is a practical API rather than an exhaustive normal form. Interesting contact Hamiltonians used in optimization, such as the Bregman Hamiltonian, need not be naturally separable in this way and may require a contact change of variables or more general splitting machinery~\cite{bravetti2023bregman,kevrekidis2026local}. In the explicit optimizer families considered in this paper, the dissipative term is affine in the contact variable, $D(x,p,s)=d(x,p)s$, so the conformal rate is $\partial_s H=\partial_s D=d(x,p)$. The associated contact Hamiltonian equations have the form
\begin{equation}\label{eq:master_eom}
    \dot{x} = \nabla_p K + \nabla_p D\qc \dot{p} = -\nabla_x V - \nabla_x D - p \frac{\partial H}{\partial s}\qc \dot{s} = p^T(\nabla_p K + \nabla_p D) - H,
\end{equation}
with $\partial_s D$ determining the conformal factor of the flow. For Euclidean kinetic energy and scalar dissipation $D=D(s)$, projection to $\R^n$ gives the second-order ODE $\ddot x + (\partial_s D)\,\dot x + \nabla f(x) = 0$, recovering the familiar damped-system-with-potential-$f$ interpretation used in optimization dynamics \cite{polyak1964heavyball,su2016differential,wibisono2016variational}. If instead $D=\beta(x)s$, the projected momentum equation contains the additional contact force $-s\nabla\beta(x)$, as made explicit below. We define a contact algorithm as follows:

\begin{defn}[Contact Algorithm] A contact algorithm is a discrete-time optimization algorithm obtained by (a) determining a continuous-time contact Hamiltonian $H$ of the form~\eqref{eq:master} with potential $V=f$, and (b) discretizing the flow $\Phi_H^t$ using a contact splitting integrator $\Psi_h$ for some step size $h>0$. The resulting discrete-time algorithm is given by the iteration $z_{n+1} = \Psi_h z_n$, where $z_n = (x_n, p_n, s_n)$ is the state of the algorithm at iteration $n$.
\end{defn}

\subsection{Closed-form sub-flow catalogue}
\label{sec:subflows}

Splitting integrators are most attractive when the sub-flows can be solved in closed form. The master template~\eqref{eq:master} admits three structural strata, distinguished by which of the two exact-contact generator classes of~\cite{kevrekidis2026local} they belong to:
\begin{itemize}
    \item \textbf{Strict:} Hamiltonians independent of $s$. They generate contact flows that project to symplectic flows on $T^*\R^n$ and preserve $\alpha$ exactly (conformal factor $1$). Kinetic and potential terms live here.
    \item \textbf{Prolonged:} Hamiltonians affine in $p$, i.e.\ $a(x,s) + b^i(x,s)p_i$. Their flows are prolongations of dynamics on the base space $(x,s)$, meaning that the base motion determines the motion of $p$ uniquely through the chain rule that maintains $p = \nabla_x s$ along $1$-jets. Classical scalar dissipations live here.
    \item \textbf{Polynomial-in-$p$ with $s$-dependence:} Hamiltonians outside both classes. These are covered by the Lie-density theorem of~\cite{kevrekidis2026local} via commutator gadgets, but certain explicit families nonetheless admit closed-form sub-flows, as item (iv) below shows.
\end{itemize}
Each core atom below is an exact contactomorphism in closed form when its coefficients are fixed during the step. An $x$-dependent preconditioned kinetic Hamiltonian also generates an autonomous strict contact flow, but that geodesic flow is generally not available in closed form. The explicit preconditioned shear displayed below therefore requires the preconditioner to be constant during the sub-step. The implemented variant that re-evaluates the metric at the incoming point and applies this shear, Contact-Newton, is only approximately contact; see the caveat closing item (i). With that caveat, these atoms generate all algorithms used in this paper.

\paragraph{(i) Strict kinetic atoms.}
The Euclidean kinetic term $K(p) = \tfrac12\|p\|^2$ generates the shear
\begin{equation}\label{eq:K_eucl}
    \Phi_K^\tau \colon \quad
    x \mapsto x + \tau p,\quad
    p \mapsto p,\quad
    s \mapsto s + \tfrac\tau2\|p\|^2.
\end{equation}
The \emph{preconditioned kinetic} term $K(x,p) = \tfrac12 p^\top M(x)^{-1} p$ is independent of $s$, so its exact autonomous flow is strict contact and projects to the symplectic geodesic Hamiltonian flow
\begin{equation}\label{eq:K_metric_geodesic}
    \dot x = M(x)^{-1}p,\qquad
    \dot p_j = -\tfrac12 p^\top \partial_{x_j}\!\bigl(M(x)^{-1}\bigr)p,\qquad
    \dot s = \tfrac12 p^\top M(x)^{-1}p.
\end{equation}
This flow is generally implicit rather than available in closed form. Freezing a constant external metric $M_0$ during the sub-step reduces it to the explicit exact strict-contact shear
\begin{equation}\label{eq:K_metric}
    \Phi_K^\tau \colon \quad
    x \mapsto x + \tau\, M_0^{-1} p,\quad
    p \mapsto p,\quad
    s \mapsto s + \tfrac\tau2\,p^\top M_0^{-1} p.
\end{equation}
Instantiating $M(x)$ gives a family of preconditioned methods that spans the usual metric choices. Taking $M = I$ recovers~\eqref{eq:K_eucl}, $M(x) = \nabla^2 f(x)$ gives the second-order \emph{Contact-Newton}, $M$ from the L-BFGS two-loop recursion gives the quasi-Newton \emph{Contact-L-BFGS}, and $M = \mathrm{diag}\bigl(\sqrt{\hat v_t} + \varepsilon\bigr)$ with $\hat v_t$ an EMA of squared gradients is the Adam-style diagonal preconditioner.

How these rows are interpreted depends on how the preconditioner is updated. When it is an \emph{external state} updated between steps (the Adam EMA buffer $\hat v_t$, the L-BFGS pair history), $M_0$ is a constant parameter during the step, and the map above is an exact strict contactomorphism at every step, although a different one at each step. The resulting sequence of maps is nonautonomous, so the exact conformal bookkeeping of \cref{prop:exact_conformal} still applies but the single-modified-Hamiltonian backward error analysis of \cref{ass:bea} does not apply verbatim. When instead $M=M(x)$ is re-evaluated at the incoming point, as in the implemented Contact-Newton variant with $M=\nabla^2 f(x)$, applying the same frozen shear is \emph{not} an exact contact map. Its pullback acquires an $O(\tau\,\|\partial_x M\|)$ defect, computed in \cref{app:catalogue}. This is a defect of the incoming-point shear, not an obstruction to contact integration of~\eqref{eq:K_metric_geodesic}: the latter may instead be solved exactly when its geodesic flow is known, or approximated by an implicit symplectic method with its strict contact lift. Such alternatives preserve the autonomous geometric formulation but lie outside the closed-form splitting catalogue and the exact-sub-flow BCH verification used here.

\paragraph{(ii) Potential atom.}
The potential $V(x) = f(x)$ generates the momentum kick
\begin{equation}\label{eq:V_flow}
    \Phi_V^\tau \colon \quad
    x \mapsto x,\quad
    p \mapsto p - \tau\nabla f(x),\quad
    s \mapsto s - \tau f(x).
\end{equation}

\paragraph{(iii) Scalar damping atoms.}
Constant damping $D = \gamma s$ has conformal rate $\partial_s D = \gamma$ and generates the pure rescaling
\begin{equation}\label{eq:D_constant}
    \Phi_D^\tau \colon \quad
    x \mapsto x,\quad
    p \mapsto e^{-\gamma\tau} p,\quad
    s \mapsto e^{-\gamma\tau} s.
\end{equation}
State-dependent damping $D = \beta(x)\,s$ has a sub-flow that keeps $x$ frozen, so $\beta(x)$ and $\nabla\beta(x)$ are evaluated at the incoming position,
\begin{equation}\label{eq:D_statedep}
    \Phi_D^\tau \colon \quad
    x \mapsto x,\quad
    p \mapsto e^{-\beta(x)\tau} \bigl(p - \tau s\,\nabla\beta(x)\bigr),\quad
    s \mapsto e^{-\beta(x)\tau} s.
\end{equation}
State-dependent damping therefore contributes not only the conformal rescaling of $(p,s)$ but also the contact correction $-\tau s\,\nabla\beta(x)$ in the momentum. Nonlinear action damping $D = (\gamma/2)s^2$ has conformal rate $\partial_s D = \gamma s$, and its sub-flow is solved via the conserved quantity $p\,s^{-2}$,
\begin{equation}\label{eq:D_nonlinear}
    \Phi_D^\tau \colon \quad
    x \mapsto x,\quad
    s \mapsto \frac{s}{1 + \tfrac\gamma2\, s\, \tau},\quad
    p \mapsto p \Bigl( 1 + \tfrac\gamma2\,s\,\tau \Bigr)^{-2}.
\end{equation}
The denominator in~\eqref{eq:D_nonlinear} vanishes at $\tau = 2/(\gamma|s|)$ when $s<0$, which is the finite-time blow-up of the scalar Riccati equation $\dot s = -\tfrac\gamma2 s^2$ governing the action variable along this sub-flow, so implementations clamp $\tau$ strictly below that bound.

\paragraph{(iv) Momentum-dependent damping.}
The atom $D_{p} = \alpha\|p\|^2 s$ is neither strict (it depends on $s$) nor prolonged (it is quadratic in $p$), yet its sub-flow is exact in closed form: with $w_0 = \|p_0\|^2$ and $c = 1 + 2\alpha w_0\tau$,
\begin{equation}\label{eq:D_psq}
    \Phi_{D_p}^\tau \colon \quad
    x \mapsto x + 2\alpha\,\tau\, s\, p,\quad
    p \mapsto p/\sqrt{c},\quad
    s \mapsto s\sqrt{c},
\end{equation}
derived from two conserved quantities of the sub-flow in \cref{app:catalogue}. For $\alpha > 0$, $\|p\|$ decreases and $|s|$ increases, so momentum is \emph{transferred into the action variable}, damping large momenta more than small ones. The atom enters the framework either as a closed-form brick treated as a single atom in the outer composition, or as part of a polynomial-in-$p$ surrogate covered by the Lie-density machinery of~\cite{kevrekidis2026local}.

\paragraph{Composition.} Given $H = K + V + D$ with the sub-flows above, the second-order Strang composition is~\eqref{eq:strang}, and the fourth-order Yoshida triple jump is
\begin{equation}\label{eq:yoshida}
    \Psi^{(4)}_h \;=\;
    \Psi^{(2)}_{\gamma_1 h} \circ \Psi^{(2)}_{\gamma_0 h} \circ
    \Psi^{(2)}_{\gamma_1 h},
    \qquad
    \gamma_1 = \frac{1}{2 - 2^{1/3}},\ \gamma_0 = 1 - 2\gamma_1.
\end{equation}
Here $\gamma_0<0$, so the middle stage runs each atom backward. Individual damping factors may therefore exceed one, while their signed durations still combine to the intended net conformal exponent. Each factor is an exact contactomorphism, so every composition is a contactomorphism by construction. Structure preservation is not approximate.

\subsection{Algorithm design recipe}
\label{sec:recipe}

The pipeline that organizes the rest of the paper can be summarized as follows.

\medskip
\noindent\fbox{\parbox{\dimexpr\linewidth-2\fboxsep-2\fboxrule}{%
\textbf{Contact algorithm design recipe.}
\begin{enumerate}
    \item[(i)] Choose the master Hamiltonian $H = K + f + D$ from the atoms of \cref{sec:subflows}.
    \item[(ii)] Identify the conformal rate $\partial_s H$ and the candidate certificate built from $H$, in the shifted form introduced in \cref{sec:main}.
    \item[(iii)] Discretize with a contact splitting built from the closed-form sub-flows.
    \item[(iv)] Verify (or assume) the certificate hypotheses stated in \cref{sec:main}, in particular the comparison bounding the objective gap by the certificate on a compact region, and invoke \cref{thm:rate} to transfer the continuous rate certificate to the discrete iterates over the backward-error horizon.
\end{enumerate}}}
\medskip

The geometry supplies the decay identity for free, but only the verified comparison of step (iv) turns Hamiltonian decay into objective decay. \Cref{sec:closed_form} carries out that verification in closed form for representative families.

\subsection{Recovery of classical algorithms}
\label{sec:recovery}

\begin{table}[t]
\centering
\renewcommand{\arraystretch}{1.4}
\begin{tabular}{l l l l}
\hline
$K(x,p)$ & $D(x,p,s)$ & $\partial_s H$ & Algorithm \\
\hline
$\tfrac12\|p\|^2$ & $\gamma s$ & $\gamma$ & Heavy ball~\cite{polyak1964heavyball} \\
$\tfrac12\|p\|^2$ & $2\sqrt\mu\,s$ & $2\sqrt\mu$ & NAG-type strongly convex ODE~\cite{nesterov1983method,wibisono2016variational} \\
$\tfrac12 p^\top\!\nabla^2\!f(x)^{-1}p$ & $\gamma s$ & $\gamma$ & Contact-Newton \\
$\tfrac12 p^\top\! B_\text{LBFGS}^{-1} p$ & $\gamma s$ & $\gamma$ & Contact-L-BFGS \\
$\tfrac12 p^\top\!\mathrm{diag}(\sqrt{\hat v}+\varepsilon)^{-1}p$
    & $\gamma s$ & $\gamma$ & Contact-Adam (C-Adam) \\
$\tfrac12\|p\|^2$ & $\beta(x)s$ & $\beta(x)$ & State-adaptive damping [new] \\
$\tfrac12\|p\|^2$ & $(\gamma/2)s^2$ & $\gamma s$ & Self-regulating damping [new] \\
$\tfrac12\|p\|^2$ & $\alpha\|p\|^2 s$ & $\alpha\|p\|^2$ & Momentum-transfer damping [new] \\
\hline
\end{tabular}
\caption{\textbf{Autonomous} contact-Hamiltonian templates. The potential is $V(x)=f(x)$ in all rows, and the conformal rate is determined by $\partial_s H$. Rows whose sub-flows are exact (or external-state frozen, see \cref{sec:subflows}) fall directly under \cref{thm:rate} whenever the certificate hypotheses of \cref{ass:cert} are verified. The variable-metric Contact-Newton Hamiltonian has an autonomous strict contact flow, but the implemented incoming-point frozen shear is only approximately contact and is therefore included as a framework-motivated variant.}
\label{tab:classical_algos}
\end{table}

\begin{table}[t]
\centering
\renewcommand{\arraystretch}{1.4}
\begin{tabular}{l l l l}
\hline
$K(x,p)$ & $D(x,p,s,t)$ & $\partial_s H$ & Algorithm \\
\hline
$\tfrac12\|p\|^2$ & $\tfrac{r}{t}s$ & $\tfrac{r}{t}$ & Nesterov ODE, $r\ge 3$~\cite{su2016differential} \\
$\tfrac12\|p\|^2$ & $\beta_k s$ (iteration-scheduled) & $\beta_k$ & C-NAG / scheduled contact-SGD (\cref{app:algorithms}) \\
\hline
\end{tabular}
\caption{\textbf{Time-dependent} damping models. These are nonautonomous and require the autonomous lift of \cref{prop:lift} to enter the geometric framework of \cref{sec:main}. Application of the rate-transfer theorem additionally requires verification of the lifted certificate hypotheses. We include them as dictionary entries connecting the contact viewpoint to classical acceleration.}
\label{tab:classical_algos_td}
\end{table}

\paragraph{Relation to Existing Methods.} \Cref{tab:classical_algos} distinguishes \emph{autonomous contact templates}, which fall directly under the theory of \cref{sec:main}, from the \emph{time-dependent accelerated models} of \cref{tab:classical_algos_td}, which require an autonomous lift before the same geometric analysis applies. Familiar continuous-time optimization models are recovered by fixing the kinetic term and choosing the contact dissipation $D$ so that $\partial_s H$ matches the desired damping law. The preconditioned kinetic rows remain strict, in the sense that $\partial_s K=0$, so the conformal rate is still set entirely by $D$. Preconditioning changes how efficiently kinetic energy is converted into progress on $f$, not the prescribed contact-form multiplier. The last three rows of \cref{tab:classical_algos} illustrate contact-specific possibilities: trajectory-dependent damping $\partial_s H=\beta(x)$, where the accumulated conformal factor depends on the region of the landscape visited by the flow; self-regulating damping $\partial_s H = \gamma s$, where the rate is set by the accumulated action; and momentum-transfer damping $\partial_s H = \alpha\|p\|^2$, which damps large momenta more than small ones. Their state-dependent multipliers are represented intrinsically on the same odd-dimensional state space by the contact formalism. Enlarged symplectic realizations may also be constructed, but they do not provide this same-space conformal description.

For the time-dependent rows, the lift is recorded as \cref{prop:lift} in \cref{app:lift}: augmenting the base with a time coordinate $\theta$ and its conjugate momentum $\pi$, the lifted Hamiltonian $\bar H = \tfrac12\|p\|^2 + f(x) + \pi + \beta(\theta)\,s$ on $J^1(\R^{n+1})$ is again of master form (two strict atoms plus one prolonged atom), reproduces the nonautonomous dynamics $\ddot x + \beta(t)\dot x + \nabla f(x)=0$, and has conformal rate $\partial_s\bar H = \beta(\theta)$. Nonautonomous damping laws therefore \emph{are} contact Hamiltonian, but on an enlarged phase space, and the free momentum $\pi$ means that positivity and objective comparison do not follow from the lift alone. For the Nesterov choice $\beta(t)=r/t$, all compactness hypotheses of \cref{sec:main} must be imposed away from the $t=0$ singularity, on windows $[t_0,t_0+T]$ with $t_0>0$. In the remainder of the paper, all certified statements refer to the autonomous setting unless the lift is invoked explicitly and its certificate hypotheses are checked.

\section{Certificate-Transfer Theory}\label{sec:main}
\subsection{From Hamiltonian decay to optimization convergence}
\label{sec:rates}

The identity~\eqref{eq:Hdot} states that the Hamiltonian decays geometrically at the pointwise rate $\partial_s H$. For the master Hamiltonian $H = K + V + D$, this gives an augmented certificate for optimization once the non-potential terms are controlled along the trajectory. Fix a constant reference value $H^\star$ (in the certified examples below, $H^\star = f^\star := \min_x f$) and set
\begin{equation}\label{eq:mcE_def}
    \mcE(z) \;:=\; H(z) \,-\, H^\star.
\end{equation}
The Hamiltonian certificate becomes an optimization certificate once the objective gap is controlled by the augmented energy. We will use the comparison condition
\begin{equation}
    \label{eq:objective_comparison}
    V(x)-f^\star \le C_{\mathrm{cmp}}\,\mcE(x,p,s)
\end{equation}
along the trajectories under consideration. A simple sufficient case is $K(x,p)+D(x,p,s)\ge 0$ and $H^\star=f^\star$, for which
\begin{equation*}
    V(x)-f^\star
    \le H(x,p,s)-f^\star
    = \mcE(x,p,s)
\end{equation*}
holds with $C_{\mathrm{cmp}}=1$. This pointwise positivity is convenient but stronger than necessary. The closed-form quadratic analysis of \cref{sec:quadratic} shows that $K+D$ can dip negative transiently while the comparison~\eqref{eq:objective_comparison} still holds with an explicit constant $C_{\mathrm{cmp}}>1$. More generally, an objective-level certificate may be supplied by a problem-specific Lyapunov construction that is not of the Hamiltonian form (\cref{lem:bregman_lyap}, \cref{app:lyapunov}). Such certificates transfer to the iterates by the shadowing route (\cref{cor:aux_transfer}) rather than through the conformal identity. The theorem below is stated conditionally on the Hamiltonian comparison~\eqref{eq:objective_comparison}. For the shifted Hamiltonian certificate, $H^\star\ge0$ and $\partial_sH\ge0$ imply directly that
\begin{equation}\label{eq:E_decay_assumption}
    \frac{\dx}{\dx t}\,\mcE\bigl(\Phi^t_H z_0\bigr)
    \le -\partial_s H\bigl(\Phi^t_H z_0\bigr)\,
    \mcE\bigl(\Phi^t_H z_0\bigr)
\end{equation}
on the trajectory, since $\dot\mcE=-\partial_sH\,(\mcE+H^\star)$. Gr\"onwall's inequality then gives
\begin{equation}\label{eq:E_decay}
    \mcE\bigl(\Phi^t_H z_0\bigr)
    \;\le\; \Rate_H(t)\,\mcE(z_0),
    \qquad
    \Rate_H(t) \;:=\; \exp\!\Big(\!-\!\int_0^t \partial_s H\bigl(\Phi^\tau_H z_0\bigr)\,d\tau\Big).
\end{equation}
Combining~\eqref{eq:E_decay} with~\eqref{eq:objective_comparison} gives the corresponding objective envelope $V(x_t)-f^\star \le C_{\mathrm{cmp}}\Rate_H(t)\mcE(z_0)$. Four canonical specializations make the dependence on the conformal factor explicit:
\begin{itemize}
    \item[(a)] \textbf{Heavy ball}, $D = \gamma s$:\; $\partial_s H = \gamma$ is constant, so $\Rate_H(t) = e^{-\gamma t}$.
    \item[(b)] \textbf{Strongly-convex regime}, $D = 2\sqrt\mu\,s$:\; $\partial_s H = 2\sqrt\mu$, so $\Rate_H(t) = e^{-2\sqrt\mu\,t}$.
    \item[(c)] \textbf{State-dependent damping}, $D = \beta(x)s$:\; $\partial_s H = \beta(x)$ is variable, and $\Rate_H(t) = \exp\!\big(-\!\int_0^t \beta(x(\tau))\,d\tau\big)$ tracks the trajectory-weighted accumulated damping. If $\beta(x(t))\ge 0$ along the trajectory, the envelope is monotone.
    \item[(d)] \textbf{Nesterov polynomial regime} (lifted, \cref{prop:lift}), $\beta(t) = r/t$:\; $\Rate_H(t) = (t_0/(t_0+t))^{r}$, the polynomial Hamiltonian envelope. The objective-level rate certified by the Su--Boyd--Cand\`es functional is $O(1/t^2)$ (\cref{app:lyapunov}), and the two statements are distinct and should not be conflated.
\end{itemize}
In each case the rate is the integral of the conformal factor along the flow. We will write the rate certificate of $H$ as the pair $(\mcE,\partial_s H)$, implicitly understood to mean the decay~\eqref{eq:E_decay} of $\mcE$ along $\Phi^t_H$.

The discussion above isolates the scope of the discrete theorem. The contact identity gives a rate certificate for the Hamiltonian itself, and objective-level rates follow only on regions where the comparison, nonnegativity, and regularity assumptions hold along the shadowed trajectory. We now state these assumptions once, as named hypotheses, and quote them in the theorem.

\subsection{Assumptions}
\label{sec:assumptions}

\begin{assumption}[Regularity and compactness]\label{ass:reg}
$U \subset J^1(\R^n)$ is compact; the Hamiltonian $H$ and the sub-flows defining the order-$r$ contact splitting $\Psi^{(r)}_h$ are of class $C^{q+2}$ on a neighborhood of $U$ for a BEA truncation order $q \ge r$; a finite horizon $T>0$ is fixed; and the continuous, modified, and numerical trajectories issued from the initial conditions under consideration remain in $U$ for $nh\le T$ and sufficiently small $h$. For state-dependent damping $D=\beta(x)s$ this in particular requires $\beta \in C^{q+2}$ on a neighborhood of $U$, and the Lipschitz constants of $\beta$ and $\nabla\beta$ on $U$ enter the estimates below. Optionally, $H$ and the sub-flows are real analytic with a holomorphic extension to a complex neighborhood of $U$.
\end{assumption}

\begin{assumption}[Positive certificate region]\label{ass:cert}
On $U$ and along the trajectories under consideration:
\begin{enumerate}
    \item[(i)] the reference value is the objective minimum, $H^\star = f^\star \ge 0$, and the certificate is nonnegative, $\mcE = H - H^\star \ge 0$ (nonnegativity of $f^\star$ is a normalization, available by replacing $f$ with $f - f^\star$; see \cref{rem:gauge} for the one case where this shift is not innocuous);
    \item[(ii)] the damping is nonnegative, $\partial_s H \ge 0$ on $U$ (so that $\Rate_H(t) \le 1$);
    \item[(iii)] the comparison condition~\eqref{eq:objective_comparison} holds with a finite constant $C_{\mathrm{cmp}}$;
\end{enumerate}
\end{assumption}

\begin{assumption}[Backward-error input]\label{ass:bea}
For sufficiently small $h$ there exists a modified contact Hamiltonian $\tH_{h,q}$ on a compact $U' \subseteq U$ containing the numerical trajectory such that:
\begin{enumerate}
    \item[(i)] $\tH_{h,q} = H + O(h^r)$ and $\partial_s \tH_{h,q} = \partial_s H + O(h^r)$, uniformly in $C^1(U')$, and hence $\mathrm{Lip}_{U'}(\tH_{h,q} - H) = O(h^r)$;
    \item[(ii)] the modified flow shadows the splitting, $\bigl\|(\Psi^{(r)}_h)^n - \Phi^{nh}_{\tH_{h,q}}\bigr\|_{C^0(U')} \le \delta_{\mathrm{BEA}}(h,T)$ for $nh\le T$, where the state-space defect satisfies $\delta_{\mathrm{BEA}}(h,T) = O_T(h^q)$ at finite smoothness and $\delta_{\mathrm{BEA}}(h,T)=O(e^{-c/h})$ in the analytic case;
    \item[(iii)] the certificate-level defect is defined from the state-space defect through a Lipschitz constant of the \emph{modified} certificate, $\rho_{\mathrm{BEA}}(h,T) := L_{\tE}\,\delta_{\mathrm{BEA}}(h,T)$ with $L_{\tE} := \mathrm{Lip}_{U'}(\tE_{h,q})$; by item \textup{(i)}, $L_{\tE} \le \mathrm{Lip}_{U'}(\mcE) + O(h^r)$, which is finite and $h$-uniformly bounded by \cref{ass:reg}.
\end{enumerate}
\Cref{app:bch} verifies this assumption for master splittings built from the closed-form sub-flows of \cref{sec:subflows} under \cref{ass:reg}, via the contact Baker--Campbell--Hausdorff expansion; see also~\cite{kevrekidis2026local,hairer2006geometric}.
\end{assumption}

Three comments on the hypotheses. First, \cref{ass:cert} carries the optimization-specific content, since the geometry does not supply nonnegativity or the objective comparison for free, and \cref{sec:closed_form} verifies them in closed form for concrete families. Second, the defect $\rho_{\mathrm{BEA}}$ in the theorem is a \emph{certificate} defect. \Cref{ass:bea}(iii) makes explicit the Lipschitz conversion from the state-space shadowing distance, which is often left implicit in BEA statements, and item (ii) of \cref{ass:cert}, nonnegative damping, licenses the additive form of the envelope perturbation in~\eqref{eq:envelope} below, with the sign-indefinite case discussed in \cref{rem:sign_indef}. Third, two useful consequences need not be imposed separately. Items (i)--(ii) of \cref{ass:cert} imply the continuous decay~\eqref{eq:E_decay_assumption}, while \cref{ass:bea}(i) implies $\sup_{U'}|\tE_{h,q}-\mcE|=O(h^r)$. The corresponding decay of the \emph{modified} certificate is likewise derived, rather than assumed, because the multiplicative contact identity~\eqref{eq:Hdot} holds exactly for $\tH_{h,q}$ along its own flow. We record this as a lemma.

\begin{lemma}[Inherited decay of the modified certificate]\label{lem:mod_decay}
Assume \cref{ass:reg}, items \textup{(i)}--\textup{(ii)} of \cref{ass:cert}, and item \textup{(i)} of \cref{ass:bea}. Then for sufficiently small $h$, the modified certificate $\tE_{h,q} = \tH_{h,q} - H^\star$ satisfies, along the modified flow in $U'$,
\begin{equation}\label{eq:inherited_decay}
    \frac{\dx}{\dx t}\,\tE_{h,q} \;\le\; -\,\partial_s\tH_{h,q}\;\tE_{h,q} \;+\; C\,h^r H^\star,
\end{equation}
for a constant $C$ independent of $h$, and consequently $\tE_{h,q}\bigl(\Phi^t_{\tH_{h,q}}z_0\bigr) \le \widetilde{\Rate}_{h,q}(t)\,\tE_{h,q}(z_0) + C_0\,h^r$ for all $t\le T$, with $\widetilde{\Rate}_{h,q}$ as in~\eqref{eq:mod_decay} and with $C_0$ independent of $h$; in the normalized case $H^\star = 0$, one has $C_0 = 0$.
\end{lemma}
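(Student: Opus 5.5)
The plan is to use the exact contact identity \eqref{eq:Hdot} for the modified Hamiltonian. Since $\tH_{h,q}$ is itself a contact Hamiltonian on $U'$, its flow satisfies $\frac{\dx}{\dx t}\tH_{h,q} = -\tH_{h,q}\,\partial_s\tH_{h,q}$ exactly. We then substitute $\tH_{h,q} = \tE_{h,q} + H^\star$, which gives
\begin{equation*}
\frac{\dx}{\dx t}\tE_{h,q} = -\partial_s\tH_{h,q}\,\tE_{h,q} - \partial_s\tH_{h,q}\,H^\star .
\end{equation*}
The whole task is to bound the inhomogeneous term $-\partial_s\tH_{h,q}\,H^\star$ from above by $C h^r H^\star$.

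For this we use \cref{ass:bea}(i), which gives $\partial_s\tH_{h,q} = \partial_s H + O(h^r)$ uniformly on $U'$. Combined with \cref{ass:cert}(ii), $\partial_s H\ge 0$ on $U\supseteq U'$, this yields $\partial_s\tH_{h,q} \ge -C h^r$ on $U'$, with $C$ the uniform constant from the $C^1$ estimate. Because $H^\star = f^\star \ge 0$ by \cref{ass:cert}(i), we get $-\partial_s\tH_{h,q} H^\star \le C h^r H^\star$, which is \eqref{eq:inherited_decay}. No sign of $\tE_{h,q}$ is needed for this step, since the $\tE_{h,q}$ term is kept exactly.

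The integrated bound follows from the linear variation-of-constants formula. Write $a(t) = \partial_s\tH_{h,q}(\Phi^t_{\tH_{h,q}} z_0)$ and $\widetilde{\Rate}_{h,q}(t) = \exp(-\int_0^t a)$. Multiplying the differential inequality by the integrating factor $1/\widetilde{\Rate}_{h,q}(t)$ and integrating gives
\begin{equation*}
\tE_{h,q}(t) \le \widetilde{\Rate}_{h,q}(t)\,\tE_{h,q}(z_0) + C h^r H^\star \int_0^t \frac{\widetilde{\Rate}_{h,q}(t)}{\widetilde{\Rate}_{h,q}(\tau)}\,d\tau .
\end{equation*}
The ratio equals $\exp(-\int_\tau^t a)$. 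Since $a\ge -Ch^r$, it is at most $e^{C h^r T}$, which is bounded by $2$ for $h$ small. Hence the remainder is at most $2CTH^\star h^r =: C_0 h^r$, with $C_0$ independent of $h$, and $C_0=0$ when $H^\star=0$. The trajectory stays in $U'$ for $t\le T$ by \cref{ass:reg}, so the uniform estimates apply along it.

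The main obstacle is only bookkeeping. The decay rate $\partial_s\tH_{h,q}$ may be slightly negative, so $\widetilde{\Rate}_{h,q}$ need not be at most $1$. The constant in the $O(h^r)$ term must therefore be uniform on the compact $U'$, and the horizon must be finite; both are supplied by \cref{ass:reg} and \cref{ass:bea}(i).
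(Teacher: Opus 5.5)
Your proposal is correct and follows the paper's proof essentially line for line: you apply the exact identity $\frac{\dx}{\dx t}\tH_{h,q} = -\tH_{h,q}\,\partial_s\tH_{h,q}$, split off $H^\star$, and bound $-\partial_s\tH_{h,q}H^\star \le Ch^rH^\star$ using $\partial_s\tH_{h,q}\ge -Ch^r$ and $H^\star\ge 0$. Your variation-of-constants computation, with the ratio $\widetilde{\Rate}_{h,q}(t)/\widetilde{\Rate}_{h,q}(\tau)\le e^{Ch^rT}$, is just the written-out form of the paper's one-line Gr\"onwall step. One small quibble: \cref{ass:reg} only gives containment of the modified trajectory in $U$, so its containment in $U'$ comes from the lemma's standing hypothesis ``along the modified flow in $U'$'', not from \cref{ass:reg}.
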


\begin{proof}
The decay identity~\eqref{eq:Hdot} holds \emph{exactly} for the modified Hamiltonian along its own flow: $\frac{\dx}{\dx t}\tH_{h,q} = -\tH_{h,q}\,\partial_s\tH_{h,q}$. Writing $\tH_{h,q} = \tE_{h,q} + H^\star$,
\[
    \frac{\dx}{\dx t}\,\tE_{h,q} \;=\; -\,\partial_s\tH_{h,q}\,\tE_{h,q} \;-\; \partial_s\tH_{h,q}\,H^\star .
\]
By \cref{ass:cert}\textup{(ii)} and \cref{ass:bea}\textup{(i)}, $\partial_s\tH_{h,q} \ge -Ch^r$ on $U'$, and $H^\star \ge 0$ by \cref{ass:cert}\textup{(i)}, so $-\partial_s\tH_{h,q}\,H^\star \le C h^r H^\star$, which is~\eqref{eq:inherited_decay}. Gr\"onwall over $[0,T]$ gives the integrated form, the accumulated factor $e^{Ch^r T}$ being bounded for small $h$. When $H^\star = 0$ the forcing term is absent and the decay is exact.
\end{proof}

The normalization $f^\star \ge 0$ in \cref{ass:cert}(i) is innocuous for constant damping, but for state-dependent damping a constant shift of $f$ feeds back into the dynamics through the contact correction $s\,\nabla\beta$, so a lower reference for $f$ is a genuine hypothesis there; see \cref{rem:gauge} in \cref{app:proofs}.

\subsection{The rate-transfer theorem}

The theorem below is a \emph{certificate-transfer result}, not a global convergence theorem for arbitrary contact optimizers. It shows that once an autonomous contact Hamiltonian admits a continuous-time certificate $(\mcE,\partial_s H)$ on a compact region, an order-$r$ contact splitting transfers the associated finite-horizon decay envelope up to the usual modified-equation errors.

\begin{theorem}[Rate transfer via modified conformal factor]\label{thm:rate}
Let the Regularity and Compactness hypothesis (\cref{ass:reg}), the Positive Certificate Region hypothesis (\cref{ass:cert}), and the Backward-Error Input (\cref{ass:bea}) hold for the contact Hamiltonian $H$, the order-$r$ contact splitting $\Psi^{(r)}_h$, the compact region $U$, the horizon $T$, and the truncation order $q\ge r$. Then, for sufficiently small $h$ and $z_n=(\Psi^{(r)}_h)^n z_0$, the following hold for all $nh \le T$:
\begin{enumerate}
    \item[\textup{(i)}] \textbf{Modified certificate decay} (implied by \cref{lem:mod_decay}). Along the modified flow,
    \begin{equation}\label{eq:mod_decay}
        \tE_{h,q}\bigl(\Phi^{t}_{\tH_{h,q}} z_0\bigr) \;\le\; \widetilde{\Rate}_{h,q}(t)\;\tE_{h,q}(z_0) \;+\; C_0\,h^r,
        \qquad
        \widetilde{\Rate}_{h,q}(t) := \exp\!\Bigl(-\int_0^t \partial_s \tH_{h,q}\circ\Phi_{\tH_{h,q}}^\tau\,d\tau\Bigr),
    \end{equation}
    with $C_0 = 0$ when $H^\star = 0$.
    \item[\textup{(ii)}] \textbf{Conformal-factor perturbation.} The modified envelope tracks the continuous one,
    \begin{equation}\label{eq:env_perturb}
        \widetilde{\Rate}_{h,q}(t) \;\le\; \Rate_H(t) \;+\; C_T\,h^r\,t,
        \qquad
        \Rate_H(t):=\exp\!\Bigl(-\int_0^t \partial_s H\circ\Phi_H^\tau\,d\tau\Bigr),
    \end{equation}
    where $C_T$ depends on $H$, $U$, $T$ but not on $h$. The bound uses $\Rate_H\le 1$ from \cref{ass:cert}\textup{(ii)}.
    \item[\textup{(iii)}] \textbf{Discrete transfer.} Consequently,
    \begin{equation}\label{eq:envelope}
        \mcE(z_n)
        \;\le\;
        \Bigl[\Rate_H(nh) + C_T\,h^r\,nh\Bigr]\mcE(z_0)
        \;+\;
        \rho_{\mathrm{BEA}}(h,T) \;+\; C_{\mathrm{mod},T}\,h^r,
    \end{equation}
    where $\rho_{\mathrm{BEA}}$ is the certificate defect of \cref{ass:bea}\textup{(iii)} and $C_{\mathrm{mod},T}\,h^r$ collects the modified-certificate comparison implied by \cref{ass:bea}\textup{(i)} and the forcing term in \cref{lem:mod_decay}. By the comparison hypothesis \cref{ass:cert}\textup{(iii)},
    \begin{equation}\label{eq:objective_envelope}
        V(x_n)-f^\star
        \;\le\;
        C_{\mathrm{cmp}}\Bigl(\Bigl[\Rate_H(nh) + C_T\,h^r\,nh\Bigr]\mcE(z_0)
        \;+\;
        \rho_{\mathrm{BEA}}(h,T) + C_{\mathrm{mod},T}\,h^r\Bigr).
    \end{equation}
\end{enumerate}
\end{theorem}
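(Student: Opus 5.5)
Item (i) is exactly \cref{lem:mod_decay}, which uses only \cref{ass:reg}, \cref{ass:cert}(i)--(ii) and \cref{ass:bea}(i). Nothing new is needed there. The rest of the argument compares two envelopes and then passes from the modified flow to the iterates by shadowing.

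\textbf{Item (ii).} Write $\widetilde{\Rate}_{h,q}(t)=e^{-\tilde I(t)}$ and $\Rate_H(t)=e^{-I(t)}$, where
\[
\tilde I(t)=\int_0^t \partial_s\tH_{h,q}(\tilde z_\tau)\,d\tau,\qquad I(t)=\int_0^t \partial_s H(z_\tau)\,d\tau .
\]
Here $\tilde z_\tau=\Phi^\tau_{\tH_{h,q}}z_0$ and $z_\tau=\Phi^\tau_H z_0$. Split the integrand as
\[
\partial_s\tH_{h,q}(\tilde z_\tau)-\partial_s H(z_\tau)=\bigl[\partial_s\tH_{h,q}-\partial_s H\bigr](\tilde z_\tau)+\bigl[\partial_s H(\tilde z_\tau)-\partial_s H(z_\tau)\bigr].
\]
The first bracket is $O(h^r)$ uniformly on $U'$ by \cref{ass:bea}(i). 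For the second, Gr\"onwall applied to the two vector fields gives $\|\tilde z_\tau-z_\tau\|\le \tau e^{L\tau}\sup_U\|X_{\tH_{h,q}}-X_H\|$. The $C^1$ closeness in \cref{ass:bea}(i) makes this $O_T(h^r)$, provided the $C^1$ control of $\tH-H$ gives control of the vector field $X_{\tH}-X_H$. I expect this to be the main obstacle: the contact vector field involves first derivatives of $H$, so the $C^1$ statement suffices, but it should be noted explicitly. Since $\partial_s H$ is Lipschitz on $U$ by \cref{ass:reg}, the second bracket is also $O_T(h^r)$. Hence $I(t)-\tilde I(t)\le C h^r t$. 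The mean value theorem for $e^{-x}$ then gives
\[
e^{-\tilde I}\le e^{-I}e^{Ch^rt}\le e^{-I}\bigl(1+C'h^rt\bigr),\qquad t\le T .
\]
Using $e^{-I}\le 1$ from \cref{ass:cert}(ii) yields \eqref{eq:env_perturb} with $C_T=C'$.

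\textbf{Item (iii).} Set $t=nh\le T$ and chain four estimates:
\[
\mcE(z_n)\le \tE_{h,q}(z_n)+O(h^r)\le \tE_{h,q}(\tilde z_{nh})+L_{\tE}\,\delta_{\mathrm{BEA}}+O(h^r).
\]
The first inequality is $\sup_{U'}|\tE-\mcE|=O(h^r)$, from \cref{ass:bea}(i). The second uses the Lipschitz constant of $\tE$ together with the shadowing bound of \cref{ass:bea}(ii); the middle term is $\rho_{\mathrm{BEA}}$ by (iii). Next apply item (i) to $\tE_{h,q}(\tilde z_{nh})$. Replace $\widetilde{\Rate}$ by means of item (ii), which is legitimate because $\tE_{h,q}(z_0)\ge -Ch^r$ and the envelope multiplies a nonnegative quantity after absorbing errors. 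Finally replace $\tE_{h,q}(z_0)$ by $\mcE(z_0)+O(h^r)$. The resulting $O(h^r)$ terms carry coefficients bounded by $1+C_Th^rT$ and by $C_0$, so they collect into $C_{\mathrm{mod},T}h^r$. This proves \eqref{eq:envelope}. Applying \eqref{eq:objective_comparison} at $z_n\in U$, via \cref{ass:cert}(iii), gives \eqref{eq:objective_envelope}.

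One care point is sign bookkeeping. $\mcE(z_0)\ge0$ holds by \cref{ass:cert}(i), so multiplying the envelope inequality by it preserves the direction, and nonnegative damping is what allows the additive form instead of a multiplicative $e^{Ch^rT}$.
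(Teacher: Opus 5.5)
Your proposal is correct and follows essentially the same route as the paper. Item (i) is quoted from \cref{lem:mod_decay}; item (ii) is the Gr\"onwall comparison of the two contact vector fields (which are $O(h^r)$-close in $C^0$ because they involve only $H$, its first derivatives, and the bounded factor $p$), combined with the Lipschitz bound on $\partial_s H$ and $\Rate_H\le 1$; and item (iii) is the same three-term split into the modified-certificate comparison, the Lipschitz shadowing defect, and the modified decay, with your explicit absorption of a possibly negative $\tE_{h,q}(z_0)\ge -Ch^r$ being a small bookkeeping refinement the paper leaves implicit.
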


\begin{proof}
See \cref{app:proofs}. In outline: \textup{(i)} is \cref{lem:mod_decay}; \textup{(ii)} follows from a Gr\"onwall comparison of the modified and original flows, using $\Rate_H \le 1$ from \cref{ass:cert}\textup{(ii)}; \textup{(iii)} splits the certificate error into a comparison term, a shadowing term, and the modified decay from \textup{(i)}--\textup{(ii)}. The verification of \cref{ass:bea} for the master splittings used in this paper is carried out in \cref{app:bch}.
\end{proof}

The modified Hamiltonian $\tH_{h,q}$ is the contact analogue of the classical shadow Hamiltonian of symplectic integrators, and can be computed explicitly as a truncated series in $h$ using the contact Baker--Campbell--Hausdorff formula (\cref{app:bch}). Analyticity is not required for the rate-transfer mechanism itself. It is the assumption that upgrades the algebraic BEA defect to one that is exponentially small in $1/h$. The contact analogue of the symplectic principle ``shadow Hamiltonian $\Rightarrow$ modified energy $\Rightarrow$ long-time stability'' is:
\begin{quotation}
``shadow Hamiltonian $\Rightarrow$ modified conformal factor $\Rightarrow$ rate-envelope preservation.''    
\end{quotation}

\begin{remark}[Beyond the backward-error horizon]\label{rem:bea_horizon}
The restriction $nh\le T$ is a limit of the certificate supplied by the present backward-error argument and \textit{not} an instability threshold for the numerical method. Once the controlled shadowing horizon is exceeded, \cref{thm:rate} no longer guarantees that the original continuous-time envelope bounds the discrete certificate with the stated defect. It does \emph{not} follow that the iterates diverge, or even that their convergence rate deteriorates: the method may remain stable and converge for arbitrarily longer times. Establishing such behavior requires a separate argument, for example a discrete Lyapunov estimate, contractivity or spectral analysis, or a renewed local shadowing argument. The exact contactness of a composition of exact contact sub-flows likewise persists for every iterate for which the maps are defined; what expires at $T$ is the quantitative comparison with the chosen modified flow and certificate.
\end{remark}

\begin{corollary}[State-dependent damping]\label{cor:state_dep}
For the contact family $H(x,p,s)=\tfrac12\|p\|^2+f(x)+\beta(x)s$, the conformal rate is $\partial_sH=\beta(x)$. If, in addition to \cref{ass:reg,ass:cert,ass:bea}, $\beta(x)\ge \beta_{\min}>0$ on $U$, then $\Rate_H(t)\le e^{-\beta_{\min}t}$ and
\begin{equation}\label{eq:state_dep_envelope}
    \mcE(z_n)
    \;\le\;
    \Bigl[e^{-\beta_{\min}nh}+C_T\,h^r\,nh\Bigr]\mcE(z_0)
    \;+\;
    \rho_{\mathrm{BEA}}(h,T) + C_{\mathrm{mod},T}h^r,
\end{equation}
with the objective gap bounded by the same expression multiplied by $C_{\mathrm{cmp}}$. Thus $\beta(x)$ acts as a design parameter. The damping can react to the region of the landscape visited by the optimizer while retaining the same backward-error rate-transfer mechanism whenever the Hamiltonian certificate hypotheses are verified. Strongly convex objectives also admit the separate Bregman-type Lyapunov route of \cref{lem:bregman_lyap}. That certificate is not of Hamiltonian form and therefore transfers through \cref{cor:aux_transfer}, not through this corollary.
\end{corollary}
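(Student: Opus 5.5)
The corollary follows from \cref{thm:rate}(iii) once the conformal rate has been computed and the continuous envelope has been bounded by a pure exponential. No new backward-error analysis is needed; all of the discrete work is already in the theorem.

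\textbf{Step 1: the conformal rate.} For $H=\tfrac12\|p\|^2+f(x)+\beta(x)s$, the kinetic and potential terms do not depend on $s$. Hence $\partial_s H=\partial_s(\beta(x)s)=\beta(x)$. This is the third row of \cref{tab:classical_algos} and case (c) of \cref{sec:rates}.

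\textbf{Step 2: bound the continuous envelope.} By \cref{ass:reg}, the continuous trajectory $\Phi^\tau_H z_0$ remains in $U$ for $\tau\le T$. Along it, $\beta(x(\tau))\ge\beta_{\min}$, so
\[
\int_0^t \partial_s H\bigl(\Phi^\tau_H z_0\bigr)\,d\tau \;\ge\; \beta_{\min}\,t .
\]
Since $\exp$ is monotone, this gives $\Rate_H(t)\le e^{-\beta_{\min}t}$ for $t\le T$. The hypothesis $\beta_{\min}>0$ also implies \cref{ass:cert}(ii), so the theorem applies without further checks.

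\textbf{Step 3: substitute into the theorem.} Insert this bound at $t=nh$ into \eqref{eq:envelope}. Because $\mcE(z_0)\ge0$ by \cref{ass:cert}(i), replacing $\Rate_H(nh)$ by the larger quantity $e^{-\beta_{\min}nh}$ preserves the inequality, and we obtain \eqref{eq:state_dep_envelope} with the same constants $C_T$, $\rho_{\mathrm{BEA}}$ and $C_{\mathrm{mod},T}$. The objective-gap statement then follows by multiplying by $C_{\mathrm{cmp}}$ through \cref{ass:cert}(iii), exactly as in \eqref{eq:objective_envelope}.

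\textbf{Main obstacle.} The only real subtlety is that the envelope must be evaluated along the continuous trajectory, which therefore has to remain in the region where $\beta\ge\beta_{\min}$. \cref{ass:reg} guarantees this by keeping the trajectory in $U$ for $t\le T$.

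A further point is that the contact correction $-s\nabla\beta$ makes the dynamics depend on the normalization $f^\star\ge0$, as noted in \cref{rem:gauge}. This does not affect the argument, because that normalization is already included in \cref{ass:cert}(i) and is assumed rather than derived.
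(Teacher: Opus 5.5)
Your proof is correct and is the argument the paper intends. The paper gives no separate proof of this corollary: it is a direct specialization of \cref{thm:rate}(iii), obtained exactly as you do by computing $\partial_sH=\beta(x)$, bounding $\Rate_H(nh)\le e^{-\beta_{\min}nh}$ along the continuous trajectory (kept in $U$ by \cref{ass:reg}), and using $\mcE(z_0)\ge 0$ to substitute into \eqref{eq:envelope}. One cosmetic slip: the state-adaptive family is the sixth row of \cref{tab:classical_algos}, the first of the rows marked new, not the third row.
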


\subsection{Transfer of auxiliary Lyapunov certificates}
\label{sec:aux_transfer}

Objective-level certificates need not be of the Hamiltonian form $\mcE = H - H^\star$. A problem-specific Lyapunov construction, such as the Bregman-type Lyapunov certificate for state-dependent damping established in \cref{lem:bregman_lyap}, controls the objective gap without being tied to the conformal identity. Such certificates still transfer to the discrete iterates, through trajectory shadowing, at the price of an additive defect.

\begin{corollary}[Discrete transfer of auxiliary Lyapunov certificates]\label{cor:aux_transfer}
Let \cref{ass:reg} and \cref{ass:bea} hold, and let $W$ be $C^1$ on a neighborhood of $U$ with $W \ge 0$, $\frac{\dx}{\dx t} W(\Phi^t_H z_0) \le -\lambda_W\, W(\Phi^t_H z_0)$ for some $\lambda_W > 0$, and $f - f^\star \le C_W\, W$ on $U$. Then, for sufficiently small $h$ and $nh \le T$,
\begin{equation}\label{eq:aux_transfer}
    W(z_n) \;\le\; e^{-\lambda_W\, nh}\, W(z_0) \;+\; L_W\bigl(C_T'\,h^r + \delta_{\mathrm{BEA}}(h,T)\bigr),
    \qquad
    f(x_n) - f^\star \;\le\; C_W\, W(z_n),
\end{equation}
with $L_W := \mathrm{Lip}_{U'}(W)$ finite by compactness.
\end{corollary}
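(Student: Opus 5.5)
The plan is a three-term triangle inequality along the chain numerical iterate $\to$ modified flow $\to$ exact flow, followed by the continuous-time Lyapunov decay of $W$.

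Write $z(t)=\Phi^t_H z_0$, $\tilde z(t)=\Phi^t_{\tH_{h,q}}z_0$ and $z_n=(\Psi^{(r)}_h)^n z_0$. By \cref{ass:reg} all three trajectories stay in $U$, and by \cref{ass:bea} the numerical and modified trajectories stay in $U'$. So every point we evaluate $W$ at lies in the compact set where $W$ is $C^1$, and $W$ is Lipschitz there with constant $L_W$. We split
\[
W(z_n)\le W(z(nh)) + \bigl|W(\tilde z(nh))-W(z(nh))\bigr| + \bigl|W(z_n)-W(\tilde z(nh))\bigr|
\]
and bound the three terms in turn.

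\textbf{First term.} The hypothesis $\frac{\dx}{\dx t}W(z(t))\le-\lambda_W W(z(t))$ holds along the exact flow from $z_0$. Gr\"onwall's inequality then gives $W(z(nh))\le e^{-\lambda_W nh}W(z_0)$.

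\textbf{Third term.} By \cref{ass:bea}(ii), $\|z_n-\tilde z(nh)\|\le\delta_{\mathrm{BEA}}(h,T)$ for $nh\le T$. Hence this term is at most $L_W\,\delta_{\mathrm{BEA}}(h,T)$. Here we use $\mathrm{Lip}_{U'}(W)$, which is why $L_W$ is defined on $U'$ in the statement.

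\textbf{Second term (the main step).} This requires an $O(h^r)$ state-space shadowing estimate between the modified and original flows on $[0,T]$. By \cref{ass:bea}(i), $\tH_{h,q}-H=O(h^r)$ in $C^1(U')$. From \eqref{eq:contact_eom}, the contact vector field $X_H$ depends on $H$, $\nabla_x H$, $\nabla_p H$ and $\partial_s H$, with coefficients polynomial in $p$, which is bounded on $U$. Therefore $\|X_{\tH_{h,q}}-X_H\|_{C^0(U)}\le C h^r$.

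The subtle point is that the vector field needs only $C^1$ closeness of the Hamiltonians, and $C^1$ closeness is exactly what \cref{ass:bea}(i) provides. The Gr\"onwall comparison, however, needs a Lipschitz constant for $X_H$ itself, which needs $H\in C^2$. This is supplied by \cref{ass:reg}, since $H\in C^{q+2}$ with $q\ge r\ge1$. With $L_X=\mathrm{Lip}_U(X_H)$, the standard estimate
\[
\frac{\dx}{\dx t}\|\tilde z-z\|\le L_X\|\tilde z-z\|+Ch^r
\]
gives $\|\tilde z(t)-z(t)\|\le C h^r(e^{L_X T}-1)/L_X=:C_T'h^r$ for $t\le T$. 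This is the same comparison that underlies \cref{thm:rate}(ii). The second term is therefore at most $L_W C_T' h^r$. Note that we compare $W$ at two points rather than using Lipschitz dependence on the Hamiltonian. For this we need both points in the Lipschitz domain of $W$, which holds because both trajectories remain in $U$ and $W$ is $C^1$ on a neighborhood of $U$; we take $L_W$ as the Lipschitz constant on the convex hull or a suitable neighborhood, shrinking $h$ if needed.

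\textbf{Conclusion.} Summing the three bounds gives the first inequality in \eqref{eq:aux_transfer}. The second inequality is just the hypothesis $f-f^\star\le C_W W$ evaluated at $z_n\in U$. Unlike \cref{thm:rate}, no nonnegativity of the damping or comparison through $\mcE$ is needed, because $W$ is evaluated only on the exact trajectory and its decay is assumed directly.
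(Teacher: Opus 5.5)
Your proof is correct and is essentially the paper's argument: shadowing from \cref{ass:bea}(ii), a Gr\"onwall comparison of the modified and exact flows based on the $O(h^r)$ closeness of the contact vector fields, Lipschitz continuity of $W$, and the continuous decay along the exact trajectory. You also point out that the exact trajectory lies in $U$ but not necessarily in $U'$, which the paper's proof glosses over; the fix is to take $L_W$ as a Lipschitz constant of $W$ on $U$ (finite because $W$ is $C^1$ on a neighborhood of the compact set $U$), so the convex-hull device, which the hypotheses do not support anyway, is unnecessary.
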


\begin{proof}
See \cref{app:proofs}.
\end{proof}

The two routes are complementary. The conformal route of \cref{thm:rate} upgrades the mechanism from ``the trajectories are close'' to ``the dissipation bookkeeping is exact'' (\cref{prop:exact_conformal}), but it is tied to the Hamiltonian certificate. The shadowing route~\eqref{eq:aux_transfer} is blind to the conformal structure but accepts arbitrary certificates, in particular the functional of \cref{lem:bregman_lyap}.

\section{Closed-Form Verified Instances}
\label{sec:closed_form}

This section carries out step (iv) of the design recipe, certificate verification, in closed form for two representative families, and records the structural exactness property that sharpens the $O(h^r)$ accounting of \cref{thm:rate}. All splittings below use the closed-form atoms of \cref{sec:subflows}, and the closed forms have been verified against direct numerical integration at machine precision.

\subsection{Quadratic theory: a contact-certificate benchmark}
\label{sec:quadratic}

The first family is the constant-dissipation heavy-ball regime, where the certificate, the comparison constant, and the discrete map are all available in closed form, and where the certificate \emph{is} of Hamiltonian form. Dissipative leapfrog and its quadratic spectrum have already been analyzed through conformal-symplectic and presymplectic backward error analysis~\cite{francca2020conformal,francca2021dissipative}. Here the quadratic problem serves a different role, as a fully solvable benchmark showing how the augmented contact Hamiltonian becomes an objective-controlling certificate and how the hypotheses of \cref{thm:rate} can be checked sharply. Let $f(x) = \tfrac12 x^TAx$ with $A=A^T\succ0$, and write $\mu:=\lambda_{\min}(A)$ and $L:=\lambda_{\max}(A)$. Set
\begin{equation}
    H(x,p,s) = \frac{1}{2}\|p\|^2 + \frac{1}{2} x^T A x + \gamma s,
    \qquad 0<\gamma<2\sqrt{\mu}.
\end{equation}
The associated contact Hamiltonian equations are
\begin{equation}
    \dot{x} = p\qc \dot{p} = -A x - \gamma p\qc \dot{s} = \frac{1}{2}\|p\|^2 - \frac{1}{2} x^T A x - \gamma s,
\end{equation}
or, after eliminating the momentum, $\ddot{x} + \gamma \dot{x} + A x = 0$. Diagonalizing $A = Q\Lambda Q^T$ reduces the flow to damped oscillators $\ddot{y}_i + \gamma \dot{y}_i + \lambda_i y_i = 0$ in each eigendirection, with explicit underdamped solutions for $\gamma < 2\sqrt{\lambda_i}$.

\begin{prop}[Quadratic heavy ball: contact-certificate comparison and projected spectrum]\label{prop:quadratic}
Let $f(x) = \tfrac12 x^T A x$ with $A=A^T\succ0$ and $\mu=\lambda_{\min}(A)$, and let $H = \tfrac12\|p\|^2 + f(x) + \gamma s$ with $0 < \gamma < 2\sqrt\mu$, initialized at $p_0 = 0$, $s_0 = 0$. Then:
\begin{enumerate}
    \item[(i)] \textup{(Continuous certificate.)} $H^\star = f^\star = 0$, the certificate decays exactly, $\mcE(t) = e^{-\gamma t}\,\mcE(0) \ge 0$, and the comparison~\eqref{eq:objective_comparison} holds along the trajectory with the sharp constant
    \[
        C_{\mathrm{cmp}} \;=\; \frac{4\mu}{4\mu - \gamma^2}.
    \]
    This sharp comparison is along the continuous trajectory. The restriction $\gamma< 2\sqrt\mu$ is necessary for a horizon-uniform constant, since for any overdamped mode the ratio $(V-f^\star)/\mcE$ grows without bound.
    \item[(ii)] \textup{(Projected dissipative-leapfrog spectrum.)} For the Strang splitting~\eqref{eq:strang}, the $(x,p)$ dynamics decouple mode-wise into the explicit linear maps
    \[
        M_i(h) = \begin{pmatrix} 1 - h v_i & \delta h \\ -\delta v_i(2-hv_i) & \delta^2(1-hv_i)\end{pmatrix},
        \qquad v_i = \tfrac{\lambda_i h}{2},\quad \delta = e^{-\gamma h/2},
    \]
    with $\det M_i(h) = e^{-\gamma h}$ \emph{exactly}. Whenever $|1 - \lambda_i h^2/2| < \operatorname{sech}(\gamma h/2)$ (the oscillatory window), the eigenvalues of $M_i$ are complex with modulus \emph{exactly} $e^{-\gamma h/2}$, so the discrete spectral contraction rate equals the continuous rate with zero error, while the numerical angle satisfies $\theta_i(h) = \omega_i h + O(h^3)$, where $\omega_i := \sqrt{\lambda_i-\gamma^2/4}$ is the continuous underdamped frequency. This spectral statement does not make the full state error phase-only, because the numerical eigenvectors and the sine--cosine mixing of physical initial data also carry discretization error.
\end{enumerate}
\end{prop}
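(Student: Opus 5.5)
The plan is to exploit that for constant damping $D=\gamma s$ the $s$-variable never feeds back into $(x,p)$, so everything reduces to explicit linear algebra in the eigenbasis $A=Q\Lambda Q^T$. Throughout I write $y_i$ for the $i$-th eigencoordinate of $x$, $q_i$ for the corresponding coordinate of $p$, $\omega_i=\sqrt{\lambda_i-\gamma^2/4}$, and $a_i=\gamma/(2\omega_i)$. Part (i) rests on the exact contact identity~\eqref{eq:Hdot}, and part (ii) on composing the closed-form atoms~\eqref{eq:D_constant}, \eqref{eq:V_flow}, \eqref{eq:K_eucl} mode by mode.

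\textbf{Part (i).} Since $\partial_s H=\gamma$ is constant, \eqref{eq:H_envelope} gives $H(t)=e^{-\gamma t}H(0)$. With $p_0=0$ and $s_0=0$ we have $H(0)=f(x_0)\ge 0$. Because $f^\star=0$, we get $\mcE=H$, and hence $\mcE(t)=e^{-\gamma t}\mcE(0)\ge0$.

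For the comparison constant I will not try to track $s(t)$ directly. Instead I use the exact identity
\[
\frac{V(x(t))}{\mcE(t)}=\frac{e^{\gamma t}f(x(t))}{f(x_0)} .
\]
Each mode is an underdamped oscillator starting from rest, so
\[
y_i(t)=y_i(0)\,e^{-\gamma t/2}\bigl(\cos\omega_i t+a_i\sin\omega_i t\bigr).
\]
By Cauchy--Schwarz, $(\cos\omega_i t+a_i\sin\omega_i t)^2\le 1+a_i^2=4\lambda_i/(4\lambda_i-\gamma^2)$. This bound is decreasing in $\lambda_i$, so its maximum over the spectrum is attained at $\lambda_i=\mu$. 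Summing $\tfrac12\lambda_i y_i^2$ over modes gives $V(x(t))\le \frac{4\mu}{4\mu-\gamma^2}\,\mcE(t)$.

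Sharpness follows by taking $x_0$ in the $\mu$-eigenspace and choosing $t$ with $\tan(\omega t)=a$, which makes the Cauchy--Schwarz step an equality. For the necessity claim, an overdamped mode $\lambda<\gamma^2/4$ has $y(t)\sim c\,e^{-(\gamma/2-\sqrt{\gamma^2/4-\lambda})t}$, so $e^{\gamma t}y(t)^2$ grows exponentially and the ratio is unbounded. The borderline case $\lambda=\gamma^2/4$ is critically damped with $y(t)=y_0(1+\gamma t/2)e^{-\gamma t/2}$, so the ratio grows polynomially.

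\textbf{Part (ii).} I compose the five atoms of~\eqref{eq:strang} on a single mode $(y,q)$, with $v=\lambda h/2$ and $\delta=e^{-\gamma h/2}$:
\begin{itemize}
\item the damping half-step gives $q\mapsto\delta q$;
\item the potential half-step gives $q\mapsto \delta q-vy$;
\item the kinetic drift gives $y'=(1-hv)y+\delta h q$;
\item the second potential half-step gives $\delta q(1-hv)-v(2-hv)y$;
\item the final damping half-step multiplies this by $\delta$.
\end{itemize}
This reproduces $M_i(h)$. None of these steps involves $s$, which justifies the decoupling.

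The determinant is $\delta^2\bigl[(1-hv)^2+hv(2-hv)\bigr]=\delta^2=e^{-\gamma h}$. The trace is $(1-hv)(1+\delta^2)$. The eigenvalues are complex iff $\operatorname{tr}^2<4\det$, which rearranges to $|1-\lambda h^2/2|<2\delta/(1+\delta^2)=\operatorname{sech}(\gamma h/2)$. In that window both eigenvalues have modulus $\sqrt{\det}=\delta$ exactly.

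For the angle,
\[
\cos\theta=\frac{\operatorname{tr}}{2\delta}=(1-\lambda h^2/2)\cosh(\gamma h/2)=1-\tfrac12\omega^2h^2+O(h^4).
\]
Since the right-hand side is even in $h$, we get $\theta^2=\omega^2h^2+O(h^4)$, and hence $\theta=\omega h+O(h^3)$.

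\textbf{Expected obstacle.} Part (ii) is routine bookkeeping. The delicate step is the sharp comparison in part (i): choosing to bound $V/\mcE$ through the exact identity $\mcE(t)=e^{-\gamma t}f(x_0)$, rather than through $s(t)$, which is what makes the calculation tractable. A secondary point needing care is that the worst mode is the smallest eigenvalue, which is what produces the factor $4\mu/(4\mu-\gamma^2)$.
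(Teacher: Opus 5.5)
Your proposal is correct and follows essentially the paper's own derivation: for (i), the exact envelope $\mcE(t)=e^{-\gamma t}f(x_0)$ combined with the modal solution and the harmonic-addition (your Cauchy--Schwarz) bound, maximized at $\lambda=\mu$; for (ii), mode-wise composition of the atoms followed by the determinant, trace, discriminant and cosine expansion. The one item you leave unaddressed is the closing caveat that the state error is not phase-only, which the paper supports with the explicit iterate $y_{n,i}=e^{-\gamma nh/2}y_{0,i}\bigl(\cos n\theta_i+\tfrac{(1-\lambda_i h^2/2)\sinh(\gamma h/2)}{\sin\theta_i}\sin n\theta_i\bigr)$ for $q_{0,i}=0$, whose sine coefficient generally differs from $\gamma/(2\omega_i)$; you can obtain this in one line from $M^n=\delta^n\bigl(\cos n\theta\,I+\tfrac{\sin n\theta}{\sin\theta}(M/\delta-\cos\theta\,I)\bigr)$.
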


\begin{proof}
See \cref{app:quadratic_details}.
\end{proof}

The proposition rests on three closed-form facts. First, the contact identity gives $H(t) = e^{-\gamma t}H(0)$ with no inequality, so with $p_0=0$, $s_0=0$ the certificate is exact, $\mcE(t)=e^{-\gamma t}f(x_0)$, and the action variable requires no separate integration:
\begin{equation}\label{eq:s_closed_form}
    s(t) \;=\; \frac{1}{\gamma}\Bigl(e^{-\gamma t}H(0) - \tfrac12\|p(t)\|^2 - f(x(t))\Bigr).
\end{equation}
Second, along each mode $f$ oscillates inside the envelope, $\tfrac12\lambda_i y_i(t)^2 = e^{-\gamma t}\,\tfrac12\lambda_i y_{0,i}^2\, g_i(t)^2$ with $g_i(t) = \cos\omega_i t + \tfrac{\gamma}{2\omega_i}\sin\omega_i t$ and $\sup_t g_i^2 = 4\lambda_i/(4\lambda_i-\gamma^2)$ (\cref{app:quadratic_details}), so $K + D = \mcE - (V-f^\star)$ dips \emph{negative} whenever $g_i^2>1$. The convenient pointwise condition $K+D\ge0$ therefore fails transiently even in this simplest example, and the comparison-with-constant formulation of \cref{ass:cert}(iii), not pointwise positivity, is the right hypothesis. Moreover, as $\gamma \uparrow 2\sqrt\mu$ the certified envelope rate increases toward the optimal $2\sqrt\mu$ while $C_{\mathrm{cmp}} \to \infty$, so the classical critical-damping tradeoff appears here as a certificate-constant blowup. Third, the projected Strang map is an adjoint ordering of the dissipative leapfrog analyzed in~\cite{francca2020conformal,francca2021dissipative}. It has the same characteristic polynomial and therefore reproduces the known exact conformal spectral contraction. In \cref{fig:quad_envelope}, the Hamiltonian along the iterates stays close to the continuous envelope even at coarse steps (left), while the deviation of the discrete spectral contraction factor from $e^{-\gamma h/2}$ sits at the $10^{-16}$ floating-point floor throughout the oscillatory window (right). The explicit window and modal-shape calculation are recorded in \cref{app:quadratic_details}. At $\gamma=0$ the window reduces to the St\"ormer--Verlet stability interval $\lambda h^2 < 4$~\cite{hairer2006geometric}.

\begin{figure}[t]
    \centering
    \includegraphics[width=0.98\textwidth]{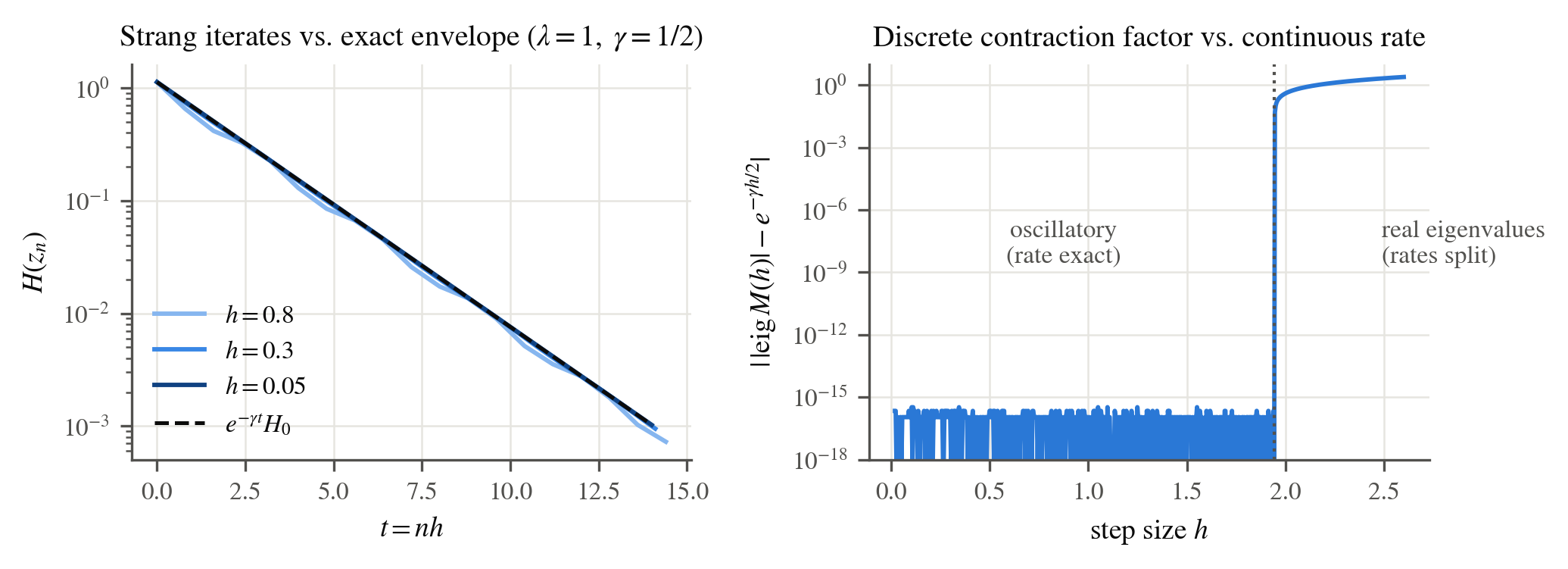}
    \caption{Quadratic contact-certificate benchmark (\cref{prop:quadratic}). \textbf{Left:} the augmented contact Hamiltonian along Strang iterates for three step sizes vs.\ the exact envelope $e^{-\gamma t}H_0$ ($\lambda=1$, $\gamma=1/2$), and even $h=0.8$ tracks the envelope with a bounded ripple. \textbf{Right:} reproduction of the exact spectral contraction known for dissipative leapfrog~\cite{francca2020conformal,francca2021dissipative}: deviation of $|\mathrm{eig}\,M(h)|$ from $e^{-\gamma h/2}$. Inside the oscillatory window the deviation is at floating-point level, although the phase and modal shape are not exact. Past $|1-\lambda h^2/2| = \operatorname{sech}(\gamma h/2)$ (dotted), the eigenvalues turn real and the rates split.}
    \label{fig:quad_envelope}
\end{figure}

\subsection{Exact discrete conformal bookkeeping}
\label{sec:exact_bookkeeping}

The known exact modal contraction of dissipative leapfrog is consistent with a structural property of the particular master splittings used here, namely that their contact-form multiplier can be evaluated exactly. We record that property in contact variables because it sharpens the $O(h^r)$ accounting of \cref{thm:rate} and extends directly to the state-dependent damping atoms below.

\begin{prop}[Exact discrete conformal bookkeeping]\label{prop:exact_conformal}
Let $\Psi_h$ be any composition of (a) sub-flows of strict Hamiltonians ($\partial_s H_i = 0$) and (b) exact damping sub-flows~\eqref{eq:D_constant} or~\eqref{eq:D_statedep} applied for durations $\tau_j$ at frozen positions $x_j$. Then
\[
    \Psi_h^*\,\alpha \;=\; \exp\Bigl(-\sum\nolimits_j \beta(x_j)\,\tau_j\Bigr)\,\alpha \qquad \text{exactly}.
\]
Consequently the cumulative discrete conformal factor $\sigma_h^n$ is an exact quadrature of the conformal rate along the numerical trajectory, and:
\begin{enumerate}
    \item[(i)] for constant damping $\beta \equiv \gamma$, $\sigma_h^n = \gamma\, nh$ exactly, so the discrete envelope rate carries no discretization error at any order;
    \item[(ii)] for state-dependent damping, the $O(h^r)$ term in the conformal tracking estimate~\eqref{eq:disc_conformal_track} is entirely the quadrature-and-trajectory error of sampling $\beta$ along the iterates, verified numerically at the predicted orders in \cref{fig:conformal_scaling}.
\end{enumerate}
\end{prop}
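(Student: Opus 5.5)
The plan is to use the multiplicative structure of conformal factors under composition and then compute the factor of each atom directly. Suppose $\psi_1^*\alpha = e^{\lambda_1}\alpha$ and $\psi_2^*\alpha = e^{\lambda_2}\alpha$. Then
\[
(\psi_2\circ\psi_1)^*\alpha = \psi_1^*\bigl(e^{\lambda_2}\alpha\bigr) = e^{\lambda_2\circ\psi_1+\lambda_1}\alpha .
\]
So the conformal exponent of a composition is the sum of the exponents of the factors, each evaluated at the point where that factor is applied along the numerical trajectory. It therefore suffices to identify the exponent of each atom.

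\textbf{Strict atoms.} For Hamiltonians with $\partial_s H_i=0$ I will use the standard contact identity $\mcL_{X_H}\alpha = -(\partial_s H)\alpha$ under the convention \eqref{eq:contact_eom}. I will check this once via Cartan's formula: $\alpha(X_H) = -H$, and $d\alpha(X_H,\cdot)$ is read off from the equations of motion. For strict atoms the Lie derivative vanishes, so $\Phi^{\tau*}\alpha=\alpha$ exactly. One can also verify this directly on the closed forms. For example, for the shear \eqref{eq:K_eucl},
\[
d\bigl(s+\tfrac\tau2\|p\|^2\bigr) - p^\top d(x+\tau p) = ds - p^\top dx .
\]
For the kick \eqref{eq:V_flow},
\[
ds - \tau\nabla f^\top dx - (p-\tau\nabla f)^\top dx = \alpha .
\]

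\textbf{Damping atoms.} For \eqref{eq:D_statedep} I will pull back directly. Writing $e := e^{-\beta(x)\tau}$,
\[
d(es) - e\bigl(p-\tau s\nabla\beta\bigr)^\top dx = e\,ds + s\,de - e\,p^\top dx + e\tau s\,\nabla\beta^\top dx .
\]
Since $de = -\tau e\,\nabla\beta^\top dx$, the two extra terms cancel, leaving $e\,\alpha$. Hence the exponent is $-\beta(x_j)\tau_j$ with $x_j$ the frozen position. This holds regardless of the sign of $\tau_j$, which covers the Yoshida stages with negative coefficients. The constant atom \eqref{eq:D_constant} is the special case $\nabla\beta=0$. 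Summing over the composition gives the displayed formula.

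\textbf{Consequences.} Iterating over $n$ steps, the cumulative exponent $\sigma_h^n$ is the sum of the $\beta(x_j)\tau_j$ over all damping stages along the numerical orbit. This is a quadrature of $\int\beta$ with nodes at the iterates and weights equal to the signed sub-step durations. For (i), the weights within one step sum to $h$, since the splitting is consistent and the coefficients of the $D$-atom add to one. With $\beta\equiv\gamma$ this gives $\sigma_h^n=\gamma nh$ with no error at any order. For (ii), comparing $\sigma_h^n$ with $\int_0^{nh}\beta(x(\tau))\,d\tau$ splits into two pieces. The first is the quadrature error of the composition weights, which is $O(h^r)$ per unit time by the order conditions. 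The second is the trajectory error between the iterates and the flow, controlled via \cref{ass:bea}(ii) and the Lipschitz constant of $\beta$ from \cref{ass:reg}. No other error source enters, because the conformal exponent itself is exact.

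\textbf{Main obstacle.} The only delicate point is bookkeeping, not analysis. One must track that each stage's exponent is evaluated at the image point under the preceding stages, per the composition rule, and fix the sign convention relating $\Psi^*\alpha = e^{\lambda}\alpha$ to decay. I will settle the convention by checking the constant case against $\det M_i(h)=e^{-\gamma h}$ from \cref{prop:quadratic}.
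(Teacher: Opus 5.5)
Your argument is correct and is essentially the paper's: multiplicativity of pullbacks under composition plus the per-atom factors ($1$ for strict atoms, $e^{-\beta(x_j)\tau_j}$ for damping at frozen $x_j$). The only difference is that you verify the damping factor by direct pullback of the closed form~\eqref{eq:D_statedep}, which also covers negative Yoshida durations, rather than via $\exp(-\int_0^{\tau_j}\partial_s D\,dt)$ along the exact sub-flow. Your extra sketch of why the error in (ii) is $O(h^r)$ is not needed, since the order is taken from~\eqref{eq:disc_conformal_track}, and for Yoshida it is too coarse as stated: the internal damping nodes are not grid iterates covered by \cref{ass:bea}(ii), separate Lipschitz bounds on the quadrature and trajectory errors would give only $O(h^3)$, and the fourth order rests on cancellations captured by the backward-error argument of \cref{lem:mod_conformal}.
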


\begin{proof}
Strict sub-flows preserve $\alpha$ exactly, since their conformal rate $\partial_s H_i$ vanishes identically. The damping sub-flow is the \emph{exact} contact flow of $D=\beta(x)s$ for time $\tau_j$. Along this flow $x$ is genuinely frozen ($\nabla_p D = 0$), so its conformal factor is $\exp(-\int_0^{\tau_j}\partial_s D\,dt) = \exp(-\beta(x_j)\tau_j)$ with no approximation. Pullback is multiplicative under composition, which gives the product formula, and (i) and (ii) are immediate specializations.
\end{proof}

\Cref{prop:exact_conformal} makes precise how the contact frame keeps the dissipation bookkeeping under discretization. The stated splitting preserves the contact-form multiplier \emph{exactly}, and for state-dependent damping all $O(h^r)$ losses in that multiplier are pushed into where the damping is sampled. This statement concerns the pullback of $\alpha$. It does not assert exact decay of the original Hamiltonian evaluated at the numerical iterates, which may exhibit an $O(h^r)$ modified-equation ripple. The result is conditional on using exact constituent contact maps whose multipliers have the displayed product. A generic integrator that combines damping with another stage has no automatic exact-multiplier guarantee, although an exact combined contact flow with constant $s$-derivative $\gamma$ still has multiplier $e^{-\gamma\tau}$. \Cref{rem:exact_scope} in \cref{app:proofs} further delineates the role of exact sub-steps and frozen-coefficient damping, and explains why an exact atom with an evolving rate (such as the nonlinear damping~\eqref{eq:D_nonlinear}) still has an exact multiplier, but one that is no longer the sampled sum $\sum_j \beta(x_j)\tau_j$ of the nominal rate, because that rate varies during the sub-step instead of being fixed at the incoming point.

\subsection{Conformal-factor tracking at the predicted order}
\label{sec:tracking_numerics}

For state-dependent damping the discrete conformal factor is an exact quadrature of $\beta$ along the iterates (\cref{prop:exact_conformal}(ii)), so the tracking error against the continuous integral $\int_0^T \beta(x(t))\,dt$ is governed by the order of the splitting, as asserted in \cref{ass:bea} and \cref{lem:mod_conformal}. \Cref{fig:conformal_scaling} verifies this on an anharmonic convex objective $f(x) = \tfrac14 x^4 + \tfrac12 x^2$ with analytic damping $\beta(x) = 0.6 + 0.3\tanh(x^2)$: the cumulative discrete conformal factor converges to the continuous integral at slope $2.01$ for Strang and slope $4.02$ for the Yoshida triple jump, matching the theoretical orders to within fitting error.

\begin{figure}[t]
    \centering
    \includegraphics[width=0.5\textwidth]{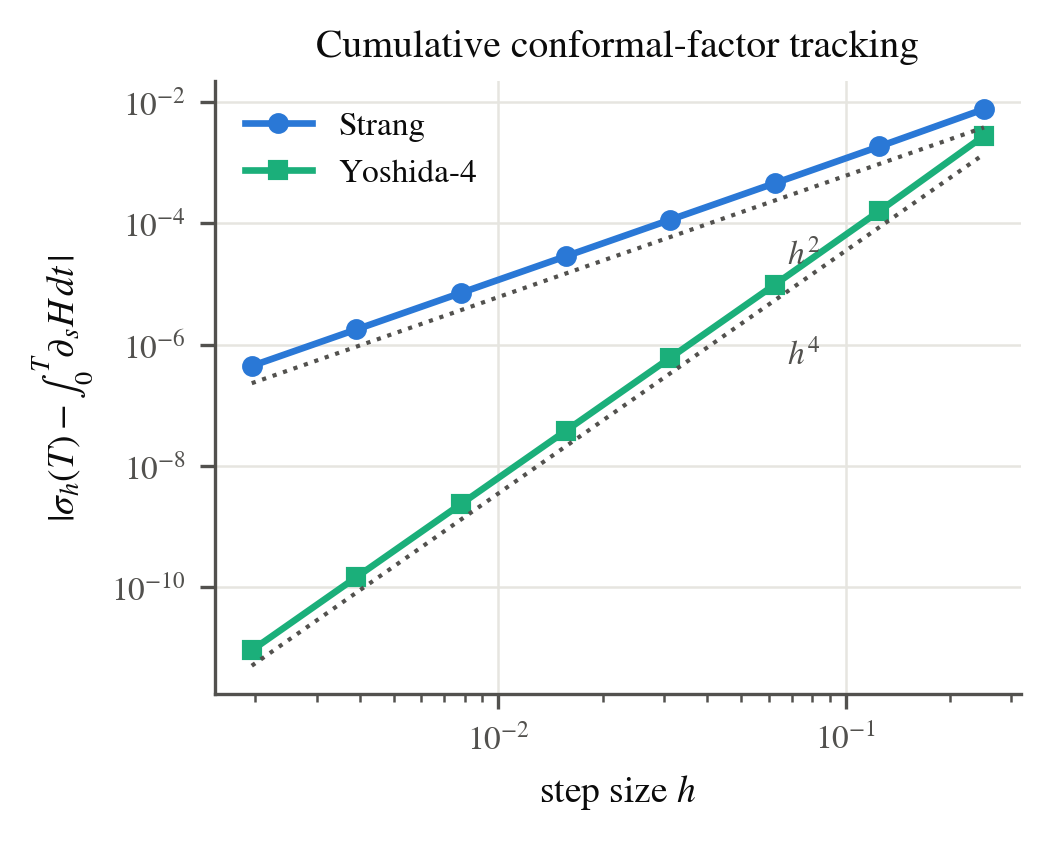}
    \caption{Numerical verification of discrete conformal-factor tracking (\cref{lem:mod_conformal}): error of the cumulative discrete conformal factor $\sigma_h(T)$ against $\int_0^T \partial_s H\,dt$ along the exact flow, for the state-dependent family $H=\tfrac12 p^2 + \tfrac14 x^4 + \tfrac12 x^2 + (0.6+0.3\tanh x^2)\,s$, $T=4$. The dotted guides show the predicted slopes, $h^2$ for Strang (upper pair) and $h^4$ for Yoshida-4 (lower pair), against fitted slopes of $2.01$ and $4.02$.}
    \label{fig:conformal_scaling}
\end{figure}

\subsection{State-dependent damping as a design parameter}
\label{sec:state_dep_design}

With the conformal rate $\partial_s H = \beta(x)$, the damping adapts pointwise to the landscape without learning-rate scheduling or adaptive-gradient preconditioning. Natural schedules include
\begin{align}
    \beta(x) &= \gamma_0 + \gamma_1\,\rho(x)^q &&\text{(objective-adaptive; damp hard while far from optimum)},\label{eq:beta_adapt}\\
    \beta(x) &= \gamma_0 + \gamma_1 / \bigl(1 + c\,\rho(x)^q\bigr) &&\text{(inverse-adaptive; accelerate across plateaus)},\label{eq:beta_inv}\\
    \beta(x) &= \gamma_0 + \gamma_1\,\mathrm{tr}\,\nabla^2 f(x) &&\text{(curvature-adaptive via Hessian trace)},
\end{align}
where $\rho(x)$ is a clipped normalized objective gap (\cref{app:algorithms}), $\gamma_0,\gamma_1,c$ are tuning constants, and the exponent $q$ is a schedule parameter rather than the BEA truncation order used above. Each is realized by the single closed-form sub-flow~\eqref{eq:D_statedep}, so the conformal bookkeeping of \cref{prop:exact_conformal} is exact for the sampled damping along the numerical trajectory. Objective-level guarantees then require one of two additional certificates: the Hamiltonian comparison hypotheses of \cref{cor:state_dep}, or a separate Lyapunov certificate such as \cref{lem:bregman_lyap}, which transfers only through the shadowing statement \cref{cor:aux_transfer}. \Cref{fig:state_damping} makes the conformal mechanism visible on the two-dimensional Rosenbrock function with the objective-adaptive schedule~\eqref{eq:beta_adapt}. Damping is strong in the outer landscape where the objective gap is large, relaxes in the valley, and the certificate $\mcE(z_n)$ tracks its accumulated conformal envelope $e^{-\sigma_n}\mcE(z_0)$ along the entire run. Here $\sigma_n$ denotes the cumulative sampled conformal exponent, and the objective gap stays below the certificate on this trajectory. The schedules~\eqref{eq:beta_adapt} and~\eqref{eq:beta_inv} are exactly the C-Adapt and C-InvGrad algorithms benchmarked in \cref{sec:computation}.

\begin{figure}[t]
    \centering
    \includegraphics[width=0.98\textwidth]{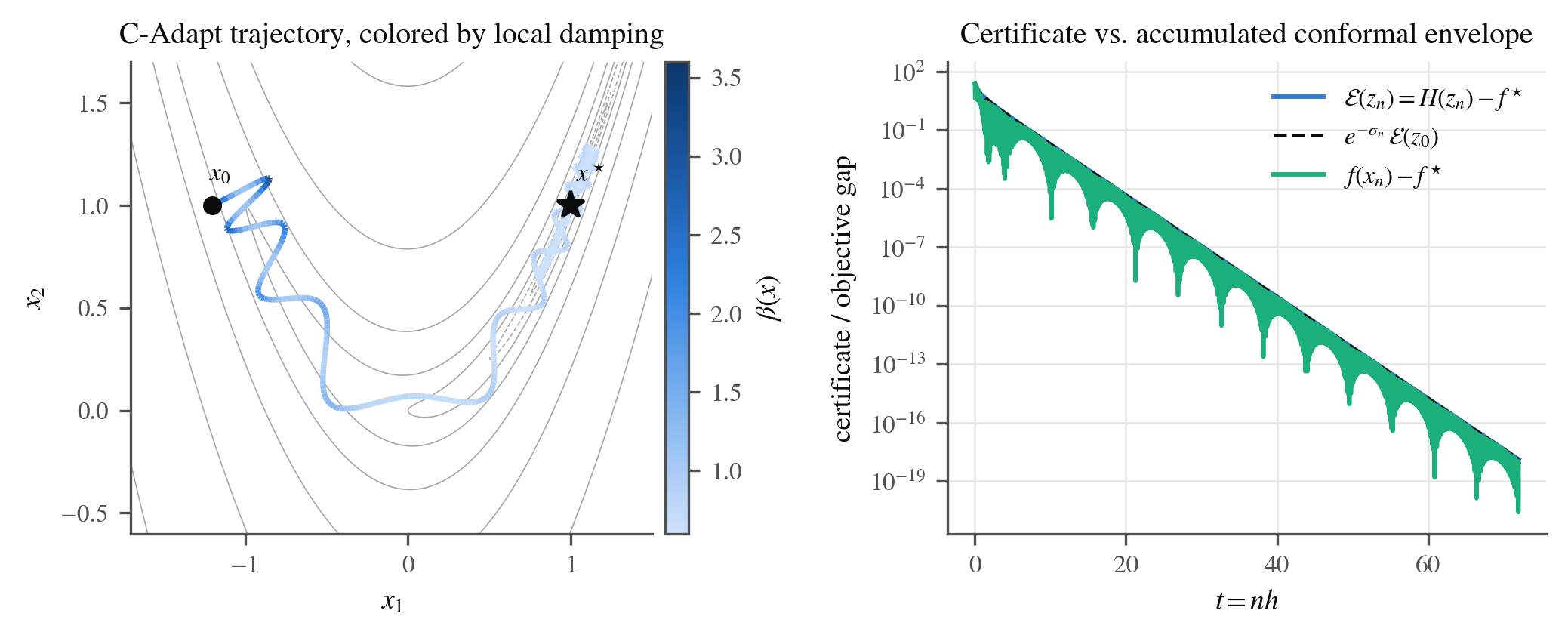}
    \caption{State-dependent damping on 2-D Rosenbrock. \textbf{Left:} Strang trajectory of the objective-adaptive contact optimizer, colored by the local damping $\beta(x)$ (light = weak damping in the valley, dark = strong damping at large objective gap). Contours show $\log_{10} f$. \textbf{Right:} the certificate $\mcE(z_n)=H(z_n)-f^\star$ tracks the accumulated conformal envelope $e^{-\sigma_n}\mcE(z_0)$ (dashed), where $\sigma_n$ is the cumulative sampled conformal exponent, over nineteen orders of magnitude, and the objective gap remains below the certificate, as the comparison hypothesis requires.}
    \label{fig:state_damping}
\end{figure}

For strongly convex objectives, the Lyapunov route can be made fully explicit. The following certificate extends the Polyak heavy-ball construction by an $s^2$ term that absorbs the coupling introduced by $\nabla\beta$. Quadratic certificates of this shape, together with windows restricting the damping relative to the strong-convexity modulus, are standard for constant and time-dependent damping~\cite{polyak1964heavyball,wilson2021lyapunov,attouch2018fast}, and what is new here is the pair of smallness conditions on $\nabla\beta$ that keep the construction valid once the damping varies with the state. Being of non-Hamiltonian form, it transfers to the discrete iterates through the shadowing statement \cref{cor:aux_transfer} rather than through the conformal identity.

\begin{lemma}[Bregman-type Lyapunov certificate for state-dependent damping]\label{lem:bregman_lyap}
Let $f: \R^n \to \R$ be $\mu$-strongly convex, $C^1$, with minimizer $x^\star$ and $f^\star = f(x^\star) = 0$ (the objective normalized at its minimum; see \cref{rem:gauge}). Let $\beta \in C^1$ with $0 < \beta_{\min} \le \beta(x) \le \beta_{\max}$ on a compact region $U$, write $B := \|\nabla\beta\|_{L^\infty(U)}$, and assume the trajectory of $H = \tfrac12\|p\|^2 + f(x) + \beta(x)\,s$ remains in $U$ with $|s(t)| \le S$ for all $t$. Assume the damping and smallness windows
\begin{equation}\label{eq:lyap_windows}
    \textup{(H1)}\ \ \beta_{\min}\,\beta_{\max} \le \tfrac{3}{2}\,\mu,
    \qquad
    \textup{(H2)}\ \ B \le \tfrac{\beta_{\min}}{8},
    \qquad
    \textup{(H3)}\ \ B\,S \le \tfrac{3}{32}\,\beta_{\min}^2 .
\end{equation}
Set $\sigma := \beta_{\min}/2$ and $\rho := \tfrac{2}{3}\,B/\beta_{\min}$ (with $\rho := 0$ if $\nabla\beta \equiv 0$), and define
\[
    \tE(x,p,s) \;:=\; \tfrac12\bigl\|p + \sigma(x - x^\star)\bigr\|^2 \;+\; f(x) \;+\; \tfrac{\rho}{2}\,s^2.
\]
Then on $U$:
\begin{enumerate}
    \item[(i)] $\tE(z) \ge 0$, with equality iff $(x, p, s) = (x^\star, 0, 0)$ when $\rho > 0$ (iff $(x,p) = (x^\star,0)$ when $\rho = 0$);
    \item[(ii)] $\dfrac{\dx}{\dx t}\,\tE \;\le\; -\dfrac{\beta_{\min}}{4}\,\tE$ along the contact flow of $H$;
    \item[(iii)] $f(x) - f^\star \le \tE(z)$, i.e.\ the objective comparison holds with constant $1$ against $\tE$ (the hypothesis $C_W = 1$ of \cref{cor:aux_transfer}, not the Hamiltonian comparison~\eqref{eq:objective_comparison}).
\end{enumerate}
The constants in~\eqref{eq:lyap_windows} are sufficient, not optimized, and $L$-smoothness of $f$ is not needed at this level. In the constant-damping limit $\nabla\beta \equiv 0$ the certificate reduces to the classical Polyak construction, and (H1) is the familiar restriction of the damping below the critical scale $\sqrt\mu$ (in geometric mean).
\end{lemma}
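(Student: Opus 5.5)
Write $e := x - x^\star$. Items (i) and (iii) are immediate from the form of $\tE$. Each of its three summands is nonnegative: the squared norm, $\tfrac{\rho}{2}s^2$ (since $\rho\ge 0$), and $f(x)\ge f^\star = 0$. If $\tE=0$, then $f(x)=0$, which forces $x = x^\star$ by strong convexity. Then $p=0$ from the first summand, and $s=0$ from the last summand when $\rho>0$. This gives (i). Dropping the two nonnegative summands gives $f(x)-f^\star\le\tE$, which is (iii). The real work is (ii).

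For (ii), I differentiate along the contact equations~\eqref{eq:contact_eom} with $H=\tfrac12\|p\|^2+f+\beta s$. These read $\dot x=p$, $\dot p=-\nabla f-s\nabla\beta-\beta p$, and $\dot s=\tfrac12\|p\|^2-f-\beta s$. Setting $u:=p+\sigma e$, we get $\dot u=-\nabla f - s\nabla\beta-(\beta-\sigma)p$. Hence
\[
\dot\tE = \langle u,\dot u\rangle+\langle\nabla f,p\rangle+\rho s\dot s .
\]
Expanding, the cross terms $\langle p,\nabla f\rangle$ cancel, and I obtain
\[
-(\beta-\sigma)\|p\|^2-\sigma(\beta-\sigma)\langle e,p\rangle-\sigma\langle e,\nabla f\rangle - s\langle p+\sigma e,\nabla\beta\rangle+\rho s\bigl(\tfrac12\|p\|^2-f-\beta s\bigr).
\]
The constant-damping part is handled exactly as in the Polyak construction. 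Strong convexity gives $\langle e,\nabla f\rangle\ge f+\tfrac{\mu}{2}\|e\|^2$. We also have $\beta-\sigma\ge\beta_{\min}/2$, and the indefinite term $\sigma(\beta-\sigma)\langle e,p\rangle$ is split by Young's inequality. Using (H1) in the form $\sigma\beta_{\max}\le\tfrac34\mu$, the resulting quadratic form in $(e,p)$ should dominate $\tfrac{\beta_{\min}}{4}\bigl(\tfrac12\|u\|^2+f\bigr)$ with slack. I would aim to show that the $(e,p)$ part is at most $-c_1\|p\|^2-c_2\|e\|^2-\sigma f$ with explicit $c_1,c_2\gtrsim\beta_{\min}$, and then compare with $\|u\|^2\le 2\|p\|^2+2\sigma^2\|e\|^2$.

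The $s$-terms are the main obstacle. The good term is $-\rho\beta s^2\le-\rho\beta_{\min}s^2$. This is what must absorb $\tfrac{\beta_{\min}}{4}\cdot\tfrac\rho2 s^2$ together with the coupling $|s|\,B(\|p\|+\sigma\|e\|)$. I would bound the coupling by Young with weight tuned to $\rho=\tfrac23B/\beta_{\min}$, namely $|s|B\|p\|\le\tfrac{\rho\beta_{\min}}{4}s^2+\tfrac{B^2}{\rho\beta_{\min}}\|p\|^2$, and similarly for $\|e\|$. The leftover $\tfrac{B^2}{\rho\beta_{\min}}=\tfrac32B$ then has to be absorbed into $c_1,c_2$ via (H2), since $B\le\beta_{\min}/8$.

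The cubic-looking terms $\tfrac\rho2 s\|p\|^2$ and $-\rho sf$ are handled with the a priori bound $|s|\le S$. They are at most $\rho S(\tfrac12\|p\|^2+f)=\tfrac23\tfrac{BS}{\beta_{\min}}(\cdot)$, and by (H3) this is at most $\tfrac{1}{16}\beta_{\min}(\tfrac12\|p\|^2+f)$, small enough to be eaten by the $-c_1\|p\|^2-\sigma f$ margin.

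The final step is bookkeeping. I collect the coefficients and check that each of $\|p\|^2$, $\|e\|^2$, $f$, $s^2$ ends with coefficient at most $-\tfrac{\beta_{\min}}4$ times its weight in $\tE$. I expect this constant-chasing, choosing the Young splits so that the stated numbers $\tfrac32,\tfrac18,\tfrac{3}{32}$ suffice, to be the delicate part. If a split fails, I would first try putting the $\sigma\|e\|$ coupling against the $\tfrac\mu2\sigma\|e\|^2$ strong-convexity margin rather than against $s^2$. Finally, in the limit $\nabla\beta\equiv0$ with $\rho=0$ all $s$-terms vanish, recovering the classical case.
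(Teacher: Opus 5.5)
Your treatment of (i) and (iii) matches the paper. Your formula for $\dot\tE$ is also correct: it is the paper's identity written in $(e,p)$ instead of $(y,w)$ with $w=p+\sigma y$.

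The gap is (ii), which is the entire content of the lemma. You leave its bookkeeping undone, and the concrete choices you do commit to cannot close under (H1)--(H3). Two choices combine to cause this.
\begin{enumerate}
\item Replacing $\tfrac12\|u\|^2$ by $\|p\|^2+\sigma^2\|e\|^2$ forces you to produce a $\|p\|^2$ coefficient $\le-\tfrac{\beta_{\min}}{4}$, which is twice what is really needed. It also forces an $\|e\|^2$ coefficient $\le-\tfrac{\beta_{\min}}{4}\sigma^2$.
\item Your split of $|s|B\|p\|$ spends only $\tfrac{\rho\beta_{\min}}{4}s^2$. It leaves $\tfrac32B\|p\|^2$, i.e.\ up to $\tfrac{3}{16}\beta_{\min}\|p\|^2$ by (H2).
\end{enumerate}
Now take a point with $\beta(x)=\beta_{\min}$ (so $\beta-\sigma=\beta_{\min}/2$), with (H2) and (H3) tight. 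The only negative $\|p\|^2$ term is $-\tfrac{\beta_{\min}}{2}\|p\|^2$. Subtract the target $\tfrac{\beta_{\min}}{4}$, the coupling leftover $\tfrac{3}{16}\beta_{\min}$, and the cubic cost $\tfrac{\rho S}{2}=\tfrac{\beta_{\min}}{32}$. Only $\tfrac{\beta_{\min}}{32}\|p\|^2$ remains to absorb $\sigma(\beta-\sigma)|\langle e,p\rangle|=\tfrac{\beta_{\min}^2}{4}|\langle e,p\rangle|$, and Young then charges $\tfrac{\beta_{\min}^3}{2}\|e\|^2$.

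The available margin is $\tfrac{\sigma\mu}{2}=\tfrac{\beta_{\min}\mu}{4}$. Converting the unused $f$-slack via $f\ge\tfrac\mu2\|e\|^2$ adds at most $\tfrac{3\beta_{\min}\mu}{32}$. With (H1) tight and $\beta_{\max}\le 2\beta_{\min}$, this totals at most $\tfrac{11}{24}\beta_{\min}^3<\tfrac12\beta_{\min}^3$. So the $\|e\|^2$ row fails before the target and the $e$-coupling are even counted. Your fallback of shifting the $e$-coupling onto the strong-convexity margin does not touch the binding $\|p\|^2$ budget. In any case, a bilinear $|s|\,\|e\|$ term always returns some $s^2$ under Young.

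The missing idea is to keep the bookkeeping in the variables $y=x-x^\star$, $w=p+\sigma y$. In these variables $\tE=\tfrac12\|w\|^2+f+\tfrac\rho2 s^2$ contains no $\|y\|^2$ term, so the $\|y\|^2$ row need only be nonpositive. Set $g:=\beta-\sigma\ge\beta_{\min}/2$ and proceed as follows.
\begin{enumerate}
\item Write $-g\,w\cdot p=-g\|w\|^2+g\sigma\,w\cdot y\le-\tfrac g2\|w\|^2+\tfrac{g\sigma^2}{2}\|y\|^2$.
\item Bound the coupling jointly, $|s\,w\cdot\nabla\beta|\le\tfrac B2\|w\|^2+\tfrac B2 s^2$, rather than splitting $w$ into $p+\sigma e$.
\item Use $\tfrac12\|p\|^2\le\|w\|^2+\sigma^2\|y\|^2$ only inside the cubic term, which is bounded with $|s|\le S$.
\end{enumerate}
The rows are then:
\begin{enumerate}
\item $\|w\|^2$: $-\tfrac g2+\tfrac B2+\rho S\le-\tfrac{\beta_{\min}}{4}+\tfrac{\beta_{\min}}{16}+\tfrac{\beta_{\min}}{16}=-\tfrac{\beta_{\min}}{8}$.
\item $f$: $-\sigma+\rho S\le-\tfrac{\beta_{\min}}{4}$.
\item $s^2$: $\tfrac B2\le\tfrac78\rho\beta_{\min}=\tfrac{7}{12}B$.
\item $\|y\|^2$: $\tfrac{g\sigma}{2}+\rho S\sigma\le\tfrac{9}{32}\beta_{\min}\beta_{\max}\le\tfrac{27}{64}\mu<\tfrac\mu2$ by (H1).
\end{enumerate}
This is exactly where the constants $\tfrac32$, $\tfrac18$, $\tfrac{3}{32}$ come from.
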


\begin{proof}
See \cref{app:lyapunov}.
\end{proof}

\subsection{Nonlinear and momentum-coupled dissipation}
\label{sec:nonlinear_design}

Two further dissipation atoms are accessible only in the contact frame, in the sense that their conformal factors depend on the state and hence admit no conformal-symplectic realization with a fixed rate.

\paragraph{Self-regulating damping $D=(\gamma/2)s^2$.} The conformal rate $\partial_s H = \gamma s$ depends on the accumulated action variable. Large $|s|$ early in optimization produces strong damping, while $s \to 0$ near the minimum lets the optimizer coast. The sub-flow~\eqref{eq:D_nonlinear} is exact in closed form, and implementations clamp the step below the Riccati singularity at $\tau = 2/(\gamma|s|)$ for $s<0$. Since $D \ge 0$ always, the positivity items of \cref{ass:cert} are structurally free for this family. What is conditional is the \emph{sign} of the rate, since the full flow does not preserve $s\ge0$. In \cref{fig:nonlinear_s2}, the instantaneous rate $\gamma s(t)$ starts large and relaxes as the action variable empties. On a fixed quadratic a well-tuned constant damping still wins. The value of this atom is scheduling-free rate adaptation rather than raw speed, and the comparison is against one tuned baseline on one objective. Atoms in this class chosen with the rate profile in mind, rather than for closed-form convenience, may well be competitive on speed too, which we leave to future work.

\paragraph{Momentum-transfer damping $D = \alpha\|p\|^2 s$.} Here the conformal rate and envelope are
\[
    \partial_s H = \alpha\|p\|^2,
    \qquad
    \exp\!\Bigl(-\alpha\int_0^t \|p\|^2\,d\tau\Bigr).
\]
The damping is self-regulating in the momentum, damping fast motion while leaving slow motion nearly conservative. At the sub-flow level, the closed form~\eqref{eq:D_psq} shows the momentum norm decreasing while the action variable grows by the reciprocal factor, an explicit transfer between the two dissipation channels.

\begin{figure}[t]
    \centering
    \includegraphics[width=0.98\textwidth]{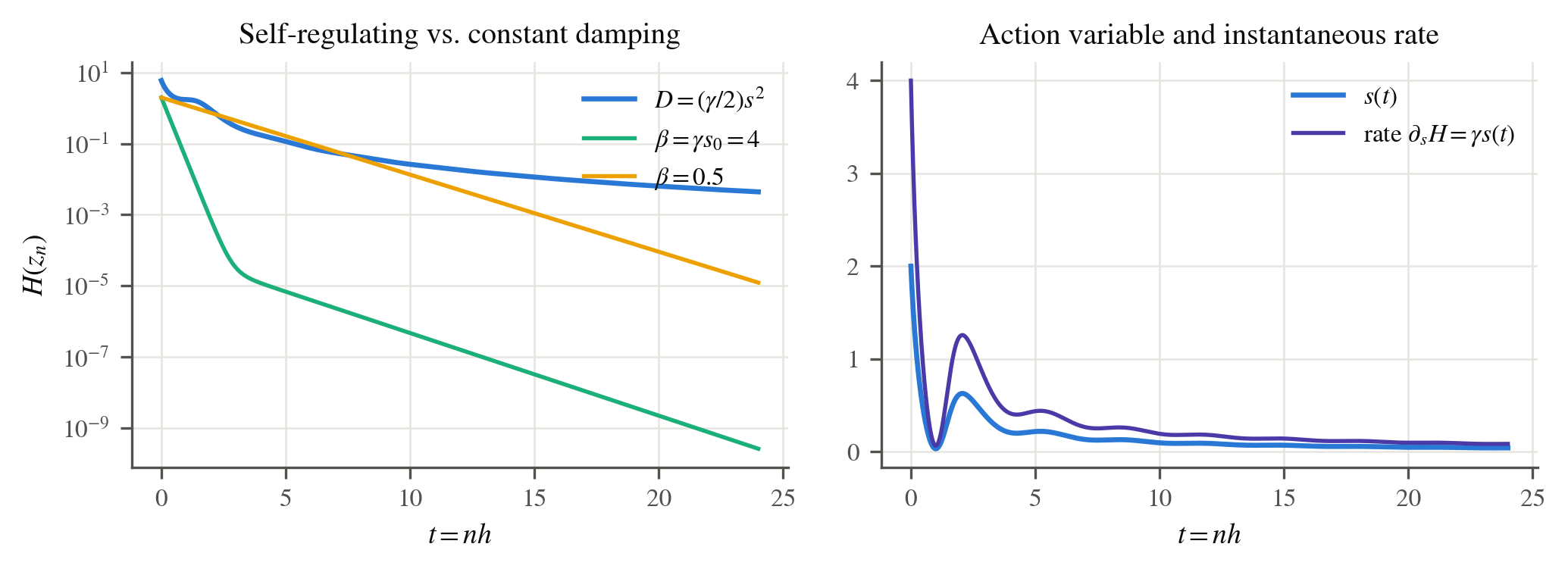}
    \caption{Self-regulating nonlinear dissipation $D=(\gamma/2)s^2$ on a quadratic ($\lambda=1$, $\gamma=2$, $s_0=2$). \textbf{Left:} Hamiltonian decay vs.\ two constant-damping references. The nonlinear atom is aggressive early and coasts late, without any schedule. \textbf{Right:} the action variable $s(t)$ and the instantaneous conformal rate $\partial_s H = \gamma s(t)$.}
    \label{fig:nonlinear_s2}
\end{figure}

\section{Computation}
\label{sec:computation}

The experiments below play two distinct roles. The deterministic smooth benchmarks are the closest to \cref{thm:rate}. They instantiate autonomous contact splittings (C-HB, C-Adapt, C-InvGrad) in settings where the smoothness and compact-region hypotheses of \cref{ass:reg} are plausible and the certificate mechanisms of \cref{sec:closed_form} are active, although we do not verify \cref{ass:cert} on these nonconvex landscapes, so the runs should be read as instantiations rather than certified deployments. The deep-learning experiments (Contact-Adam and contact-SGD variants with stochastic mini-batches, and the iteration-scheduled C-NAG damping of \cref{tab:classical_algos_td}) are \emph{stress tests of the contact design template}. The theorem covers neither stochastic gradients nor nonautonomous schedules, and these results are evidence that the template remains competitive under practical distortions, not direct validations of the certificate-transfer theory. Structure-preserving pre-symplectic, conformal-symplectic, and contact integrators have shown strong behavior on optimization tasks before~\cite{francca2020conformal,francca2021dissipative,vermeeren2019contact,bravetti2020numerical}. Algorithmic details, hyperparameter selection, and extended ablations are collected in \cref{app:algorithms,app:extended_numerics}.

\subsection{Deterministic Benchmarks}

We compare classical and contact Hamiltonian optimization algorithms on two standard deterministic benchmarks: the generalized Rosenbrock function~\cite{rosenbrock1960automatic} and the Wood function from the MGH test set~\cite{more1981testing}. The non-contact baselines are Gradient Descent (GD), Nesterov's accelerated gradient (NAG), and Relativistic gradient descent (RGD~\cite{francca2021dissipative}). The contact methods are C-HB, C-NAG, C-Adapt, and C-InvGrad, described in \cref{app:algorithms}. The figures and \cref{tab:deterministic_benchmarks} report the main comparison, with robustness studies deferred to \cref{app:extended_numerics}.

\paragraph{Rosenbrock.} We consider the optimization of the generalized Rosenbrock function
\[
    f(x) = \sum_{i=1}^{d-1} \Bigl[100(x_{i+1} - x_i^2)^2 + (1-x_i)^2\Bigr]
\]
with $d=100$. On this ill-conditioned chain objective, the state-adaptive contact methods converge faster (${\sim}5$K iterations) than the constant-damping contact baselines (${\sim}12$K iterations), GD, and NAG, visualized in \cref{fig:rosenbrock}. The geometric RGD baseline is competitive with the contact methods at the tuned starting point (\cref{tab:deterministic_benchmarks}). The separation between them appears in the fixed-tuning initial-condition robustness study of \cref{fig:initial_condition_robustness}.

\begin{figure}[ht]
    \centering
    \includegraphics[width=0.9\textwidth]{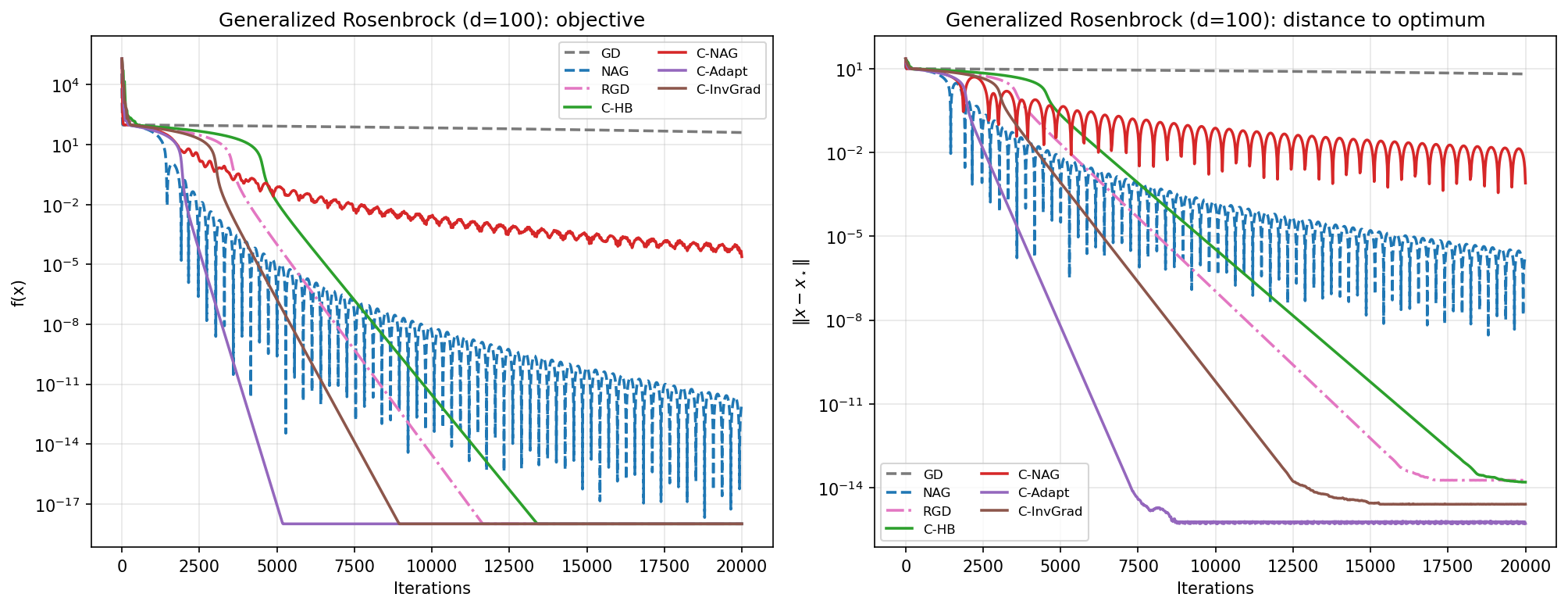}
    \caption{Comparison of contact Hamiltonian algorithms with constant damping and state-adaptive damping on the Rosenbrock function. The state-adaptive damping algorithm converges significantly faster than the constant damping algorithm.}
    \label{fig:rosenbrock}
\end{figure}

\paragraph{Wood function.} We consider the standard four-variable Wood function
\[
    \begin{aligned}
        f(x) ={}& 100(x_1^2 - x_2)^2 + (x_1 - 1)^2 + (x_3 - 1)^2 + 90(x_3^2 - x_4)^2 \\
        &+ 10.1\qty[(x_2 - 1)^2 + (x_4 - 1)^2] + 19.8(x_2 - 1)(x_4 - 1),
    \end{aligned}
\]
so $d=4$. This standard nonconvex test problem gives a low-dimensional check that the same state-dependent damping mechanism remains effective away from the Rosenbrock geometry. Once again, state-adaptive methods and RGD converge faster (${\sim}5$K iterations) than GD or NAG (${\sim}17$K iterations) whose distance to the minimizer remains above machine precision, visualized in \cref{fig:wood}.

\begin{figure}[ht]
    \centering
    \includegraphics[width=0.9\textwidth]{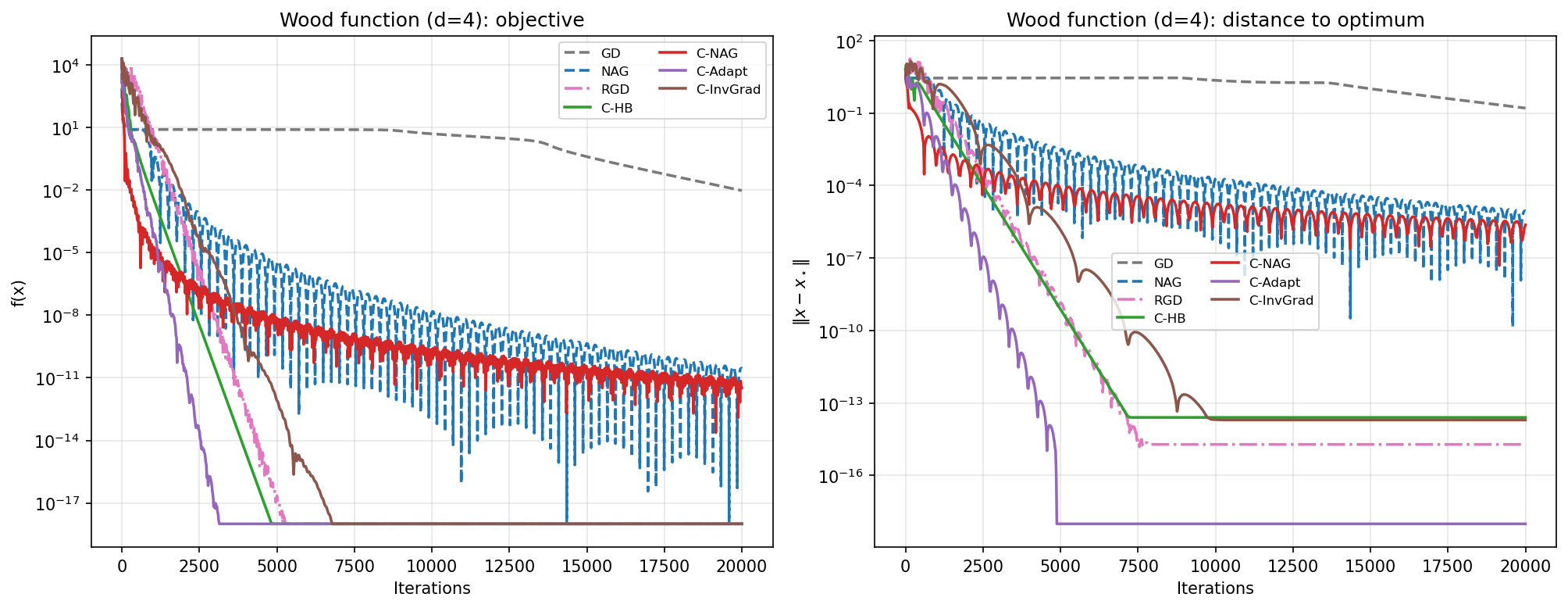}
    \caption{Comparison of contact Hamiltonian algorithms with constant damping and state-adaptive damping on the Wood function. The state-adaptive damping algorithm converges significantly faster than the constant damping algorithm.}
    \label{fig:wood}
\end{figure}

\begin{table}[ht]
    \centering
    \small
    \renewcommand{\arraystretch}{1.15}
    \begin{tabular}{lcccc}
        \toprule
        & \multicolumn{2}{c}{Generalized Rosenbrock ($20000$ iterations)} & \multicolumn{2}{c}{Wood ($20000$ iterations)} \\
        Method & $f_T$ & $\|x_T-x_\star\|$ & $f_T$ & $\|x_T-x_\star\|$ \\
        \midrule
        GD & $4.05 \times 10^{1}$ & $6.41$ & $9.34 \times 10^{-3}$ & $1.61 \times 10^{-1}$ \\
        NAG & $1.05 \times 10^{-12}$ & $2.05 \times 10^{-6}$ & $2.75 \times 10^{-11}$ & $8.74 \times 10^{-6}$ \\
        RGD & $1.21 \times 10^{-28}$ & $1.86 \times 10^{-14}$ & $1.84 \times 10^{-30}$ & $1.94 \times 10^{-15}$ \\
        \midrule
        C-HB & $7.99 \times 10^{-29}$ & $1.58 \times 10^{-14}$ & $2.38 \times 10^{-28}$ & $2.54 \times 10^{-14}$ \\
        C-NAG & $2.43 \times 10^{-5}$ & $8.02 \times 10^{-4}$ & $3.28 \times 10^{-12}$ & $2.36 \times 10^{-6}$ \\
        C-Adapt & $7.47 \times 10^{-30}$ & $5.21 \times 10^{-16}$ & $0$ & $0$ \\
        C-InvGrad & $7.85 \times 10^{-30}$ & $2.57 \times 10^{-15}$ & $1.48 \times 10^{-28}$ & $1.99 \times 10^{-14}$ \\
        \bottomrule
    \end{tabular}
    \caption{Final deterministic-benchmark statistics. Entries report the final objective and the final distance to the known minimizer after $20000$ iterations on generalized Rosenbrock ($d=100$) and Wood ($d=4$). All tabulated values, including the printed zeros, should be read as floating-point residuals rather than exact symbolic convergence certificates.}
    \label{tab:deterministic_benchmarks}
\end{table}

All entries in \cref{tab:deterministic_benchmarks} should be interpreted at floating-point accuracy. The sub-$10^{-12}$ values and printed zeros indicate that the iterates have reached the numerical minimizer in machine precision, not that the discrete method carries an exact symbolic zero-error certificate. The tuned hyperparameters, Hamiltonian ablations, and robustness studies are reported in \cref{app:algorithms,app:extended_numerics}. The fixed-tuning robustness study in \cref{fig:initial_condition_robustness} is especially stark. Over $100$ Rosenbrock-100 random sign starts, with parameters tuned at the original starting point, C-InvGrad reaches numerical precision on at least $90\%$ of starts, and C-Adapt has a machine-precision median with a small outlier tail. Tuned RGD, in contrast, has median objective gap $7.8\times10^{5}$ and median distance $14.5$.

\subsection{Deep Learning}
We next evaluate the contact variants in two learning regimes that stress different parts of an optimizer. The first is a full-batch physics-informed neural network (PINN) for the viscous Burgers equation~\cite{burgers1948mathematical,raissi2019physics}, where the loss couples PDE residual, initial condition, and boundary terms. The second is mini-batch training of a ResNet-18 architecture~\cite{he2016deep} on CIFAR-10~\cite{krizhevsky2009learning}, a standard image-classification benchmark with stochastic gradients. Together these tasks test whether the contact modifications remain competitive beyond low-dimensional deterministic objectives. We summarize the final deep-learning statistics in \cref{tab:deep_learning_summary}.

\paragraph{Burgers PINN.} The Burgers equation is given by the PDE $\partial_t u + u \partial_x u = \nu \partial_{xx} u$ for viscosity $\nu > 0$. We train a PINN on the domain $[0,1] \times [0,1]$ with periodic boundary conditions and initial condition $u(x,0) = -\sin(\pi x)$. The final full-batch comparison uses Adam, AdamW, SGD with momentum, L-BFGS, and two contact-SGD variants. In \cref{fig:deep_learning} we report the full PINN loss against both cumulative gradient evaluations and wall-clock time, so that multi-evaluation methods such as leapfrog splitting and L-BFGS line search are compared on fair computational axes. We observe that contact methods behave more smoothly compared to Adam and AdamW, and, while less competitive than the second-order L-BFGS, the contact variants reach losses of $3\times 10^{-4}$ compared to $10^{-3}$ for classical Adam variants, for similar gradient evaluation budgets. The experimental setup and hyperparameter-selection details are reported in \cref{app:algorithms}.

\paragraph{ResNet-18 on CIFAR-10.} CIFAR-10 contains 60,000 color images across 10 classes, and ResNet-18 provides a compact residual-network test case for mini-batch image classification. We compare the contact-Adam variants with Adam, AdamW, and SGD with momentum over 200 epochs. In the right panel of \cref{fig:deep_learning}, AdamW reaches high test accuracy early, while SGD+M and the contact variants finish higher in the reported run over three seeds, with final test accuracies around $95\%$. This gives a complementary stochastic-gradient robustness check of the proposed contact algorithms. Experimental setup and hyperparameter-selection details are reported in \cref{app:algorithms}.

\begin{figure}
    \centering
    \begin{subfigure}[t]{0.32\textwidth}
        \centering
        \includegraphics[width=\linewidth]{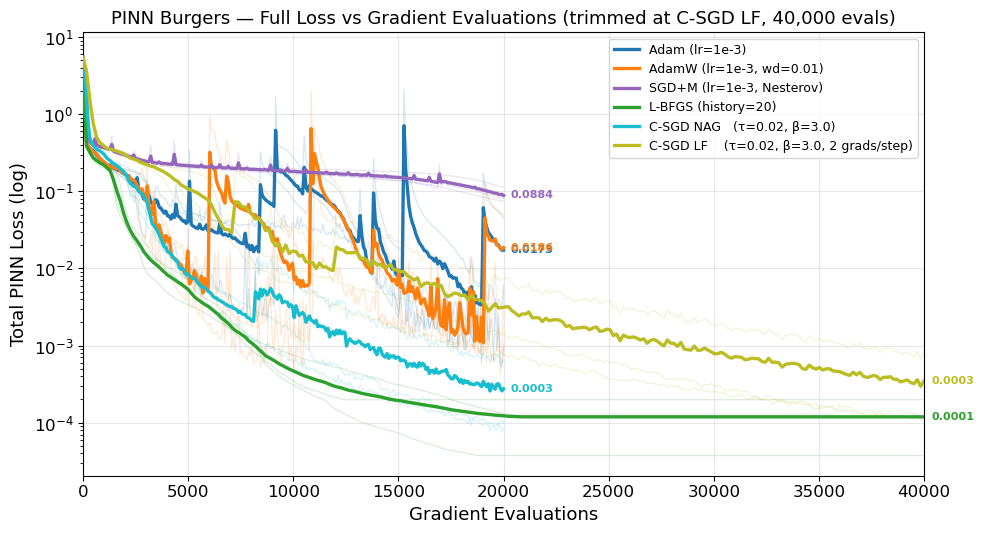}
        \caption{Burgers PINN full loss versus gradient evaluations.}
    \end{subfigure}\hfill
    \begin{subfigure}[t]{0.32\textwidth}
        \centering
        \includegraphics[width=\linewidth]{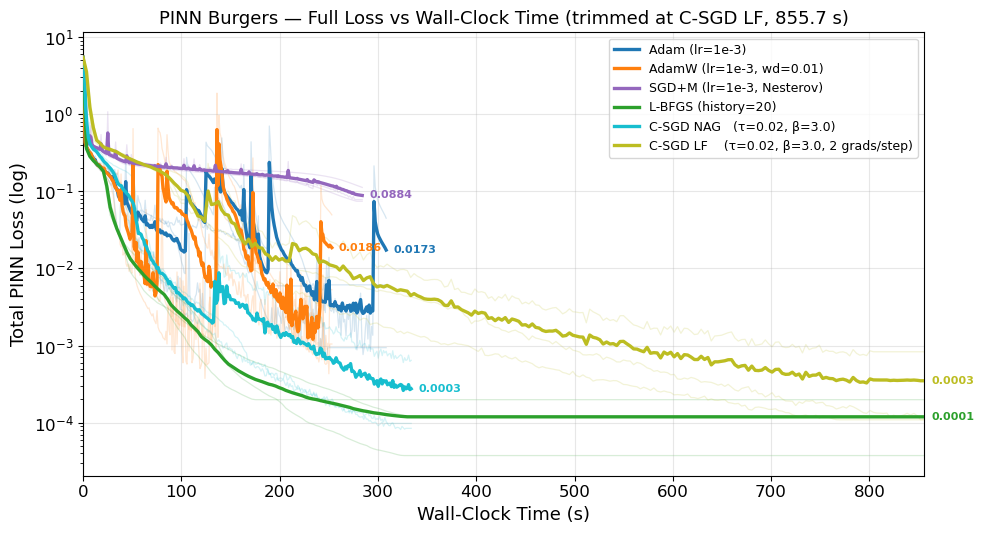}
        \caption{Burgers PINN full loss versus wall-clock time.}
    \end{subfigure}\hfill
    \begin{subfigure}[t]{0.32\textwidth}
        \centering
        \includegraphics[width=\linewidth]{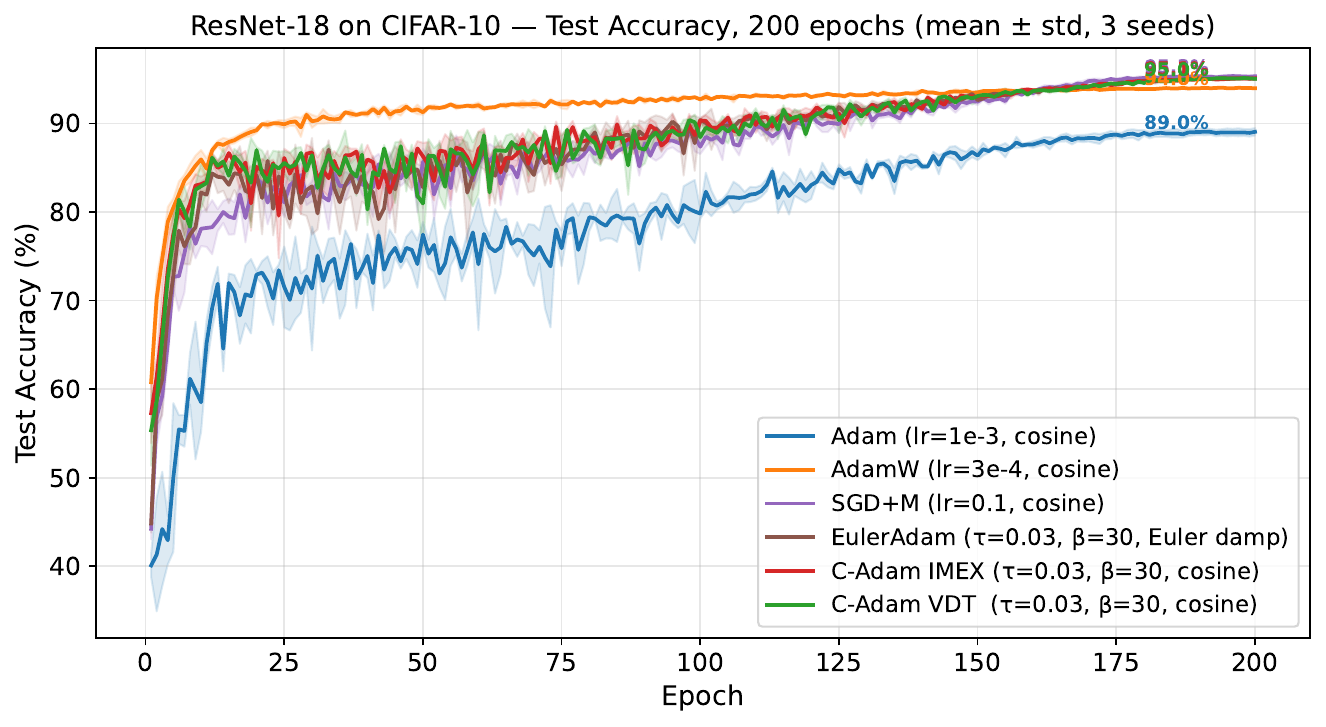}
        \caption{ResNet-18 on CIFAR-10 test accuracy.}
    \end{subfigure}
    \caption{Deep-learning optimization comparisons across full-batch and mini-batch regimes. On the Burgers PINN task we report the full objective against both gradient evaluations and wall-clock time, while the CIFAR-10 panel shows test accuracy over training.}
    \label{fig:deep_learning}
\end{figure}

\begin{table}[t]
    \centering
    \small
    \renewcommand{\arraystretch}{1.15}
    \begin{tabular}{lccccc}
        \toprule
        & \multicolumn{3}{c}{Burgers PINN} & \multicolumn{2}{c}{ResNet-18 on CIFAR-10} \\
        \cmidrule(lr){2-4} \cmidrule(lr){5-6}
        Method & Best loss & Final loss & Grad evals & Best acc. & Final acc. \\
        \midrule
        Adam & $1.22 \times 10^{-3}$ & $1.73 \times 10^{-2}$ & 20,000 & 89.12\% & 89.05\% \\
        AdamW & $6.87 \times 10^{-4}$ & $1.86 \times 10^{-2}$ & 20,000 & 94.06\% & 93.98\% \\
        SGD+M & $8.81 \times 10^{-2}$ & $8.84 \times 10^{-2}$ & 20,000 & 95.40\% & 95.33\% \\
        L-BFGS & $1.19 \times 10^{-4}$ & $1.19 \times 10^{-4}$ & 123,040 & n/a & n/a \\
        C-SGD NAG & $2.04 \times 10^{-4}$ & $2.75 \times 10^{-4}$ & 20,000 & n/a & n/a \\
        C-SGD LF & $2.62 \times 10^{-4}$ & $3.49 \times 10^{-4}$ & 40,000 & n/a & n/a \\
        C-Adam IMEX & n/a & n/a & n/a & 95.11\% & 95.08\% \\
        C-Adam VDT & n/a & n/a & n/a & 95.15\% & 95.04\% \\
        \bottomrule
    \end{tabular}
    \caption{Final deep-learning benchmark summary. Burgers PINN entries report mean best loss, mean final loss, and total gradient evaluations for the full-budget run; CIFAR-10 entries report mean best and final test accuracy over three seeds. Entries marked n/a indicate methods not included in that task's final comparison roster.}
    \label{tab:deep_learning_summary}
\end{table}

\section{Discussion}
\label{sec:Discussion}
This paper develops the contact Hamiltonian formalism as a framework for geometric optimization~\cite{bravetti2023bregman,vermeeren2019contact}, allowing both analysis of existing algorithms and a template for designing new dissipative methods. The main result is not a blanket global acceleration theorem, but a Hamiltonian-certificate transfer principle stated under three named, independently checkable hypotheses. Whenever $H-H^\star$ controls the objective gap on a compact region (\cref{ass:cert}), an order-$r$ contact splitting transfers the corresponding finite-horizon decay envelope up to the modified-conformal-factor perturbation and BEA defect (\cref{thm:rate}). The quadratic heavy-ball analysis is a fully solvable demonstration of this machinery. Its projected dissipative-leapfrog spectrum agrees with the conformal-symplectic literature~\cite{francca2020conformal,francca2021dissipative}, while the augmented contact Hamiltonian supplies the sharp objective comparison needed to verify our certificate hypotheses (\cref{prop:quadratic}). For state-dependent damping on strongly convex objectives, the explicit non-Hamiltonian Lyapunov construction of \cref{lem:bregman_lyap} instead transfers through the auxiliary-shadowing result \cref{cor:aux_transfer}. In addition, the discrete contact-form multiplier of the chosen master splitting is exact (\cref{prop:exact_conformal}), so state-dependent damping error enters through quadrature and trajectory sampling rather than a defect in the constituent contact maps.

Within that scope, the same formalism acts as a template for designing optimization algorithms with state-dependent, nonlinear, and momentum-coupled damping, while leaving room for useful nonseparable contact Hamiltonians beyond the particular $H=K+V+D$ template emphasized here. This viewpoint is complementary to recent ODE and structure-preserving perspectives on optimization~\cite{su2016differential,wibisono2016variational,francca2020conformal,francca2021dissipative,maddison2018hamiltonian_descent}. Rather than embedding dissipation into an enlarged symplectic system, we use the intrinsic contact conformal factor as the geometric object that links continuous and discrete rate certificates.

\paragraph{Limitations.} The theorem is a finite-horizon, compact-region statement, and its objective-level conclusion is conditional on the Hamiltonian comparison hypothesis. Outside the quadratic family verified here, supplying that comparison remains problem-specific work. Auxiliary Lyapunov functionals provide a second finite-horizon shadowing route but do not inherit the conformal decay identity. As in other backward-error analyses of structure-preserving discretizations, the result controls a modified-flow certificate over the time window and should not be read as a separate fixed-point or minimizer-preservation theorem for the implemented optimizer. Time-dependent damping laws are covered geometrically through the autonomous lift of \cref{prop:lift}, but its certificate hypotheses require separate verification and its compactness region must remain away from the $t=0$ singularity. The deep-learning experiments involve stochastic gradients and adaptive-moment estimates that the deterministic theory does not certify.

\paragraph{Future directions.} We see three natural extensions: (1) determining optimal choices of kinetic and dissipation atoms for particular objective classes, for which the closed-form quadratic analysis of \cref{sec:quadratic} provides the solvable base case; (2) extending the framework to stochastic optimization algorithms, which requires a stochastic contact Hamiltonian formalism that is still in its infancy in the mathematical physics literature \cite{wei2021formulation,zhan2023numerical}; and (3) Bregman and mirror-descent kinetics via contact changes of variables~\cite{bravetti2023bregman}, which fit the present framework as a coordinate change before splitting.

\section{Acknowledgements}
I am sincerely thankful to my advisors, Soledad Villar and Mauro Maggioni, at Johns Hopkins University for their guidance and support throughout this work. I am grateful to Ian McPherson who originally introduced me to the general geometric optimization literature, and Ben Grimmer and Ren\'{e} Vidal for insightful discussions on geometric optimization and pointing out relevant literature I would have otherwise missed. Any errors are my own.

\bibliography{main}
\bibliographystyle{unsrtnat}

\newpage

\appendix

\section{Contact Hamiltonian Dynamics}
\label{app:contact_dynamics}

For standard references on contact geometry and contact Hamiltonian dynamics, see \cite{arnold2013mechanics,krieg_michorl1997convenient,banyaga2013structure,guenther1996herglotz,olver1995equivalence}. We give a brief review of the relevant definitions and properties here for completeness.

A contact manifold is a smooth $(2n+1)$-dimensional manifold $\mcM$ equipped with a contact form $\alpha$ satisfying the non-degeneracy condition $\alpha \wedge (d\alpha)^n \neq 0$. The contact form $\alpha$ defines a hyperplane distribution $\xi = \ker \alpha$ and a Reeb vector field $R$ satisfying $\alpha(R) = 1$ and $d\alpha(R, \cdot) = 0$. Given a contact Hamiltonian $H: \mcM \to \R$, and using the sign convention of the main text, the associated contact Hamiltonian vector field $X_H$ is defined by the equations
\begin{equation}
    \alpha(X_H) = -H, \qquad d\alpha(X_H, \cdot) = dH - (R(H))\alpha.
\end{equation}
The flow generated by $X_H$ is a contactomorphism, i.e. it preserves the contact structure $\xi$ and the contact form $\alpha$ up to a conformal factor,
\begin{equation}
    (\Phi_{X_H}^t)^*\alpha
    = \exp\!\Bigl(-\int_0^t R(H)(\Phi_{X_H}^\tau z)\,d\tau\Bigr)\alpha.
\end{equation}

Darboux's theorem guarantees that locally, one can always find coordinates $(x,p,s)$ on $J^1(\R^n)$ such that the contact Hamiltonian equations take the form
\begin{equation}
    \dot{x} = \nabla_p H, \qquad \dot{p} = -\nabla_x H - p \frac{\partial H}{\partial s}, \qquad \dot{s} = p^T \nabla_p H - H.
\end{equation}
The associated contact form in Darboux coordinates is given by $\alpha = ds - p^T dx$, and the Reeb field is $R=\partial_s$. Thus the infinitesimal conformal rate of the contact form is $-\partial_s H$, while the Hamiltonian itself satisfies $\dot H=-(\partial_sH)H$. For $H=\gamma s$ with $\gamma>0$ one obtains $\dot H=-\gamma H$ and $(\Phi_{X_H}^t)^*\alpha=e^{-\gamma t}\alpha$.

Jet spaces have a natural contact structure given by the contact form $\alpha = ds - p^T dx$, and thus any smooth function $H: J^1(\R^n) \to \R$ defines a contact Hamiltonian system globally on $J^1(\R^n)$ with the associated equations of motion given by the above equations. This is the setting we work with in this paper, which is natural for optimization problems where the parameters are given by $x$, the momentum is given by $p$, and the auxiliary `dissipation' or `acceleration' variable is given by $s$. The jet space $J^1(\R^n)$ can be thought of as an extended phase space for optimization, where the dynamics of the parameters $x$ are coupled to the dynamics of the momentum $p$ and the auxiliary variable $s$ through the contact Hamiltonian equations.

The finite-dimensional construction is not limited to parameters in $\R^n$. On a smooth parameter manifold one may work locally in contact Darboux charts, with global formulations subject to the topology and chosen contact bundle. Extending the framework to infinite-dimensional function spaces requires a separate functional-analytic contact structure, and a Darboux theorem is not automatic in that setting, so such extensions lie beyond the scope of the present paper.

\section{Sub-flow Catalogue: Details}
\label{app:catalogue}

This appendix records two derivations deferred from \cref{sec:subflows}.

\paragraph{The incoming-point frozen shear is not an exact contact map.} The variable-metric Hamiltonian $K(x,p)=\tfrac12p^\top M(x)^{-1}p$ itself presents no geometric obstruction: because $\partial_sK=0$, its exact geodesic flow~\eqref{eq:K_metric_geodesic} is an autonomous strict contact flow. The distinction is computational: that flow is generally not the explicit shear~\eqref{eq:K_metric}. If $M=M(x)$ is evaluated at the incoming point of each step and then inserted into that shear, as in the implemented Contact-Newton variant with $M=\nabla^2f(x)$, a direct computation gives
\[
    \psi^*\alpha \;=\; \alpha \;-\; \tfrac{\tau}{2}\,p^\top \partial_{x_j}\!\bigl(M(x)^{-1}\bigr)\,p\;dx_j ,
\]
an $O(\tau\,\|\partial_x M\|)$ defect. The extra term is not proportional to $\alpha$, so this particular sub-step is not conformal for \emph{any} factor. For this reason the implemented variant is identified as framework-motivated in \cref{tab:classical_algos} rather than as an instance of the exact-contact hypotheses. When the preconditioner is instead an external state frozen during the step (the Adam EMA buffer, the L-BFGS pair history), no defect arises. The map is an exact strict contactomorphism at each step, and only the nonautonomy of the step sequence must be tracked separately, as noted in \cref{sec:subflows}. Alternatively, an exact geodesic solve or a symplectic discretization equipped with its generating-function contact lift avoids this defect while retaining the autonomous variable-metric Hamiltonian; analyzing such an inner approximation requires extending the exact-sub-flow BCH argument of \cref{app:bch}.

\paragraph{Momentum-dependent damping: derivation of the closed form~\eqref{eq:D_psq}.} For $D_{p} = \alpha\|p\|^2 s$, polynomial in $p$ of degree $2$ with nontrivial $s$-dependence, the contact Hamiltonian equations~\eqref{eq:contact_eom} read
\[
    \dot x = 2\alpha s\,p,\qquad
    \dot p = -\alpha\|p\|^2\,p,\qquad
    \dot s = \alpha\|p\|^2\,s.
\]
Two conserved quantities are immediate: $p_i\,s$ is constant for each $i$, since $\frac{\dx}{\dx t}(p_i s) = \dot p_i s + p_i \dot s = 0$, and $w = \|p\|^2$ satisfies the Riccati equation $\dot w = -2\alpha w^2$, solved by $w(\tau) = w_0/(1+2\alpha w_0\tau)$. Setting $c = 1 + 2\alpha w_0\tau$ gives the momentum and action updates $p \mapsto p/\sqrt c$ and $s \mapsto s\sqrt c$ of~\eqref{eq:D_psq}. The $x$-update is linear in $\tau$ because $\dot x = 2\alpha s p$ is constant along the sub-flow (each $p_i s$ is conserved).

\section{\texorpdfstring{Backward Error Analysis for Master $H$ Splittings}{Backward Error Analysis for Master H Splittings}}
\label{app:bch}

This appendix collects the formal backward-error input used in \cref{thm:rate} (packaged as \cref{ass:bea}), specialized to the master Hamiltonian $H = K(p) + V(x) + d(x)\,s$ with the splitting sub-flows of \cref{sec:subflows}. The rate-transfer theorem uses only this specialized BEA input. The broader contact Lie-algebra calculus, including the strict/prolonged generator classes and the local universality theorem used to motivate the design catalogue, is developed in~\cite{kevrekidis2026local}.

Contact Hamiltonians form a Lie algebra under the contact-Jacobi bracket $\qty{\cdot,\cdot}_c$,
\begin{equation}\label{eq:cjbracket}
    \qty{A,B}_c \;=\; \{A,B\} + A\,\partial_s B - B\,\partial_s A + p\,(\partial_s A\, \partial_p B
- \partial_s B\, \partial_p A),
\end{equation}
where $\{\cdot,\cdot\}$ is the standard Poisson bracket on $T^*\R^n$. With the vector-field convention of \eqref{eq:contact_eom}, this bracket satisfies $X_{\qty{A,B}_c}=-[X_A,X_B]$. Composition of contact flows obeys the corresponding Baker--Campbell--Hausdorff (BCH) formula, which yields the modified-Hamiltonian generator of a splitting integrator.

\begin{lemma}[Contact BCH formal series]\label{lem:contact_bch}
Let $H_1,\dots,H_m \in C^\infty(J^1(\R^n))$ and let $\Phi_{H_i}^h$ denote the time-$h$ contact flow of $H_i$. Then there exists a formal contact Hamiltonian series $\tH(h) \in C^\infty(J^1(\R^n))[[h]]$ such that
\begin{equation}\label{eq:bch_formal}
    \Phi_{H_m}^{a_m h} \circ \cdots \circ \Phi_{H_1}^{a_1 h}
    \;=\; \exp\!\bigl(h\,\tH(h)\bigr)
\end{equation}
formally in $h$, where the coefficients of $\tH(h)$ are explicit polynomials in nested contact-Jacobi brackets~\eqref{eq:cjbracket} of $H_1,\dots,H_m$ with rational coefficients in the $a_i$. For an order-$r$ symmetric composition (in the sense of cancelling all terms of degree $<r$ in $h$), one has $\tH(h) = H + h^r H_r + O(h^{r+2})$ where $H = \sum_i a_i H_i$ at the BCH-leading order and $H_r$ is a fixed polynomial in nested $\qty{\cdot,\cdot}_c$-brackets of the $H_i$.
\end{lemma}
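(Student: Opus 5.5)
The plan is to transport the classical Lie-algebraic BCH argument into the contact setting, using the anti-homomorphism $A\mapsto X_A$ from contact Hamiltonians to vector fields. First, I check (or quote from~\cite{kevrekidis2026local}) that $\qty{\cdot,\cdot}_c$ in~\eqref{eq:cjbracket} is a Lie bracket on $C^\infty(J^1(\R^n))$ and that $X_{\qty{A,B}_c}=-[X_A,X_B]$ holds with the sign convention of~\eqref{eq:contact_eom}. The bracket is bilinear and antisymmetric by inspection. The Jacobi identity and the intertwining relation both follow from the fact that $A\mapsto X_A$ is a linear bijection onto infinitesimal contactomorphisms of $\alpha=ds-p^Tdx$, with inverse $X\mapsto-\alpha(X)$. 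The vector-field bracket of two infinitesimal contactomorphisms is again one, so I will define the bracket by pulling back $-[X_A,X_B]$ and verify in coordinates that it equals~\eqref{eq:cjbracket}. That verification is a routine computation of $\alpha([X_A,X_B])$.

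Next, I work at the level of Lie derivatives acting on functions. Each flow satisfies $g\circ\Phi^{t}_{H_i}=\exp(t\mathcal L_{X_{H_i}})g$ as a formal series in $t$ (Taylor/Lie series). Pullback reverses the order of a composition, so the composition $\Phi_{H_m}^{a_mh}\circ\cdots\circ\Phi_{H_1}^{a_1h}$ acts on functions by the product $\exp(a_1h\mathcal L_1)\cdots\exp(a_mh\mathcal L_m)$. The classical BCH theorem in the free Lie algebra generated by the $\mathcal L_i$ writes this product as $\exp(h Z(h))$. Here $Z(h)$ is a formal series of nested commutators of the $\mathcal L_i$, with rational coefficients polynomial in the $a_i$. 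The map $A\mapsto\mathcal L_{X_A}$ is a Lie anti-homomorphism from $(C^\infty,\qty{\cdot,\cdot}_c)$, and it is injective. I can therefore rewrite every nested commutator as $\pm\mathcal L_{X_B}$, where $B$ is the corresponding nested contact-Jacobi bracket. The anti-homomorphism can be absorbed by reversing bracket order, or equivalently by passing to the opposite algebra. Collecting terms gives $Z(h)=\mathcal L_{X_{\tH(h)}}$ with $\tH(h)\in C^\infty[[h]]$, which is~\eqref{eq:bch_formal}. The leading term is $\sum_i a_iH_i$ because the degree-one part of BCH is the sum of the generators.

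For the order statement, I expand $\tH(h)=\sum_{k\ge0}h^k\tH_k$. The order-$r$ hypothesis says that the composition agrees with $\Phi_H^h$ through $O(h^{r})$. Since exp/log are bijective between formal series, this forces $\tH_k=0$ for $1\le k<r$, so $\tH=H+h^rH_r+\cdots$. Here $H_r$ is the stated bracket polynomial read off from the degree-$(r+1)$ BCH term. For a symmetric composition, the composition with $h\to-h$ is the inverse map. This gives $\tH(-h)=\tH(h)$, so only even powers appear, and after $h^r$ (with $r$ even) the next term is $O(h^{r+2})$.

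The main obstacle is the bookkeeping in the anti-homomorphism and the composition-order reversal. The signs $X_{\{A,B\}_c}=-[X_A,X_B]$ and the reversal of order under pullback must combine consistently, so that the generator is a genuine contact Hamiltonian series with the claimed leading term. I would resolve this by proving the case $m=2$ to second order explicitly (checking that the $h^2$ coefficient is $\tfrac{a_1a_2}{2}\qty{H_1,H_2}_c$ up to the fixed sign) and then invoking the free-Lie-algebra universality of BCH for the general case.
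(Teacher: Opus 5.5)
Your proposal is correct and follows essentially the same route as the paper, whose proof is a one-line appeal to the abstract Lie-algebraic BCH theorem once the contact-Jacobi bracket is known to be a Lie bracket. You make explicit what the paper leaves implicit: the intertwining $X_{\{A,B\}_c}=-[X_A,X_B]$ combined with the order reversal under pullback, which lets the BCH series for the operators $\exp(a_ih\,\mathcal L_{X_{H_i}})$ be rewritten as a contact Hamiltonian series. You also supply the self-adjointness argument $\tH(-h)=\tH(h)$, which justifies the jump from $h^r$ to $O(h^{r+2})$ for symmetric compositions.
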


The lemma is the contact specialization of the abstract Lie-algebraic BCH theorem~\cite{hairer2006geometric,reich1999backward}, since any Lie-bracket BCH expansion applies verbatim once the Lie bracket is fixed, and~\eqref{eq:cjbracket} verifies the Jacobi identity.

\begin{exmp}[Strang on the master $H$]\label{ex:strang_bch}
For the master Hamiltonian~\eqref{eq:master} and the Strang composition~\eqref{eq:strang}, \cref{lem:contact_bch} gives, through second order,
\begin{equation}\label{eq:strang_modH}
    \begin{aligned}
    \tH(h) \;=\; H \;+\; h^2\Big(&
        -\frac{1}{24}\qty{V,\qty{V,K}_c}_c
        +\frac{1}{12}\qty{K,\qty{K,V}_c}_c \\
        &-\frac{1}{24}\qty{D,\qty{D,K+V}_c}_c
        +\frac{1}{12}\qty{K+V,\qty{K+V,D}_c}_c\Big)
        \;+\; O(h^4).
    \end{aligned}
\end{equation}
The first line is the correction from the inner Verlet composition, while the second comes from the outer damping composition. For $D=\gamma s$, every bracket correction beyond the leading $\gamma s$ is strict: if $G$ is independent of $s$, then
\[
    \qty{\gamma s,G}_c=\gamma\bigl(p^T\nabla_pG-G\bigr),
\]
which is again independent of $s$, and strict Hamiltonians are closed under the contact-Jacobi bracket. Hence the formal modified Hamiltonian satisfies $\partial_s\tH(h)=\gamma$ to all orders. This agrees with the exact multiplier in \cref{prop:exact_conformal}. The modified Hamiltonian itself, and therefore its values along the numerical orbit, can still differ from the original $H$ by $O(h^2)$.
\end{exmp}

The formal series~\eqref{eq:bch_formal} need not converge. For finitely smooth data, one truncates it at a fixed order and obtains the usual algebraic backward-error defect. Analyticity permits optimal truncation at an order $N \asymp 1/h$, upgrading the same defect to one that is exponentially small in $1/h$.

\begin{lemma}[Smooth and analytic BEA shadowing]\label{lem:smooth_analytic_bea}
Let $U \subset J^1(\R^n)$ be compact, let $H = K(p) + V(x) + d(x)\,s$, and let $\Psi^{(r)}_h$ be an order-$r$ contact splitting integrator for $H$ obtained by symmetric composition of the sub-flows of \cref{sec:subflows}. Fix a BEA truncation order $q\ge r$ and assume that $K,V,d$ and the sub-flows have enough bounded $C^{q+2}$ norms on a neighborhood of $U$ to carry out the BCH expansion through order $q$. Then for sufficiently small $h>0$ there exist constants $C_T>0$ and a modified contact Hamiltonian $\tH_{h,q}$ on a smaller compact $U' \subset U$ satisfying
\begin{equation}\label{eq:smooth_bea}
    \tH_{h,q} \;=\; H + O(h^r) \qquad \text{uniformly in } C^1(U'),
\end{equation}
and such that the iterates $z_n = (\Psi^{(r)}_h)^n z_0$ remain in $U'$ on $[0,T]$ and obey the finite-horizon shadowing estimate
\begin{equation}\label{eq:shadow}
    \bigl\| (\Psi^{(r)}_h)^n - \Phi^{nh}_{\tH_{h,q}} \bigr\|_{C^1(U')}
    \;\le\; C_T\,h^q,
    \qquad nh \le T.
\end{equation}
If, in addition, $K,V,d$ and the sub-flows are real analytic and admit holomorphic extensions to a complex neighborhood of $U$, then the truncation may be chosen optimally and the estimate improves to
\begin{equation}\label{eq:analytic_shadow}
    \bigl\| (\Psi^{(r)}_h)^n - \Phi^{nh}_{\tH_{h,N(h)}} \bigr\|_{C^1(U')}
    \;\le\; C_T\,e^{-c/h},
    \qquad nh \le T,
\end{equation}
for constants $C_T,c>0$ independent of $h$ and $n$.
\end{lemma}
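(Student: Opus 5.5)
The plan follows the classical backward-error route of Hairer--Lubich--Wanner, transplanted to contact vector fields through \cref{lem:contact_bch}. There are four steps: build a truncated modified field, estimate the one-step local defect, propagate it over the horizon, and upgrade to the analytic bound.

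\textbf{Step 1: truncated modified Hamiltonian.} Truncate the formal series at order $q$:
\[
\tH_{h,q} := H + \sum_{j=r}^{q-1} h^j H_j .
\]
Each $H_j$ is a nested contact-Jacobi bracket of $K$, $V$, $d\,s$, and therefore involves at most $j+1$ derivatives of the data. The $C^{q+2}$ hypothesis then gives $C^1(U')$ bounds on every $H_j$, uniform in $h$, which yields \eqref{eq:smooth_bea}.

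\textbf{Step 2: local defect.} Since $X_{\{A,B\}_c}=-[X_A,X_B]$, the contact BCH identity of \cref{lem:contact_bch} is the ordinary vector-field BCH identity for the fields $X_{H_i}$. Taylor-expand $\Psi^{(r)}_h$ and $\Phi^h_{\tH_{h,q}}$ in $h$ on a neighbourhood of $U$, using Lie-series remainders controlled by the $C^{q+2}$ norms. The coefficients agree through order $h^q$, so
\[
\|\Psi^{(r)}_h-\Phi^h_{\tH_{h,q}}\|_{C^1}\le Ch^{q+1}.
\]
One technical point is the choice of $U'$. I would take $U'$ to be a compact set with an $\varepsilon$-collar inside $U$, so that for small $h$ all intermediate sub-flow points remain in $U$. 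The sub-flows are explicit and bounded there, so this is routine.

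\textbf{Step 3: global propagation.} Telescope
\[
(\Psi^{(r)}_h)^n-\Phi^{nh}_{\tH}=\sum_{k}\Bigl[\Phi^{(n-k-1)h}_{\tH}\circ\Psi^{(r)}_h-\Phi^{(n-k-1)h}_{\tH}\circ\Phi^h_{\tH}\Bigr]\circ(\Psi^{(r)}_h)^k ,
\]
and bound each term by $e^{L T}Ch^{q+1}$. Here $L$ is the Lipschitz constant of $X_{\tH_{h,q}}$ on $U'$, which is $h$-uniformly bounded by Step 1. For the $C^1$ version, also propagate derivatives via the variational equation; this requires $C^2$ bounds on $X_{\tH}$, which follow from $C^{q+2}$ regularity. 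Summing $n\le T/h$ terms gives $C_T h^q$. Containment of the iterates in $U'$ follows by a bootstrap argument: the shadowing error is small and the modified trajectory stays in $U'$ by \cref{ass:reg}.

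\textbf{Step 4: analytic case.} I expect this to be the main obstacle. I would follow Benettin--Giorgilli / Hairer--Lubich: use Cauchy estimates on nested complex neighbourhoods to bound the $j$-th BCH coefficient by $C M^j j!$. Choosing $N(h)\asymp 1/(eMh)$ makes the truncated local defect $O(e^{-c/h})$, and Step 3 then gives \eqref{eq:analytic_shadow}.

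The delicate point is that contact-Jacobi brackets are not pure Poisson brackets. They contain the zero-order terms $A\partial_sB-B\partial_sA$ and the factor $p$, whose size grows on the complex neighbourhood. I would handle this by working with the vector fields $X_{H_i}$ rather than the Hamiltonians, applying the standard analytic estimates for compositions of flows of analytic vector fields. I would then recover $\tH$ from $X_{\tH}$ via $\tH=-\alpha(X_{\tH})$. This avoids bracket combinatorics entirely and keeps the modified field contact, because it is a truncation of a BCH series in the contact Lie algebra.
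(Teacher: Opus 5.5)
Your proposal is correct and follows the same route as the paper's sketch. You truncate the contact BCH series of \cref{lem:contact_bch} at order $q$, bound the algebraic one-step defect, propagate it over $nh\le T$ by a Gr\"onwall/telescoping argument, and in the analytic case use Cauchy estimates with optimal truncation $N(h)\asymp 1/h$ as in Hairer--Lubich--Wanner. Your vector-field formulation with $\tH=-\alpha(X_{\tH})$ is a clean way to carry out the analytic step, which the paper only asserts. The one point to tighten is containment: \cref{ass:reg} keeps the modified trajectory in $U$, not in your collared $U'$, so the bootstrap needs the reference trajectory to stay a fixed distance inside $U'$ (a hypothesis that the lemma, like the paper's sketch, leaves implicit).
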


\noindent\emph{Proof sketch.} Apply the Lie-algebraic BCH series of \cref{lem:contact_bch} term-by-term. With only finite smoothness, truncate after the fixed order $q$ supported by the available derivatives. The residual local defect is algebraic, and a Gr\"onwall comparison over $nh\le T$ gives~\eqref{eq:shadow}. With analytic data, the coefficients are bounded by the radius of holomorphic extension, so one can truncate at the optimal index $N\asymp 1/h$ and obtain the standard $e^{-c/h}$ tail~\eqref{eq:analytic_shadow}. The argument is the contact-Jacobi-bracket version of Hairer--Lubich--Wanner symplectic BEA~\cite{hairer2006geometric}. \hfill$\square$

The relevant consequence for optimization is that the conformal factor of the modified Hamiltonian tracks that of the original $H$, both pointwise and in cumulative integrated form along the iterates.

\begin{lemma}[Modified conformal factor and discrete tracking]\label{lem:mod_conformal}
Under the hypotheses of \cref{lem:smooth_analytic_bea}, the modified conformal factor $\tlam_{h,q} := \partial_s \tH_{h,q}$ satisfies
\begin{equation}\label{eq:mod_conformal_uniform}
    \tlam_{h,q} \;=\; \partial_s H + O(h^r)
    \qquad \text{uniformly on } U'.
\end{equation}
Moreover, write $\lambda_{\Psi}$ for the conformal exponent of $\Psi^{(r)}_h$, the scalar function defined by $(\Psi^{(r)}_h)^*\alpha = e^{\lambda_{\Psi}}\,\alpha$ as in \cref{sec:contact_opti}. The cumulative discrete conformal factor of $\Psi^{(r)}_h$, defined as
\[
    \sigma_h^n(z) := -\sum_{k=0}^{n-1} \lambda_{\Psi}\bigl((\Psi^{(r)}_h)^k z\bigr),
\]
satisfies
\begin{equation}\label{eq:disc_conformal_track}
    \sigma_h^n(z) \;=\; \int_0^{nh} \partial_s H\bigl(\Phi^\tau_H z\bigr)\,d\tau
    \;+\; O(h^r\,T) \;+\; \rho_{\mathrm{conf}}(h,T),
    \qquad nh \le T,
\end{equation}
where
\[
    \rho_{\mathrm{conf}}(h,T):=C_{\alpha,T}\,\delta_{\mathrm{BEA}}^{(1)}(h,T),
    \qquad
    \delta_{\mathrm{BEA}}^{(1)}(h,T):=
    \sup_{nh\le T}\bigl\|(\Psi_h^{(r)})^n-\Phi_{\tH_{h,q}}^{nh}\bigr\|_{C^1(U')}.
\]
Thus $\rho_{\mathrm{conf}}(h,T)=O_T(h^q)$ in the finite-smooth case and $\rho_{\mathrm{conf}}(h,T)=O(e^{-c/h})$ in the analytic case. This derivative-level observable defect is distinct from the certificate defect $\rho_{\mathrm{BEA}}$ in \cref{ass:bea}(iii).
\end{lemma}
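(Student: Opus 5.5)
The plan has two parts. First, the uniform estimate \eqref{eq:mod_conformal_uniform} follows from the BCH structure. Second, the cumulative tracking \eqref{eq:disc_conformal_track} is obtained by passing through the modified flow and comparing it with the original flow.

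\textbf{Uniform estimate.} By \cref{lem:contact_bch}, the truncated modified Hamiltonian has the form $\tH_{h,q}=H+\sum_{k=r}^{q}h^kH_k$. Each $H_k$ is a finite polynomial in nested contact-Jacobi brackets of $K$, $V$ and $d\,s$. Because each bracket \eqref{eq:cjbracket} is first order in each argument, $H_k$ involves at most $k+1$ derivatives of the data. Under the $C^{q+2}$ hypothesis of \cref{lem:smooth_analytic_bea}, $\partial_sH_k$ is therefore continuous, hence bounded on the compact set $U'$. Summing the terms gives $\partial_s\tH_{h,q}-\partial_sH=O(h^r)$ uniformly. In the analytic case with truncation at $N(h)$, Cauchy estimates on the holomorphic neighborhood give geometric bounds on $\sup_{U'}|\partial_sH_k|$. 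Since $h\le c/N$, the tail sum is again dominated by its leading $h^r$ term.

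\textbf{Step 1: modified flow.} For the cumulative statement, compare $\lambda_\Psi$ with the conformal exponent of the modified time-$h$ flow. The exact flow satisfies $(\Phi^{h}_{\tH_{h,q}})^*\alpha=\exp(-\int_0^h\tlam_{h,q}\circ\Phi^\tau_{\tH_{h,q}}\,d\tau)\,\alpha$, by the identity recorded in \cref{app:contact_dynamics}. Composition is multiplicative, so over $n$ steps the modified flow has cumulative exponent $\int_0^{nh}\tlam_{h,q}\circ\Phi^\tau_{\tH_{h,q}}z\,d\tau$. The splitting iterate satisfies $(\Psi^n)^*\alpha=e^{-\sigma_h^n}\alpha$ directly. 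Pulling $\alpha$ back and reading off the coefficient of $ds$ expresses each conformal exponent through the first derivatives of the map at $z$:
\[
e^{-\sigma^n_h(z)}=\partial_s\bigl(s\circ\Psi^n\bigr)-\bigl(p\circ\Psi^n\bigr)^T\partial_s\bigl(x\circ\Psi^n\bigr),
\]
and the same formula holds for $\Phi^{nh}_{\tH_{h,q}}$. This expression is Lipschitz in the $C^1$ distance between maps on $U'$, and both quantities are bounded away from zero on the horizon. Hence \eqref{eq:shadow} gives $|\sigma_h^n-\int_0^{nh}\tlam_{h,q}\circ\Phi^\tau_{\tH_{h,q}}\,d\tau|\le C_{\alpha,T}\,\delta^{(1)}_{\mathrm{BEA}}$, which is exactly $\rho_{\mathrm{conf}}$. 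This is why a $C^1$ rather than $C^0$ shadowing estimate is needed.

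\textbf{Step 2: original flow.} Next compare $\int_0^{nh}\tlam_{h,q}\circ\Phi^\tau_{\tH_{h,q}}\,d\tau$ with $\int_0^{nh}\partial_sH\circ\Phi^\tau_H\,d\tau$. Split the difference into two pieces. The first is the pointwise difference $\tlam_{h,q}-\partial_sH=O(h^r)$, integrated over time, which contributes $O(h^rT)$. The second is $\partial_sH$ evaluated along two different trajectories. \Cref{ass:bea}(i) gives $X_{\tH_{h,q}}-X_H=O(h^r)$ in $C^0$, since the vector field depends on first derivatives. A Gr\"onwall estimate on $U'$ then gives $\|\Phi^\tau_{\tH_{h,q}}z-\Phi^\tau_Hz\|\le C e^{L T}h^r\tau$. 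Multiplying by $\mathrm{Lip}(\partial_sH)$ and integrating bounds this piece by $O_T(h^r)$, which is absorbed into the $O(h^rT)$ term with a $T$-dependent constant.

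\textbf{Main obstacle.} I expect the hard part to be the derivative-level comparison in Step 1: making the conformal exponent a Lipschitz functional of the map in $C^1$, and keeping $e^{-\sigma}$ bounded away from zero. The second requirement holds because $\partial_sH$ is bounded on the compact $U'$ over the finite horizon. The uniform BCH bound is routine apart from bookkeeping of derivative orders.
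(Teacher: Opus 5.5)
Your proposal is correct and follows essentially the same route as the paper's sketch. You differentiate the truncated BCH series to get the pointwise bound on $\partial_s\tH_{h,q}$, convert the $C^1$ shadowing estimate into a bound on the difference of the two pullback multipliers (both bounded away from zero, so the logarithm is Lipschitz), and pass from the modified flow to the original one by Gr\"onwall plus the $O(h^r)$ pointwise estimate. Your identification of the multiplier as the $ds$-coefficient, $e^{-\sigma_h^n}=\partial_s(s\circ\Psi^n)-(p\circ\Psi^n)^T\partial_s(x\circ\Psi^n)$, just makes concrete the $C^1$-Lipschitz dependence that the paper asserts.
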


\noindent\emph{Proof sketch.} Estimate~\eqref{eq:mod_conformal_uniform} follows by differentiating the truncated modified Hamiltonian series. In the analytic case the same conclusion can also be read from Cauchy's estimate on a slightly smaller compact set. The two conformal maps satisfy
\[
    ((\Psi_h^{(r)})^n)^*\alpha=e^{-\sigma_h^n}\alpha,
    \qquad
    (\Phi_{\tH_{h,q}}^{nh})^*\alpha=
    \exp\!\left(-\int_0^{nh}\partial_s\tH_{h,q}\circ\Phi_{\tH_{h,q}}^\tau\,d\tau\right)\alpha.
\]
The $C^1$ shadowing estimate of \cref{lem:smooth_analytic_bea} controls the difference of these pullbacks. On the fixed compact set and finite horizon their positive multipliers are bounded away from zero, so the logarithm is Lipschitz and converts this pullback error into $\rho_{\mathrm{conf}}=C_{\alpha,T}\delta_{\mathrm{BEA}}^{(1)}$. Finally, \eqref{eq:mod_conformal_uniform} and the $O(h^r)$ finite-horizon closeness of the modified and original flows give
\[
    \int_0^{nh}\partial_s\tH_{h,q}\circ\Phi_{\tH_{h,q}}^\tau\,d\tau
    =\int_0^{nh}\partial_sH\circ\Phi_H^\tau\,d\tau+O(h^rT),
\]
which proves~\eqref{eq:disc_conformal_track}. \Cref{prop:exact_conformal} sharpens the statement for the master splittings. The sum defining $\sigma_h^n$ is an \emph{exact} quadrature of $\partial_s H$ along the numerical trajectory, so the $O(h^rT)$ term is entirely quadrature-and-trajectory error and vanishes identically for constant damping. \hfill$\square$

\Crefrange{lem:contact_bch}{lem:mod_conformal} give the standard finite-horizon BEA justification for \cref{ass:bea} for the master splittings, subject to the stated regularity, compact-containment, and derivative-level shadowing hypotheses. The rate-transfer theorem itself remains conditional on \cref{ass:bea}, and \cref{fig:conformal_scaling} separately confirms~\eqref{eq:disc_conformal_track} numerically at orders $r=2$ and $r=4$.

\section{Certificate Transfer: Deferred Proofs and Remarks}
\label{app:proofs}

This appendix collects the proofs deferred from \cref{sec:main}, together with three remarks that qualify the hypotheses.

\begin{proof}[Proof of \cref{thm:rate}]
\textup{(i)} is \cref{lem:mod_decay}. \textup{(ii)} By \cref{ass:bea}\textup{(i)}, $\partial_s\tH_{h,q} = \partial_s H + \delta_h$ with $\sup_{U'}|\delta_h| \le C h^r$. Moreover, since $\tH_{h,q} - H = O(h^r)$ in $C^1(U')$, the two contact vector fields~\eqref{eq:contact_eom} are $O(h^r)$-close in $C^0(U')$, so by Gr\"onwall the flows satisfy $\sup_{t\le T}\bigl\|\Phi^t_{\tH_{h,q}}z_0 - \Phi^t_H z_0\bigr\| = O(h^r)$, and hence, using the Lipschitz bound on $\partial_s H$ from \cref{ass:reg},
\[
    \int_0^t \partial_s\tH_{h,q}\circ\Phi^\tau_{\tH_{h,q}}\,d\tau
    \;=\; \int_0^t \partial_s H\circ\Phi^\tau_H\,d\tau \;+\; O(h^r\,t).
\]
Exponentiating, $\widetilde{\Rate}_{h,q}(t) \le \Rate_H(t)\,e^{Ch^r t} \le \Rate_H(t) + C_T h^r t$, the last step using $\Rate_H(t)\le 1$ from \cref{ass:cert}\textup{(ii)} and $e^u \le 1 + u e^{u_{\max}}$ on the bounded range $u \le C h^r T$. \textup{(iii)} Split
\[
\mcE(z_n) \le \underbrace{\bigl|\mcE(z_n) - \tE_{h,q}(z_n)\bigr|}_{\le\, C_{\mathrm{mod}} h^r \ \text{(\cref{ass:bea}(i))}} + \underbrace{\bigl|\tE_{h,q}(z_n) - \tE_{h,q}(\Phi^{nh}_{\tH_{h,q}}z_0)\bigr|}_{\le\, \rho_{\mathrm{BEA}} \ \text{(\cref{ass:bea}(ii),(iii))}} + \underbrace{\tE_{h,q}(\Phi^{nh}_{\tH_{h,q}}z_0)}_{\text{bounded by (i),(ii)}},
\]
and expand $\tE_{h,q}(z_0) \le \mcE(z_0) + C_{\mathrm{mod}}h^r$ in the last term. All $O(h^r)$ constants, including $C_0$ from \textup{(i)}, are collected into $C_{\mathrm{mod},T}$. The objective bound~\eqref{eq:objective_envelope} is then immediate from~\eqref{eq:objective_comparison}.
\end{proof}

\begin{proof}[Proof of \cref{cor:aux_transfer}]
By \cref{ass:bea}\textup{(ii)}, $\|z_n - \Phi^{nh}_{\tH_{h,q}} z_0\| \le \delta_{\mathrm{BEA}}(h,T)$. By \cref{ass:bea}\textup{(i)} the vector fields of $H$ and $\tH_{h,q}$ are $O(h^r)$-close in $C^0(U')$, so Gr\"onwall gives $\|\Phi^{nh}_{\tH_{h,q}} z_0 - \Phi^{nh}_H z_0\| \le C_T' h^r$. Hence $W(z_n) \le W(\Phi^{nh}_H z_0) + L_W(C_T'h^r + \delta_{\mathrm{BEA}})$, and the continuous decay bounds the first term by $e^{-\lambda_W nh}W(z_0)$.
\end{proof}

\begin{remark}[Sign-indefinite damping]\label{rem:sign_indef}
Item (ii) of \cref{ass:cert} licenses the additive form of the envelope perturbation in~\eqref{eq:envelope} because it guarantees $\Rate_H(t)\le 1$, so the exponential factor arising in the proof of \cref{thm:rate}\textup{(ii)} can be linearized additively. If $\partial_s H$ changes sign on $U$, $\Rate_H$ can exceed one and the term $C_T h^r\,nh$ must be replaced by the multiplicative factor $e^{C h^r nh}$.
\end{remark}

\begin{remark}[On the normalization $f^\star \ge 0$]\label{rem:gauge}
\Cref{lem:mod_decay} is the reason the modified-certificate decay is a conclusion rather than a hypothesis. The multiplicative structure survives discretization because the modified Hamiltonian is itself a contact Hamiltonian. The only obstruction is the sign of the reference $H^\star$. The normalization $f^\star = 0$ deserves one caveat. For damping atoms with $\nabla_x D = 0$ (constant damping $\gamma s$), adding a constant to $f$ changes only the decoupled $s$-history, so the normalization is innocuous. For state-dependent damping $\beta(x)s$, a constant shift of $f$ alters $s(t)$ and feeds back into $\dot p$ through the contact correction $s\,\nabla\beta$, so $f^\star \ge 0$ (equivalently, knowledge of a lower reference for $f$) is a genuine hypothesis there, mirroring the reference value $f_{\mathrm{ref}}$ used by the practical schedules of \cref{app:algorithms}.
\end{remark}

\begin{remark}[Scope of the exactness]\label{rem:exact_scope}
Both hypotheses of \cref{prop:exact_conformal} are essential, and they fail in different ways. \emph{(a) Exact sub-steps.} The mechanism is multiplicativity of the pullback identity $(\Phi^\tau_{H_i})^*\alpha = \exp(-\int_0^\tau \partial_s H_i\,dt)\,\alpha$, which is available only for exact contact flows. When a sub-step is merely approximately contact, what is lost is not the accuracy of the factor but the object itself. For the re-evaluated Contact-Newton preconditioner, the pullback computed in \cref{app:catalogue} acquires an $O(\tau)$ term along $dx_j$ that is not proportional to $\alpha$, so the sub-step is not conformal for \emph{any} factor and there is nothing for multiplicativity to propagate. This is why that variant is identified as framework-motivated in \cref{tab:classical_algos}. \emph{(b) Reeb isolation.} Given exact sub-steps, multiplicativity always yields an exact cumulative factor $\exp(-\sum_i \int \partial_s H_i\,dt)$. The role of the frozen-position hypothesis is that each integrand is constant along its own sub-flow, which collapses the exponent to the sampled sum $\sum_j \beta(x_j)\tau_j$ of the nominal rate. An exact atom whose conformal rate evolves along its own flow retains the exactness but not the sampled form. For the nonlinear damping~\eqref{eq:D_nonlinear}, $\partial_s D = \gamma s$ varies during the sub-step, and the exact factor is $(1+\tfrac{\gamma}{2}s\tau)^{-2}$, the momentum multiplier in~\eqref{eq:D_nonlinear}. This is a closed-form but trajectory-weighted quadrature rather than a sample of the nominal rate. Exact conformal bookkeeping is therefore a property of splittings that isolate the Reeb derivative $R(H) = \partial_s H$ (\cref{app:contact_dynamics}) into exactly integrated, frozen-coefficient atoms, not of splitting methods per se.
\end{remark}

\section{Lyapunov/Modified-Energy Constructions}
\label{app:lyapunov}

We record the conformal Lyapunov functionals invoked in \cref{sec:main,sec:closed_form}, and prove \cref{lem:bregman_lyap}. Let $f^\star = \min_x f$ and $x^\star \in \arg\min f$.

\paragraph{Heavy ball ($D = \gamma s$, $f$ $\mu$-strongly convex).}
For the quadratic heavy-ball regime of \cref{sec:quadratic}, the bare mechanical energy $\mcE_0(x,p) = \tfrac12\|p\|^2 + (f(x) - f^\star)$ satisfies $\dot\mcE_0 = -\gamma\|p\|^2$, which is dissipative but not immediately comparable to $\mcE_0$ itself. To upgrade this to a sharp objective-level exponential certificate, one uses the usual Bregman-style quadratic adjustment
\[
    \mcE = \tfrac12\|p + \sigma(x - x^\star)\|^2 + \tau(f - f^\star),
\]
which is equivalent to $\|p\|^2 + \|x - x^\star\|^2$ on $\mu$-strongly convex quadratics for suitable positive weights~\cite{polyak1964heavyball}. At the extremal energy rate $\gamma$, however, a single scalar $\tau$ works only for a single eigenvalue: mode $\lambda_i$ forces $\sigma=\gamma/2$ and $\tau=1-\gamma^2/(4\lambda_i)$. For a general positive-definite quadratic the corresponding matrix-weighted certificate is
\[
    \mcE_\gamma(x,p)=\tfrac12\left\|p+\tfrac\gamma2(x-x^\star)\right\|^2
    +\tfrac12(x-x^\star)^T\left(A-\tfrac{\gamma^2}{4}I\right)(x-x^\star),
    \qquad \dot\mcE_\gamma=-\gamma\mcE_\gamma,
\]
which is positive definite when $\gamma<2\sqrt\mu$. Alternatively, the direct modal computation of \cref{app:quadratic_details} certifies the Hamiltonian certificate $\mcE = H - f^\star$ itself, with the explicit comparison constant~\eqref{eq:Ccmp}.

\paragraph{Nesterov polynomial regime ($\beta(t) = r/t$, $r \ge 3$, $f$ convex).}
After the autonomous lift of \cref{prop:lift}, the conformal rate is $\lambda(t) = r/t$ and the \emph{Hamiltonian} envelope is polynomial, $\Rate_H(t) = (t_0/t)^{r}$. The \emph{objective-level} certificate is supplied by the Su--Boyd--Cand\`es functional~\cite{su2016differential}
\[
    \mcE_{\mathrm{SBC}}(x,p,t) = t^2\bigl(f(x)-f^\star\bigr) + \tfrac{(r-1)^2}{2}\,\bigl\|x - x^\star + \tfrac{t}{r-1}\,p\bigr\|^2,
\]
which satisfies $\dot\mcE_{\mathrm{SBC}} \le 0$ for $r \ge 3$, hence $f(x(t))-f^\star = O(1/t^{2})$. The two envelopes should not be conflated. The contact identity delivers the $t^{-r}$ decay of the (lifted) Hamiltonian, while the certified objective rate through $\mcE_{\mathrm{SBC}}$ is $O(1/t^2)$, and faster objective rates require additional assumptions. If the positivity and comparison conditions of \cref{ass:cert} are verified for the chosen lifted initial data and compact region, \cref{thm:rate} transfers the lifted Hamiltonian envelope on $[t_0,t_0+T]$. A discrete transfer of the SBC objective certificate instead requires the auxiliary-shadowing route applied to this explicitly time-weighted functional, and yields a finite-horizon additive shadowing defect rather than a direct consequence of the conformal theorem.

\paragraph{Strongly convex ($D = 2\sqrt\mu\,s$, $f$ $\mu$-strongly convex).}
With $\lambda = 2\sqrt\mu$, a Wibisono--Wilson--Jordan-type functional~\cite{wibisono2016variational,wilson2021lyapunov}
\[
    \mcE_{\mathrm{WWJ}}(x,p) = f(x) - f^\star + \tfrac12\|p + \sqrt\mu\,(x - x^\star)\|^2
\]
satisfies $\dot\mcE_{\mathrm{WWJ}} \le -\sqrt\mu\,\mcE_{\mathrm{WWJ}}$ for general $\mu$-strongly convex, $L$-smooth $f$, certifying the rate $e^{-\sqrt\mu\,t}$. On quadratics, the sharper modal analysis of \cref{app:quadratic_details} shows the full Hamiltonian envelope rate $\gamma$ transfers to the objective with the explicit constant~\eqref{eq:Ccmp} for any $\gamma < 2\sqrt\mu$, approaching the $2\sqrt\mu$ envelope at the cost of a blowing-up constant.

\paragraph{State-dependent damping ($D = \beta(x)\,s$): proof of \cref{lem:bregman_lyap}.}
The Hamiltonian envelope is monotone whenever $\beta(x(t)) \ge 0$ along the trajectory. To upgrade this into the objective-level comparison hypothesis~\eqref{eq:objective_comparison} used in \cref{cor:state_dep}, the Polyak heavy-ball Lyapunov adjustment must be extended by an $s^2$ term to absorb the coupling $-s\,\nabla\beta(x)$ that the state-dependence introduces in the $\dot p$ equation. The $s^2$ term has two opposing effects. It supplies the dissipation $-\rho\beta s^2$ that absorbs the cross-term, and it also injects the indefinite forcing $\rho s(\tfrac12\|p\|^2 - f)$, which \emph{grows} with $\rho$. Thus $\rho$ is constrained from both sides, and the windows~\eqref{eq:lyap_windows} are exactly what makes the two constraints compatible.

\begin{proof}[Proof of \cref{lem:bregman_lyap}]
Write $y := x - x^\star$, $w := p + \sigma y$, $B := \|\nabla\beta\|_{L^\infty(U)}$, and $g := \beta(x) - \sigma$. Since $\sigma = \beta_{\min}/2$ and $\beta \ge \beta_{\min}$, we have $\beta_{\min}/2 \le g \le \beta_{\max}$. The contact equations of motion for $H$ read $\dot y = p$, $\dot p = -\nabla f - s\,\nabla\beta - \beta p$, $\dot s = \tfrac12\|p\|^2 - f - \beta s$. Differentiating $\tE = \tfrac12\|w\|^2 + f + \tfrac\rho2 s^2$ along the flow and using $-w\cdot\nabla f + \nabla f\cdot p = -\sigma\, y\cdot\nabla f$,
\begin{equation}\label{eq:lyap_master_id}
    \frac{\dx}{\dx t}\,\tE
    \;=\; -\,\sigma\, y\cdot\nabla f \;-\; g\, w\cdot p \;-\; s\, w\cdot\nabla\beta \;+\; \rho s\bigl(\tfrac12\|p\|^2 - f\bigr) \;-\; \rho\beta s^2 .
\end{equation}
We bound the five terms. (1)~Strong convexity gives $y\cdot\nabla f \ge f + \tfrac\mu2\|y\|^2$, so $-\sigma y\cdot\nabla f \le -\sigma f - \tfrac{\sigma\mu}{2}\|y\|^2$. (2)~From $p = w - \sigma y$ and AM--GM, $-g\,w\cdot p \le -\tfrac{g}{2}\|w\|^2 + \tfrac{g\sigma^2}{2}\|y\|^2$. (3)~$|s\,w\cdot\nabla\beta| \le \tfrac{B}{2}\|w\|^2 + \tfrac{B}{2}s^2$. (4)~$\tfrac12\|p\|^2 \le \|w\|^2 + \sigma^2\|y\|^2$ and $|s|\le S$ give $|\rho s(\tfrac12\|p\|^2 - f)| \le \rho S\bigl(\|w\|^2 + \sigma^2\|y\|^2 + f\bigr)$. (5)~$-\rho\beta s^2 \le -\rho\beta_{\min}s^2$. Collecting coefficients against the target $-\tfrac{\beta_{\min}}{4}\tE = -\tfrac{\beta_{\min}}{4}\bigl(\tfrac12\|w\|^2 + f + \tfrac\rho2 s^2\bigr)$, it suffices that
\begin{align*}
    \|w\|^2:&\quad -\tfrac{g}{2} + \tfrac{B}{2} + \rho S \;\le\; -\tfrac{\beta_{\min}}{8},
    & f:&\quad -\sigma + \rho S \;\le\; -\tfrac{\beta_{\min}}{4},\\
    s^2:&\quad -\rho\beta_{\min} + \tfrac{B}{2} \;\le\; -\tfrac{\beta_{\min}}{8}\,\rho,
    & \|y\|^2:&\quad -\tfrac{\sigma\mu}{2} + \tfrac{g\sigma^2}{2} + \rho S\sigma^2 \;\le\; 0,
\end{align*}
where the $\|y\|^2$ row need only be nonpositive since $\tE$ contains no explicit $\|y\|^2$ term. With $\sigma = \beta_{\min}/2$ and $\rho = \tfrac23 B/\beta_{\min}$: (H2) gives $\tfrac{B}{2} \le \tfrac{\beta_{\min}}{16}$ and (H3) gives $\rho S \le \tfrac23\cdot\tfrac{3}{32}\beta_{\min} = \tfrac{\beta_{\min}}{16}$, so the $\|w\|^2$ row is $\le -\tfrac{\beta_{\min}}{4} + \tfrac{\beta_{\min}}{16} + \tfrac{\beta_{\min}}{16} = -\tfrac{\beta_{\min}}{8}$ and the $f$ row is $\le -\tfrac{\beta_{\min}}{2} + \tfrac{\beta_{\min}}{16} \le -\tfrac{\beta_{\min}}{4}$. The $s^2$ row reads $\tfrac78\rho\beta_{\min} \ge \tfrac{B}{2}$, satisfied since $\tfrac78\cdot\tfrac23 = \tfrac{7}{12} > \tfrac12$. For the $\|y\|^2$ row, $g \le \beta_{\max}$ and $\rho S \le \beta_{\min}/16 \le \beta_{\max}/16$ give $\tfrac{g\sigma}{2} + \rho S\sigma \le \tfrac{9}{16}\sigma\beta_{\max} = \tfrac{9}{32}\beta_{\min}\beta_{\max} \le \tfrac{27}{64}\mu < \tfrac\mu2$ by (H1). This proves (ii). For (i), each summand of $\tE$ is nonnegative, and $\tE = 0$ forces $w = 0$ and $f = 0$, hence $y = 0$ by strong convexity and then $p = w - \sigma y = 0$, with $s = 0$ additionally forced when $\rho > 0$. (iii) is immediate since $f \le \tE$. When $\rho = 0$ (constant damping) the rows involving $\rho$ and $B$ vanish and the same bookkeeping returns the classical Polyak certificate \cite{polyak1964heavyball,wibisono2016variational}.
\end{proof}

\Cref{lem:bregman_lyap} supplies an explicit objective-level certificate for the state-dependent damping family. Being of non-Hamiltonian form, it transfers to the discrete iterates through the shadowing corollary (\cref{cor:aux_transfer}) under the stated strong-convexity, damping-window, and smallness assumptions on $\beta$.

\section{Quadratic Contact-Certificate Benchmark: Derivations}
\label{app:quadratic_details}

This appendix contains the derivations behind \cref{prop:quadratic} and \cref{sec:quadratic}.

\paragraph{Modal reduction and explicit solution.}
With $A = Q\Lambda Q^T$, $y = Q^Tx$, $q = Q^Tp$, the flow separates into scalar damped oscillators $\ddot y_i + \gamma\dot y_i + \lambda_i y_i = 0$. For $\gamma < 2\sqrt{\lambda_i}$ set $\omega_i := \sqrt{\lambda_i - \gamma^2/4}$; with $p_0 = 0$,
\begin{equation}\label{eq:modal_solution}
    y_i(t) = e^{-\gamma t/2}\,y_{0,i}\,g_i(t),
    \qquad
    g_i(t) := \cos\omega_i t + \frac{\gamma}{2\omega_i}\sin\omega_i t.
\end{equation}

\paragraph{Certificate and comparison constant.}
With $p_0=0$, $s_0=0$ we have $H(0) = f(x_0)$ and, by the identity $\dot H = -\gamma H$, $\mcE(t) = H(t) = e^{-\gamma t} f(x_0) \ge 0$, so \cref{ass:cert}(i)--(ii) hold, with equality in the induced decay~\eqref{eq:E_decay_assumption}. For the comparison, harmonic addition gives $\sup_t g_i(t)^2 = 1 + \gamma^2/(4\omega_i^2) = 4\lambda_i/(4\lambda_i - \gamma^2)$, so
\begin{equation}\label{eq:Ccmp}
    f(x(t)) = \sum_i \tfrac12\lambda_i y_i(t)^2
    \;\le\; e^{-\gamma t}\sum_i \tfrac12 \lambda_i y_{0,i}^2\cdot\frac{4\lambda_i}{4\lambda_i-\gamma^2}
    \;\le\; \frac{4\mu}{4\mu-\gamma^2}\, e^{-\gamma t} f(x_0)
    \;=\; C_{\mathrm{cmp}}\,\mcE(t),
\end{equation}
using that $\lambda \mapsto 4\lambda/(4\lambda-\gamma^2)$ is decreasing, so the maximum is attained at the actual smallest eigenvalue $\mu$. Initial data in that eigenspace attain the constant, hence it is sharp. Since $g_i^2$ exceeds $1$ on a set of positive measure whenever $\gamma>0$, $K+D = \mcE - (V - f^\star)$ is transiently negative. The pointwise sufficient condition $K+D\ge0$ fails while the comparison holds. For an overdamped mode ($\gamma > 2\sqrt{\lambda_i}$, $\eta_i := \sqrt{\gamma^2/4 - \lambda_i}$), the slow branch decays like $e^{-(\gamma/2-\eta_i)t}$, so $(V-f^\star)/\mcE \sim e^{2\eta_i t} \to \infty$ and no horizon-uniform constant exists. At critical damping the secular factor $t^2 e^{-\gamma t}$ likewise escapes any constant. This proves \cref{prop:quadratic}(i). For any fixed finite horizon and nontrivial initial datum, continuity and the positive lower bound $\mcE(t)\ge e^{-\gamma T}\mcE(0)$ extend the comparison to a sufficiently small compact tube around the trajectory, with constant $C_{\mathrm{cmp}}+\varepsilon$. Thus the neighborhood form of \cref{ass:cert} used for discrete transfer follows for sufficiently small $h$, but its sharp constant is the continuous one above. The action variable is recovered algebraically from the envelope, $s(t) = \gamma^{-1}(e^{-\gamma t}H(0) - \tfrac12\|p(t)\|^2 - f(x(t)))$, which is~\eqref{eq:s_closed_form}.

\paragraph{The discrete Strang mode map.}
Per mode, the sub-flows act on $(y_i, q_i)$ linearly: $\Phi_K^h = \begin{psmallmatrix}1 & h\\ 0 & 1\end{psmallmatrix}$, $\Phi_V^{h/2} = \begin{psmallmatrix}1 & 0\\ -v_i & 1\end{psmallmatrix}$ with $v_i = \lambda_i h/2$, and $\Phi_D^{h/2} = \mathrm{diag}(1, \delta)$ with $\delta = e^{-\gamma h/2}$. The Strang composition~\eqref{eq:strang} is the matrix product $M_i(h) = \Phi_D^{h/2}\Phi_V^{h/2}\Phi_K^{h}\Phi_V^{h/2}\Phi_D^{h/2}$. Its projected $(y_i,q_i)$ dynamics are the kick--drift--kick adjoint of the dissipative drift--kick--drift leapfrog analyzed in~\cite{francca2020conformal,francca2021dissipative}, and the two orderings have the same characteristic polynomial. We include the calculation to connect that known spectrum to the augmented contact certificate. The inner Verlet block is
\[
    C_i := \begin{pmatrix}1 & 0\\ -v_i & 1\end{pmatrix}
    \begin{pmatrix}1 & h\\ 0 & 1\end{pmatrix}
    \begin{pmatrix}1 & 0\\ -v_i & 1\end{pmatrix}
    = \begin{pmatrix} 1 - hv_i & h \\ -v_i(2-hv_i) & 1 - hv_i\end{pmatrix},
    \qquad \det C_i = 1,
\]
and sandwiching with the damping factors gives
\[
\begin{aligned}
    M_i(h) &= \begin{pmatrix} 1 - h v_i & \delta h \\ -\delta v_i(2-hv_i) & \delta^2(1-hv_i)\end{pmatrix},\\
    \det M_i(h) &= \delta^2 = e^{-\gamma h},
    &\operatorname{tr} M_i(h) &= (1-hv_i)(1+\delta^2).
\end{aligned}
\]
The determinant identity holds because the Verlet block is symplectic and each damping half-step scales the momentum by exactly $\delta$. This is \cref{prop:exact_conformal} in matrix form. The eigenvalues are complex iff $\operatorname{tr}^2 < 4\det$, i.e.
\[
    \bigl(1 - \tfrac{\lambda_i h^2}{2}\bigr)^2 (1+\delta^2)^2 < 4\delta^2
    \iff
    \Bigl|1 - \frac{\lambda_i h^2}{2}\Bigr| < \frac{2\delta}{1+\delta^2} = \operatorname{sech}\!\Bigl(\frac{\gamma h}{2}\Bigr),
\]
and in that window $|\mathrm{eig}\,M_i| = \sqrt{\det M_i} = e^{-\gamma h/2}$ exactly. At $\gamma = 0$ the window is the St\"ormer--Verlet stability interval $0 < \lambda_i h^2 < 4$~\cite{hairer2006geometric}. Outside the window the two eigenvalues are real and distinct, so they no longer share the common modulus $e^{-\gamma h/2}$ but separate into a fast and a slow branch, with the product of their moduli still fixed at $\det M_i = e^{-\gamma h}$. For the phase, $\cos\theta_i(h) = \operatorname{tr}/(2\sqrt{\det}) = (1-\lambda_i h^2/2)\cosh(\gamma h/2) = 1 - (\tfrac{\lambda_i}{2} - \tfrac{\gamma^2}{8})h^2 + O(h^4)$, which agrees with $\cos(\omega_i h)$ through $O(h^2)$, giving $\theta_i(h) = \omega_i h + O(h^3)$. Exact modulus does not imply exact amplitude for physical initial data: for $q_{0,i}=0$,
\[
    y_{n,i}=e^{-\gamma nh/2}y_{0,i}\left(\cos(n\theta_i)+
    \frac{(1-\lambda_i h^2/2)\sinh(\gamma h/2)}{\sin\theta_i}\sin(n\theta_i)\right),
\]
whose sine coefficient generally differs from the exact $\gamma/(2\omega_i)$. Thus the scheme has an exact spectral envelope but retains phase and modal-shape errors. This proves \cref{prop:quadratic}(ii). \Cref{thm:rate} then guarantees, and \cref{fig:quad_envelope} confirms, that the full Hamiltonian certificate along the iterates (including the $\gamma s_n$ term) tracks the envelope $e^{-\gamma nh}$ with a bounded $O(h^2)$ relative ripple over the horizon.

\section{Autonomous Lift of Time-Dependent Damping}
\label{app:lift}

This appendix records the autonomous lift invoked in \cref{sec:recovery} for the time-dependent damping models of \cref{tab:classical_algos_td}.

\begin{prop}[Autonomous lift of time-dependent damping]\label{prop:lift}
Let $\beta \in C^k([t_0,\infty))$ with $t_0>0$, and consider the nonautonomous dynamics $\ddot x + \beta(t)\dot x + \nabla f(x) = 0$. On $J^1(\R^{n+1})$ with base coordinates $(x,\theta)$, conjugate momenta $(p,\pi)$, contact variable $s$, and contact form $\alpha = ds - p^T dx - \pi\, d\theta$, define the lifted contact Hamiltonian
\begin{equation}\label{eq:lifted_H}
    \bar H(x,\theta,p,\pi,s) \;=\; \tfrac12\|p\|^2 + f(x) + \pi + \beta(\theta)\,s.
\end{equation}
Then along the contact flow of $\bar H$: (i) $\theta(t) = \theta_0 + t$, so choosing $\theta_0 = t_0$ identifies $\theta$ with time; (ii) the $(x,p)$ dynamics reproduce the nonautonomous system; (iii) the conformal rate is $\partial_s \bar H = \beta(\theta)$, i.e.\ the time-dependent damping law. Moreover $\bar H$ is again of master form: $\tfrac12\|p\|^2$ and $f(x)$ are strict atoms, and $\pi + \beta(\theta)s$ is affine in the momenta, hence a prolonged atom, so the geometric splitting framework of \cref{sec:subflows} applies on compact subsets of $\{\theta \ge t_0\}$. The certificate hypotheses of \cref{ass:cert} must still be verified for the lifted Hamiltonian. For the Nesterov choice $\beta(t) = r/t$, the singularity at $t=0$ is excluded by the restriction $\theta \ge t_0 > 0$, and all compactness hypotheses of \cref{sec:main} are imposed on $[t_0, t_0+T]$.
\end{prop}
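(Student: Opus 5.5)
The plan is to write out the contact equations~\eqref{eq:contact_eom} for the lifted Hamiltonian~\eqref{eq:lifted_H} on $J^1(\R^{n+1})$, treating $(x,\theta)$ as base coordinates and $(p,\pi)$ as their conjugate momenta. The only $s$-dependence of $\bar H$ is through $\beta(\theta)s$, so $\partial_s\bar H=\beta(\theta)$. This gives item (iii) immediately.

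For item (i), use $\nabla_\pi\bar H=1$. Hence $\dot\theta=1$ and $\theta(t)=\theta_0+t$, and choosing $\theta_0=t_0$ identifies $\theta$ with time.

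For item (ii), read off the $(x,p)$ equations:
\[
\dot x=\nabla_p\bar H=p,\qquad \dot p=-\nabla_x\bar H-p\,\partial_s\bar H=-\nabla f(x)-\beta(\theta)p .
\]
Here $\nabla_x\bar H=\nabla f$ because $\beta$ depends on $\theta$ and not on $x$. Differentiating $\dot x=p$ and substituting $\theta=t$ gives $\ddot x+\beta(t)\dot x+\nabla f(x)=0$.

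The remaining equations need no control for this statement. They are
\[
\dot\pi=-\beta'(\theta)s-\pi\beta(\theta),\qquad \dot s=\|p\|^2+\pi-\bar H .
\]
These equations show that $\pi$ and $s$ are coupled, but neither feeds back into $(x,p,\theta)$. This is worth noting explicitly, since it is why positivity is not automatic, as the statement warns.

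For the master-form claim, classify the atoms. The terms $\tfrac12\|p\|^2$ and $f(x)$ are independent of $s$, hence strict. The term $\pi+\beta(\theta)s$ has the form $a(x,\theta,s)+b^i p_i+b^\theta\pi$ with $a=\beta(\theta)s$, $b=0$ and $b^\theta=1$, so it is affine in the momenta and therefore prolonged. Its flow should be obtained in closed form, by analogy with~\eqref{eq:D_statedep} and~\eqref{eq:D_constant}, as follows:
\begin{itemize}
\item $\theta\mapsto\theta+\tau$;
\item $s\mapsto s\exp(-\int_0^\tau\beta(\theta+u)\,du)$;
\item $x$ fixed;
\item $p\mapsto p\exp(-\int_0^\tau\beta)$;
\item $\pi$ obtained by solving its linear ODE with variation of constants.
\end{itemize}
Checking these formulas against the sub-flow equations is routine. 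The splitting framework then applies on compact sets where $\beta\in C^k$, that is, on compact subsets of $\{\theta\ge t_0\}$.

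The main point needing care is the domain restriction. The lift is only valid where $\beta$ is smooth, which is $\theta\ge t_0>0$ in the Nesterov case. Since $\dot\theta=1>0$, forward trajectories started at $\theta_0=t_0$ stay in $\{\theta\ge t_0\}$. The Yoshida compositions~\eqref{eq:yoshida} contain negative sub-steps, so the numerical iterates could in principle step backward in $\theta$. I would handle this by requiring the compact region to lie in $\{\theta\ge t_0\}$ with some margin, by assuming that margin as part of \cref{ass:reg}, or by taking $h$ small enough that the signed backward excursions stay inside the margin. The final sentence of the statement, that \cref{ass:cert} must be checked separately, is a scope remark rather than a claim to prove. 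I would justify it by pointing to the free variable $\pi$ in $\bar H$, which is unbounded in sign.
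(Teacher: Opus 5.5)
Your proposal is correct and follows the paper's proof essentially step for step. Like the paper, you write out the lifted contact equations and read off $\dot\theta=1$, the reproduced $(x,p)$ dynamics and $\partial_s\bar H=\beta(\theta)$; you classify the atoms as strict and prolonged, give the same closed-form prolonged sub-flow (translation of $\theta$, integrated rescaling of $p$ and $s$, variation of constants for $\pi$), and attribute the failure of automatic positivity to the free momentum $\pi$. Your extra observation that the negative middle stage of the Yoshida composition~\eqref{eq:yoshida} can push $\theta$ below $t_0$ is a sensible refinement, which the paper handles only implicitly through the neighborhood-regularity and containment requirements of \cref{ass:reg}.
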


\begin{proof}
On $J^1(\R^{n+1})$ with $\alpha = ds - p^Tdx - \pi\,d\theta$, the contact equations~\eqref{eq:contact_eom} for $\bar H = \tfrac12\|p\|^2 + f(x) + \pi + \beta(\theta)s$ read
\[
    \dot x = \nabla_p\bar H = p, \qquad
    \dot\theta = \partial_\pi \bar H = 1, \qquad
    \dot p = -\nabla_x \bar H - p\,\partial_s\bar H = -\nabla f(x) - \beta(\theta)\,p,
\]
\[
    \dot\pi = -\partial_\theta \bar H - \pi\,\partial_s\bar H = -\beta'(\theta)\,s - \beta(\theta)\,\pi, \qquad
    \dot s = p^T\nabla_p\bar H + \pi\,\partial_\pi\bar H - \bar H = \|p\|^2 + \pi - \bar H.
\]
From $\dot\theta = 1$, $\theta(t) = \theta_0 + t$. Substituting into the $(x,p)$ equations reproduces $\ddot x + \beta(\theta_0+t)\dot x + \nabla f(x) = 0$, which is the nonautonomous dynamics with time origin $\theta_0$. The conformal rate is $\partial_s\bar H = \beta(\theta)$ as claimed, and the identity $\dot{\bar H} = -\bar H\,\partial_s\bar H$ holds verbatim for the lifted system. This supplies the lifted contact identity, but not the optimization certificate automatically. Because $\bar H$ contains the free momentum $\pi$, the quantity $\bar H-f^\star$ need not be nonnegative or control $f-f^\star$. Those parts of \cref{ass:cert} require a separate invariant-region or auxiliary-certificate argument. Structurally, $\tfrac12\|p\|^2$ and $f(x)$ are independent of $s$ (strict atoms), while $\pi + \beta(\theta)s$ is affine in the momenta $(p,\pi)$ with coefficients depending on the base coordinate $\theta$ and on $s$, hence a prolonged atom in the sense of~\cite{kevrekidis2026local}. Since $\theta$ \emph{moves} during this sub-flow ($\dot\theta = \partial_\pi = 1$), the exact sub-flow is \emph{not} a frozen-coefficient rescaling. It translates $\theta_0 \mapsto \theta_0 + \tau$ and rescales $p$ and $s$ by the integrated factor
\[
    \exp\Bigl(-\int_0^\tau \beta(\theta_0 + t)\,dt\Bigr)
    \qquad\Bigl(\text{for } \beta(t) = r/t:\ \bigl(\tfrac{\theta_0}{\theta_0+\tau}\bigr)^{r}\Bigr),
\]
with $\pi$ obtained by variation of constants from the linear equation $\dot\pi = -\beta'(\theta)s - \beta(\theta)\pi$ along the known $s(t)$. All steps are in closed form whenever $\int\beta$ is. For $\beta(t)=r/t$ the data are smooth (indeed analytic) on $\{\theta \ge t_0\}$ for any $t_0>0$, so \cref{ass:reg} can be satisfied on compact subsets away from the singularity, with horizon $[t_0, t_0+T]$.
\end{proof}

\section{List of Algorithms}
\label{app:algorithms}

We list the algorithms used in the experiments of \cref{sec:computation}. A ``C-" prefix denotes a contact structure-preserving discretization (Strang) of the continuous dynamics. Unlike \cref{tab:classical_algos}, which records abstract contact-Hamiltonian templates, the table below records the parameterization used for the reported experiments. For the objective-adaptive rows, write $\rho(x)$ for the clipped normalized objective gap $(f(x)-f_{\mathrm{ref}})/f_{\mathrm{scale}}$.

\begin{table}[t]
\centering
\footnotesize
\renewcommand{\arraystretch}{1.25}
\begin{tabular}{l l l >{\raggedright\arraybackslash}p{0.47\textwidth}}
\hline
Algorithm & $K(x,p)$ & $D(x,p,s)$ & Parameters\\
C-HB & $\frac{1}{2}\|p\|^2$ & $\beta s$ & $\tau$ and \texttt{beta\_tau} (the product $\beta\tau$) \\
C-NAG & $\frac{1}{2}\|p\|^2$ & $\beta_k s$ & $\tau$, $r$, and $k_0$, with $\beta_k = r/((k+k_0)\tau)$ \\
C-Adapt & $\frac{1}{2}\|p\|^2$ & $\beta(x) s$ & $\tau$, \texttt{beta\_floor}, \texttt{beta\_gain}, \texttt{f\_scale}, \texttt{f\_ref}, and \texttt{power}, with $\beta(x)=\beta_{\mathrm{floor}}+\beta_{\mathrm{gain}}\rho(x)^q$ \\
C-InvGrad & $\frac{1}{2}\|p\|^2$ & $\beta(x) s$ & $\tau$, \texttt{beta\_floor}, \texttt{beta\_gain}, \texttt{c}, \texttt{f\_scale}, \texttt{f\_ref}, and \texttt{power}, with $\beta(x)=\beta_{\mathrm{floor}}+\beta_{\mathrm{gain}}/\bigl(1+c\rho(x)^q\bigr)$ \\
C-Adam & $\frac{1}{2} p^\top\!\mathrm{diag}(\sqrt{\hat v}+\epsilon)^{-1}p$ & $\beta_{\mathrm{eff}}(x,p,k)s$ & $\tau$, \texttt{beta\_contact}, $\beta_2$, $\epsilon$, and \texttt{splitting}; scheduled damping adds \texttt{beta\_min}; adaptive damping uses floor/gain/power/inverse/clip parameters \\
C-SGD & $\frac{1}{2}\|p\|^2$ & $\beta_{\mathrm{eff}}(k)s$ & $\tau$, \texttt{beta\_contact}, and \texttt{splitting}; the Nesterov variant uses $\beta_k = 3/(k\tau+1)$ \\
\hline
\end{tabular}
\caption{Experiment-facing contact algorithm families used in the deterministic and deep-learning sections. The iteration-scheduled rows (C-NAG, scheduled C-SGD and C-Adam variants) are nonautonomous in the sense of \cref{tab:classical_algos_td}. The deterministic exact-gradient case is covered by the theory only through the lift of \cref{prop:lift}, while stochastic and adaptive-moment variants are reported as design-template stress tests rather than certified deployments.}
\label{tab:implemented_algos}
\end{table}

\paragraph{Hyperparameter Tuning.} For the deterministic benchmarks, we tune each method over problem-specific grids of its native step-size and damping or momentum parameters, first using shorter screening runs and then reranking the strongest candidates on longer horizons before the reported full-budget comparisons. For the deep-learning benchmarks, we use lightweight sweep runs to choose baseline learning rates and contact step-size or damping settings, and then evaluate the selected configurations in full-budget multi-seed runs, reporting aggregate statistics in the main text and appendix figures.

\paragraph{Reproducibility.} CIFAR-10 statistics aggregate three seeds. The deterministic benchmarks report full-budget runs at the tuned configurations, with sensitivity to step size, damping parameters, and initial conditions quantified separately in \cref{app:extended_numerics}. The reported deep-learning numbers should be read as aggregate performance summaries rather than uncertainty-resolved comparisons. The theory figures of \cref{sec:closed_form} are verified against direct numerical integration, with the fitted convergence orders checked against their predicted values.

\section{Extended Numerical Results}
\label{app:extended_numerics}

\paragraph{Contact Hamiltonian Ablation.} For the deterministic benchmarks of \cref{sec:computation}, we ablate the choice of Hamiltonian by comparing the performance of the main proposed algorithms to variants in which either the kinetic $K$ or dissipation $D$ term is removed. We compare to a baseline of a tuned Nesterov's accelerated gradient (NAG) method. The results are presented in \cref{fig:hamiltonian_ablations}. Overall, the full algorithm significantly outperforms the ablated variants, demonstrating the importance of \textbf{both} the kinetic and dissipation terms for achieving improved performance.

\begin{figure}
    \begin{subfigure}[t]{0.48\textwidth}
        \centering
        \includegraphics[width=\linewidth]{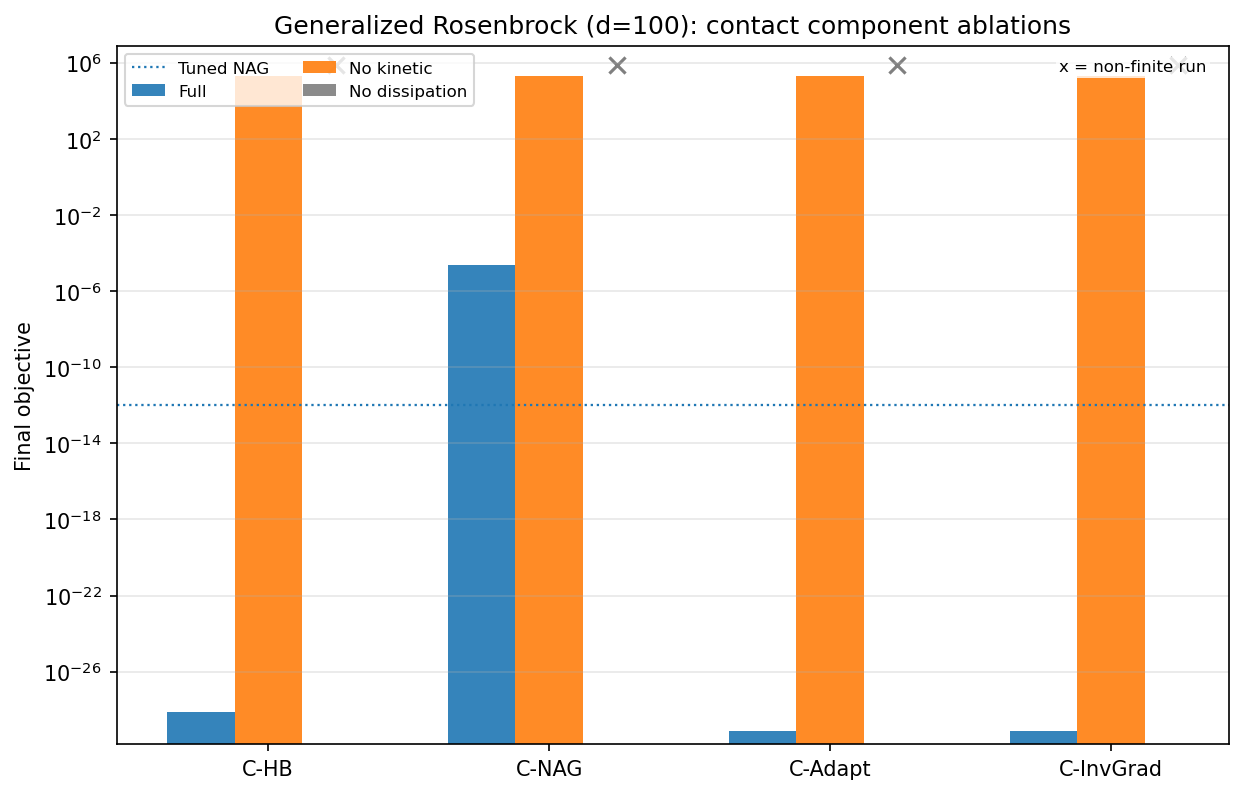}
        \caption{Rosenbrock Hamiltonian ablation.}
    \end{subfigure}\hfill
    \begin{subfigure}[t]{0.48\textwidth}
        \centering
        \includegraphics[width=\linewidth]{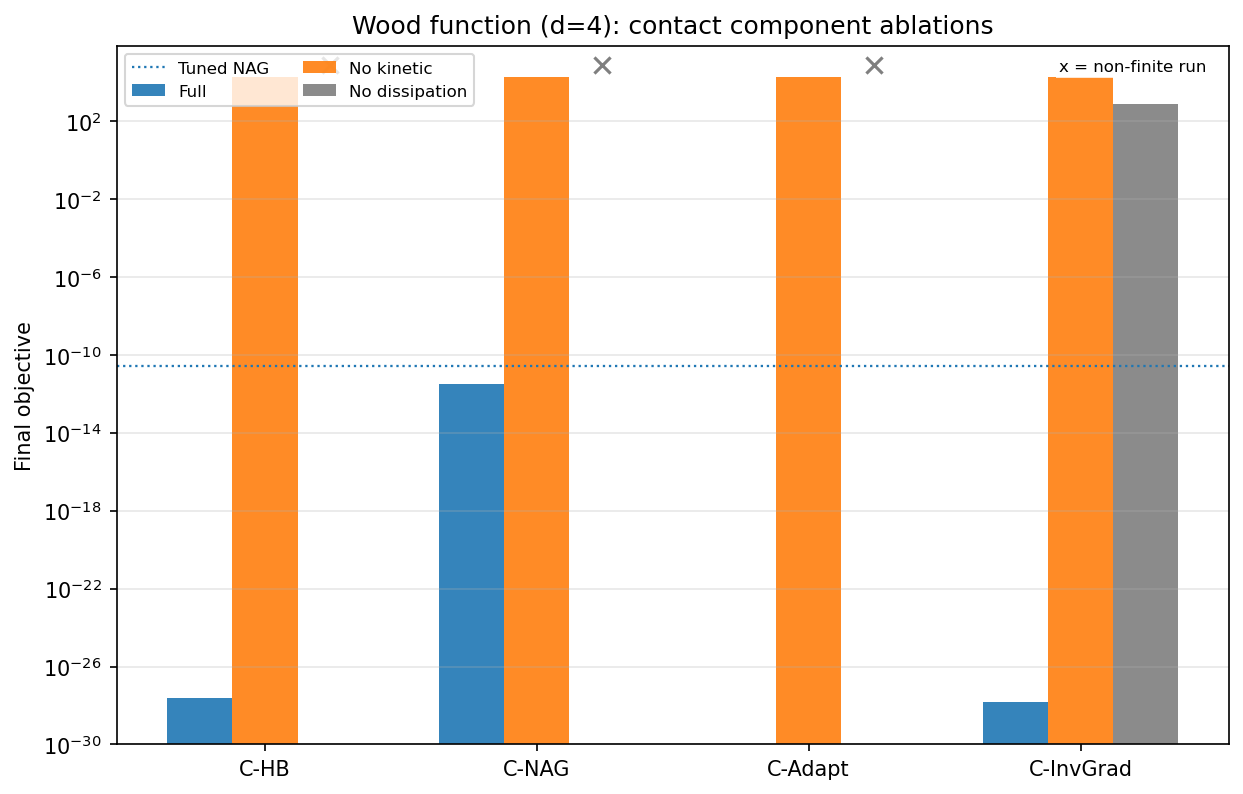}
        \caption{Wood Hamiltonian ablation.}
    \end{subfigure}
    \caption{Ablation runs for the deterministic benchmarks of \cref{sec:computation}. \textbf{Left:} Rosenbrock in $100$ dimensions. \textbf{Right:} Wood in $4$ dimensions. Blue columns represent the full proposed algorithm, while orange and gray represent variants where the kinetic or dissipation term is respectively removed.}
    \label{fig:hamiltonian_ablations}
\end{figure}

\paragraph{Robustness to Parameter Choice.} One of the benefits of geometric integration, and here specifically the contact Hamiltonian formalism, is that the structure-preservation properties of the integrator can lead to improved robustness to the choice of hyperparameters. To demonstrate this, we perform a robustness study in which we vary the main hyperparameters of the proposed contact algorithms (with the primary being the step size $\tau$ of the splitting) and observe the resulting performance on the deterministic benchmarks of \cref{sec:computation}. We find that the performance of the proposed contact algorithms is robust to a wide range of hyperparameter choices, remaining below the tuned-NAG baseline across a wide range of step sizes and other hyperparameters, demonstrating the potential benefits of the contact Hamiltonian formalism for designing optimization algorithms that are robust to hyperparameter choice.

\begin{figure}
    \begin{subfigure}[t]{0.99\textwidth}
        \centering
        \includegraphics[width=\linewidth]{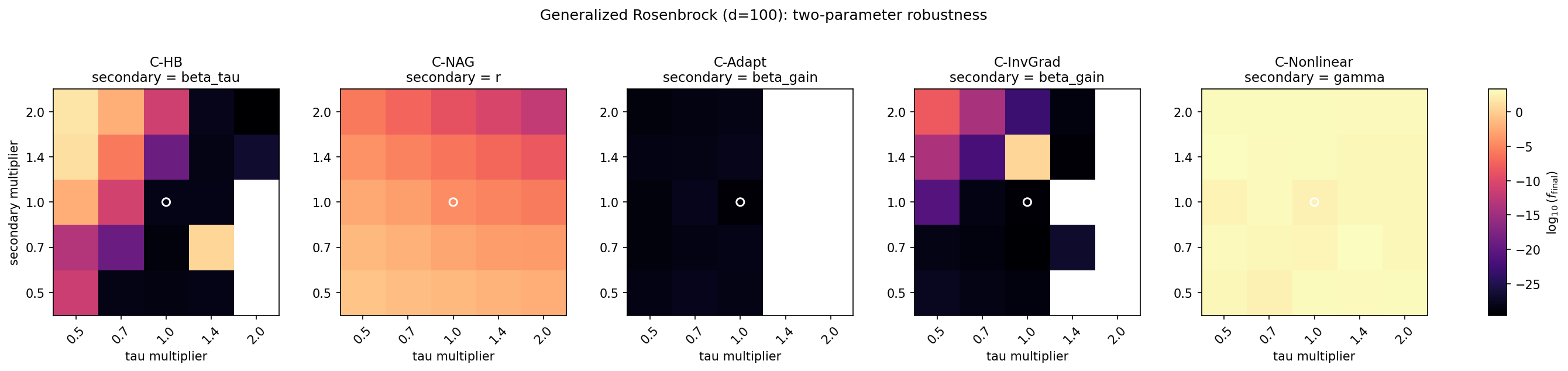}
        \caption{Rosenbrock robustness to parameter choice.}
    \end{subfigure}\hfill\\
    \begin{subfigure}[t]{0.99\textwidth}
        \centering
        \includegraphics[width=\linewidth]{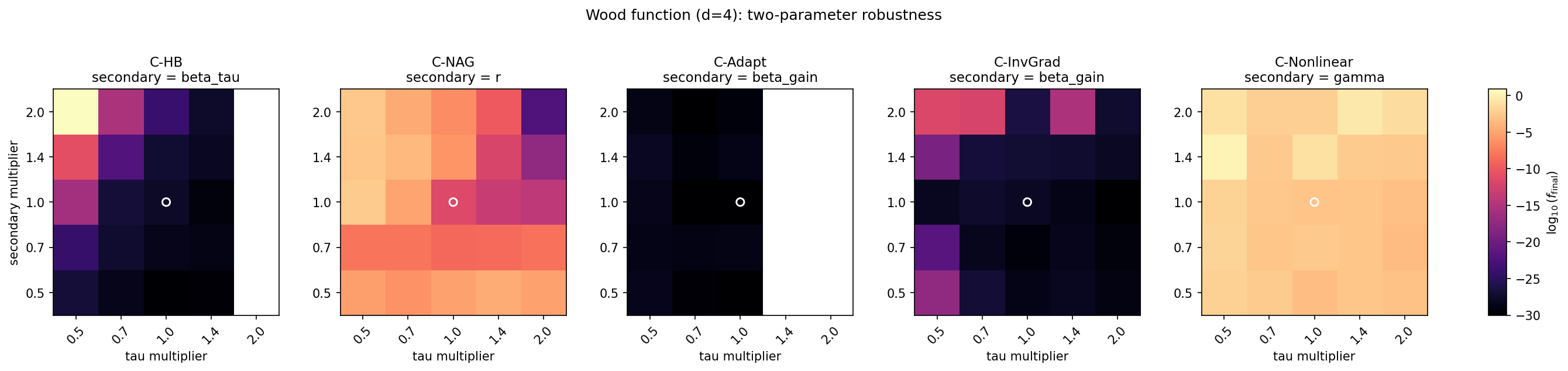}
        \caption{Wood robustness to parameter choice.}
    \end{subfigure}
    \caption{Robustness runs for the deterministic benchmarks of \cref{sec:computation}. \textbf{Top:} Rosenbrock in $100$ dimensions. \textbf{Bottom:} Wood in $4$ dimensions. Blue columns represent the full proposed algorithm, while orange and gray represent variants where the main hyperparameters are varied.}
    \label{fig:parameter_robustness}
\end{figure}

\paragraph{Robustness to Initial Conditions.} One of the advantages of state-dependent damping is that it can adapt to the local geometry of the objective function, which can lead to improved robustness to the choice of initial conditions, even if the method is tuned at a single deterministic initial condition. To demonstrate this, we perform a robustness study in which we vary the initial conditions over $100$ randomly chosen points in the $100$-dimensional Rosenbrock example, after tuning the algorithms at a single initial condition. In \cref{fig:initial_condition_robustness} we observe that the state-dependent damping algorithms remain competitive with the tuned NAG baseline across a wide range of initial conditions, obtaining better accuracy over the majority of the sampled points, with median error below $10^{-25}$ (contact algorithms) vs. approximately $10^{-12}$ (NAG), demonstrating the potential benefits of state-dependent damping for optimization. In contrast, the geometric relativistic gradient descent (RGD) method retains the edge over NAG only on a subset of the initial conditions, with significantly worse performance on the remaining initial conditions, demonstrating that the state-dependence of the damping is crucial for achieving improved robustness to initial condition choice.

\begin{figure}
    \centering
    \includegraphics[width=0.98\textwidth]{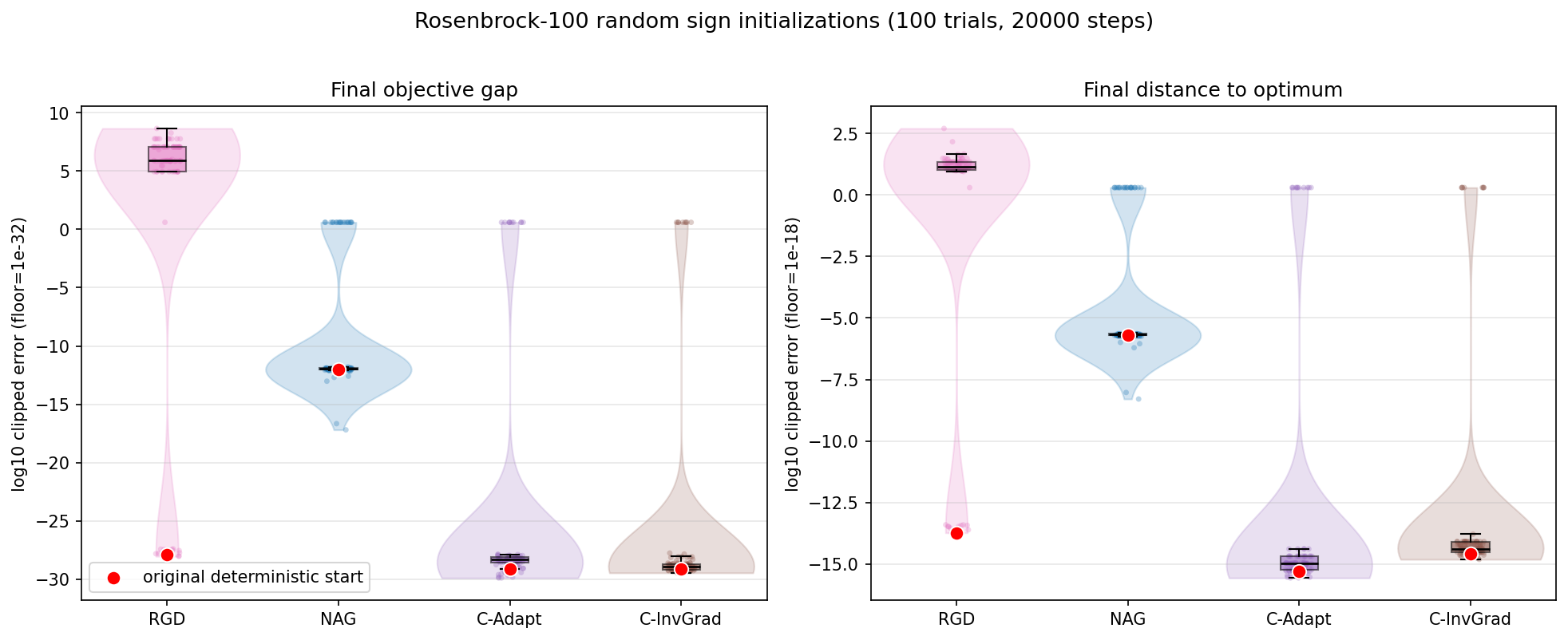}
    \caption{Rosenbrock robustness to initial condition choice. All algorithms are tuned at a single initial condition (red point), but contact algorithms remain competitive with the tuned NAG baseline across a wide range of initial conditions, demonstrating the potential benefits of state-dependent damping for optimization.}
    \label{fig:initial_condition_robustness}
\end{figure}

\end{document}

%% file: preamble.tex
\usepackage[margin=1.0in]{geometry}
\usepackage[numbers,square,comma,sort&compress]{natbib}

\usepackage{graphicx}
\usepackage{caption}
\usepackage{subcaption}
\usepackage{tikz}
\usepackage{tikz-cd}

\usepackage{physics}
\usepackage{amsmath}
\usepackage{amsthm}
\usepackage{amstext}
\usepackage{amssymb}
\usepackage{thmtools}
\usepackage{bm}

\newtheorem{theorem}{Theorem}[section]
\newtheorem{corollary}[theorem]{Corollary}
\newtheorem{lemma}[theorem]{Lemma}
\newtheorem{prop}[theorem]{Proposition}

\theoremstyle{definition}
\newtheorem{defn}{Definition}[section]
\newtheorem{exmp}{Example}[section]

\theoremstyle{definition}
\newtheorem{assumption}{Assumption}

\theoremstyle{remark}
\newtheorem{remark}[theorem]{Remark}

\usepackage[english]{babel}

\usepackage{pdfpages}

\usepackage{dsfont}
\usepackage{xcolor}
\definecolor{amber}{rgb}{1.0, 0.6, 0.0}
\usepackage{textcomp}
\usepackage{listings}
\usepackage{relsize}
\usepackage{multicol}
\usepackage{hyperref}
\hypersetup{
	colorlinks=true,
	linkcolor=black,
	filecolor=black,
	urlcolor=black,
	citecolor=blue,
}

\AtBeginDocument{%
}
\usepackage[capitalise]{cleveref}
\crefname{figure}{Fig.}{Fig.}
\crefname{table}{Table}{Tables}
\crefname{equation}{Eqn.}{Eqns.}
\crefname{algocf}{Algorithm}{Algorithms}
\crefname{exmp}{Example}{Ex.}

\crefname{lemma}{Lemma}{Lemmas}
\crefname{prop}{Proposition}{Propositions}
\crefname{corollary}{Corollary}{Cor.}
\crefname{defn}{Definition}{Definitions}
\crefname{assumption}{Assumption}{Assumptions}

\usepackage{url}

\usepackage{pifont}

\usepackage{bbm}
\usepackage{mathtools}
\usepackage{hhline}
\usepackage{tabularx}
\usepackage{booktabs}
\usepackage{nicefrac}
\usepackage{microtype}

\newcommand{\dx}{\mathrm{d}}

\newcommand{\R}{\mathbb{R}}

\newcommand{\mcM}{\mathcal{M}}

%% file: macros.tex
\newcommand{\mcE}{\mathcal{E}}

\newcommand{\tH}{\widetilde{H}}
\newcommand{\tlam}{\widetilde{\lambda}}
\newcommand{\tE}{\widetilde{\mcE}}
\newcommand{\Rate}{R}